\theoremstyle{plain}
\newtheorem{theorem}{Theorem}[section]
\newtheorem{lemma}[theorem]{Lemma}
\theoremstyle{definition}
\newtheorem{remark}[theorem]{Remark}
\newcommand{\E}[1]{\mathrm{E} \Big[ #1 \Big]}
\def\R{\mathbb{R}}
\def\N{\mathbb{N}}
\def\F{\mathcal{F}}
\title{SPDEs with fractional noise in space: continuity in law \\ with respect to the Hurst index}
\author[1]{Luca M. Giordano} 
\author[2]{Maria Jolis}
\author[3]{Llu\'is Quer-Sardanyons \thanks{Corresponding author}}
\affil[1]{Department of Mathematics, University of Milano,
Via C. Saldini 50, 20133 Milano, Italy, and
Departament de Matem\`atiques, Universitat
Aut\`{o}noma de Barcelona, 08193 Bellaterra, Catalonia,
Spain. E-mail address: luca.giordano@unimi.it} 
\affil[2,3]{Departament de Matem\`atiques, Universitat
Aut\`{o}noma de Barcelona, 08193 Bellaterra, Catalonia,
Spain. E-mail addresses: mjolis@mat.uab.cat, quer@mat.uab.cat.}
\date{\today}
\begin{document}

\maketitle

\abstract{
In this article, we consider the quasi-linear stochastic wave and heat equations on the real line and
with an additive Gaussian noise which is white in time and behaves in space like a fractional Brownian
motion with Hurst index $H\in (0,1)$. The drift term is assumed to be globally Lipschitz. We prove that the
solution of each of the above equations is continuous in terms of the index $H$, with respect to the
convergence in law in the space of continuous functions.
}

\bigskip

\noindent {MSC 2010:} 60B10; 60G60; 60H15


\medskip

\noindent {Keywords:} stochastic wave equation; stochastic heat equation; weak convergence; fractional noise


\section{Introduction}
\label{ch: introduction}

We consider the following stochastic wave and heat equations on
$[0,\infty)\times \mathbb{R}$, respectively:
\begin{equation}
\label{eq: wave} \tag{SWE}
\begin{cases}
\dfrac{\partial^2 u^H}{\partial t^2} (t,x)=\dfrac{\partial^2 u^H}{\partial x^2}(t,x)+b(u^H(t,x))+
\dot{W}^H(t,x),\\
u(0,x)=u_0(x), \quad x\in\R, \\
u_t(0,x)=v_0(x), \quad x\in\R, \\
\end{cases}
\end{equation}
and
\begin{equation}
\label{eq: heat}
\tag{SHE}
\begin{cases}
\dfrac{\partial u^H}{\partial t} (t,x)=\dfrac{\partial^2 u^H}{\partial x^2} (t,x)+b(u^H(t,x))+\dot{W}^H(t,x), \\
u(0,x)=u_0(x), \quad x\in\R. \\
\end{cases}
\end{equation}
The initial conditions $u_0,v_0:\R\rightarrow \R$ are deterministic
measurable functions which satisfy some regularity conditions
specified below. The drift coefficient $b:\R\rightarrow \R$ is
assumed to be globally Lipschitz.

The term $\dot{W}^H(t,x)$ stands for a random perturbation that is
supposed to be a Gaussian noise which is white in time and has a
spatially homogeneous correlation of fractional type. More
precisely, on some complete probability space
$(\Omega,\F,\mathbb{P})$, the noise $\dot{W}^H$ is defined by a
family of centered Gaussian random variables $\{W^H(\varphi), \,
\varphi \in \mathcal{D}\}$, where
$\mathcal{D}:=C^\infty_0([0,\infty)\times \mathbb{R})$ is the space
of infinitely differentiable functions with compact support, with
covariance functional
\begin{equation}
\E{W^H(\varphi)W^H(\psi)}=\int_{0}^{\infty} \int_{\R}
\F\varphi(t,\cdot)(\xi)\overline{\F\psi(t,\cdot)(\xi)}\,
\mu_H(d\xi)dt,
\end{equation}
for all $\varphi,\psi \in \mathcal{D}$, where $\F$ denotes the
Fourier transform in the space variable. For any $H\in (0,1)$, the
{\it{spectral}} measure $\mu_H$ is given by
\begin{equation}
\label{eq: spectral measure and c_H} \mu_H(d\xi):=c_H
|\xi|^{1-2H}d\xi, \qquad c_H=\frac{\Gamma(2H+1)\sin(\pi H)}{2\pi}.
\end{equation}

The above covariance relation, as in \cite{rev}, is used to construct an
inner product on the space $\mathcal{D}$ defined in the following
way:
$$ \left\langle \varphi, \psi \right\rangle_H:=  \E{W^H(\varphi)W^H(\psi)}, \quad \varphi, \psi\in
\mathcal{D}.$$
Let $\mathcal{H}^H$ be the completion of $\mathcal{D}$
with respect to the inner product
$\langle\cdot,\cdot\rangle_H$, which will be the natural
space of deterministic integrands with respect to $W^H$. Indeed, our
noise can be extended to a centered Gaussian family $\{W^H(g),\,
g\in \mathcal{H}^H\}$ indexed on the Hilbert space $\mathcal{H}^H$ and
satisfying
\[
\E{W^H(g_1)W^H(g_2)}=\langle g_1,g_2\rangle_H.
\]
As usual, for any $g\in \mathcal{H}^H$, we say that $W^H(g)$ is the
Wiener integral of $g$ and we denote it by
$$\int_{0}^{\infty}\int_{\R}g(t,x)W^H(dt,dx):=W^H(g).$$
The space $\mathcal{H}^H$ contains all functions $g$ such that its
Fourier transform in the space variable satisfies (see \cite[Thm.
2.7]{1/4<H<1/2} and \cite[Prop. 2.9]{rev}):
$$\int_0^{\infty} \int_{\R} |\F g(t,\cdot)(\xi)|^2|\xi|^{1-2H}d\xi dt<\infty.$$
In particular, the space $\mathcal{H}^H$ contains all elements of the
form $1_{[0,t]\times[0,x]}$, with $t>0$ and $x\in\R$. Then, the
following random field is naturally associated to our noise $W^H$:
$$X^H(t,x):=W^H\big(1_{[0,t]\times[0,x]}\big).$$
As a consequence of the representation in law of the fractional
Brownian motion as a Wiener type integral with respect to a complex
Brownian motion (see, for instance, \cite[p. 257]{PT}), we have that

\begin{equation*}
\begin{split}  \E{X^H(t,x)X^H(s,y)}
& = \int_{0}^{\infty}  \int_{\R} \F 1_{[0,t]\times[0,x]}(r,\cdot)(\xi)
\overline{\F 1_{[0,s]\times[0,y]}(r,\cdot)(\xi)}\, \mu_H(d\xi)dt \\
& =  \int_{0}^{t\wedge s} \int_{\R} \F 1_{[0,x]}(\xi)\overline{\F 1_{[0,y]}(\xi)}\, \mu_H(d\xi)dt \\
& = \frac{1}{2}(t\wedge s)\Big(|x|^{2H}+|y|^{2H}-|x-y|^{2H}\Big). \\
\end{split}
\end{equation*}
This is the covariance of a standard Brownian motion in the time
variable, while in the space variable we have obtained the
covariance of a fractional Brownian motion with Hurst parameter $H$.

We denote by $(\F^H_t)_{t\ge 0}$ the filtration generated by $W^H$,
namely
\begin{equation}
    \F^H_t:=\sigma( W^H(1_{[0,s]}\varphi),\, s\in[0,t],\,\varphi \in C^\infty_0(\R))\, \lor \, \mathcal{N},
    \label{eq:24}
    \end{equation}
    where $\mathcal N$ denotes the class of $\mathbb{P}$-null sets in $\mathcal F$.

The solution to equations \eqref{eq: wave} and \eqref{eq: heat} will
be interpreted in the {\it{mild}} sense. That is, for any $T>0$, we
say that an adapted and jointly measurable process
$u^H=\{u^H(t,x),\, (t,x)\in [0,T]\times \R\}$ solves \eqref{eq:
wave} (resp. \eqref{eq: heat}) if, for all $(t,x)\in[0,T]\times \R$,
it holds
    \begin{equation}
    \begin{split}
    u^H(t,x) & = I_0(t,x)+ \int_{0}^{t}\int_{\R} G_{t-s}(x-y) W^H(ds,dy)  \\
     & \qquad +  \int_{0}^{t}\int_\R G_{t-s}(x-y) b(u^H(s,y)) dy ds, \quad \mathbb{P}\text{-a.s.}
    \end{split}
    \label{def: mild solution}
    \end{equation}
Here, the function $G_{t}(x)$ is the fundamental solution of the
wave (resp. heat) equation in $\R$, and $I_0(t,x)$ is the solution
of the corresponding deterministic linear equation. These are given
by
\begin{equation}
I_0(t,x)=\begin{cases}
\frac{1}{2}\int_{x-t}^{x+t}v_0(y)dy+\frac{1}{2}\Big(u_0(x+t)-u_0(x-t)\Big), & \text{wave equation,} \\
\\
\int_{\R} G_t(x-y)u_0(y)dy, & \text{heat equation,} \\
\end{cases}
\label{eq:0}
\end{equation}
and
\begin{equation}
G_t(x)=\begin{cases}
\frac{1}{2}1_{|x|<t}(x), & \text{wave equation,} \\
\\
\dfrac{1}{(2\pi t)^{1/2}}\exp\Big(-\dfrac{|x|^2}{2t}
\Big), & \text{heat equation.} \\
\end{cases}
\label{eq:1}
\end{equation}

\medskip

Our main objective consists in studying the continuity in law, in
the space $C([0,T]\times \R)$ of continuous functions, of the
solution $u^H$ to equations \eqref{eq: wave} and \eqref{eq: heat}
with respect to the Hurst index $H\in (0,1)$. More precisely, we fix
$H_0\in (0,1)$ and we will provide sufficient conditions on the
initial data under which, whenever $H\rightarrow H_0$, the
$C([0,T]\times \R)$-valued random variable $u^H$ converges in law to
$u^{H_0}$ (cf. Theorem \ref{thm:9}). Recall that the parameter $H$
quantifies the regularity of the random perturbation $W^H$, and
hence the level of noise in the system. So we will study the
probabilistic behavior of the solution in terms of $H$, aiming at
showing that the sensitivity in $H$ implies the corresponding
convergence of the solutions.

We note that continuity in law with respect to fractionality
indices has been studied in other related contexts. We refer the
reader to \cite{JV4,JV3,JV2} for results involving symmetric, Wiener
and multiple integrals with respect to fractional Brownian motion,
respectively, while in \cite{JV1,Xiao} the convergence in law of the
local time of the fractional Brownian motion and of anisotropic
Gaussian random fields has been considered, respectively. Finally,
in the recent paper \cite{Ait}, the continuity in law for some
additive functionals of the sub-fractional Brownian motion has been
studied.

In order to tackle our main objective, we start by focusing on the
linear version of equations \eqref{eq: wave} and \eqref{eq: heat}.
That is, we consider the case where $b=0$. Here, we first prove
existence and uniqueness of solution, together with the existence of
a continuous modification, for any $H\in (0,1)$ (cf. Theorem
\ref{thm:1}). So, for the particular case of \eqref{eq: wave} and
\eqref{eq: heat}, this result puts together the more general ones of
\cite{BalanJFAA} (valid for $H\leq \frac12$) and \cite{extend}
(valid for $H>\frac12$). The convergence in law of $u^H$ to
$u^{H_0}$ reduces to analyze the convergence of the corresponding
stochastic convolutions, which are centered Gaussian processes. For
this, we first check that the corresponding family of probability laws
is tight in the space $C([0,T]\times \R)$, and then we
identify the limit law by characterizing the underlying Gaussian
candidate for the limit (see Theorem \ref{th: convergence linear
additive case} for details). Finally, we point out that in the
linear case, the proof of the main convergence result holds for both
wave and heat equations.

We remark that there are several well-posedness results for
equations \eqref{eq: wave} and \eqref{eq: heat} with $b=0$ and a
more general noise term, namely of the form $\sigma(u(t,x))
\dot{W}^H(t,x)$, for some function $\sigma:\R\rightarrow \R$: if
$H<\frac12$, we refer the reader to, e.g., \cite{1/4<H<1/2,Tindel},
while the case $H\geq \frac12$ falls in the general framework of
Walsh and Dalang (see \cite{walsh,extend,rev}). When $H<\frac12$,
most of the existing work focuses on the particular coefficient
$\sigma(z)=z$, which corresponds to the so-called Hyperbolic
Anderson Model (HAM) and the Parabolic Anderson Model (PAM),
respectively (see \cite{1/4<H<1/2,Hu,BJQ} and references therein).
In these cases, the fact that $H<\frac12$ entails important
technical difficulties in order to define stochastic integrals with
respect to the noise $W^H$. Moreover, as proved in
\cite[Prop.3.7]{BJQ}, the above equations admit a unique solution if
and only if $H>\frac14$. In the present article, we do not encounter
such issues since the noise appears in the equations in an additive
way. Indeed, we plan to address the convergence in law with respect
to $H$ for the HAM and PAM in a separate publication, where the
underlying stochastic integrals are interpreted in the Skorohod
sense.

We turn now to the study of the quasi-linear case, that is assuming
that $b$ is a general Lipschitz function. Here, we first prove that
equations \eqref{eq: wave} and \eqref{eq: heat} admit a unique
solution (see Theorem \ref{th: existence and uniqueness b
Lipschitz}). This result holds for any $H\in (0,1)$ and, as far as
we know, is new for the case $H<\frac12$ (if $H>\frac12$, it follows
from \cite[Thm. 4.3]{rev}). Moreover, we note that the proof of
Theorem \ref{th: existence and uniqueness b Lipschitz} can be built
in a unified way for both wave and heat equations.

Nevertheless, the analysis of the weak convergence in the
quasi-linear case does not admit a unified proof for wave and heat
equations. More precisely, for the wave equation, the convergence in
law of $u^H$ to $u^{H_0}$, whenever $H\rightarrow H_0$, follows from
a pathwise argument: we prove that, for almost all $\omega$, the
solution of \eqref{eq: wave} can be seen as the image of the
stochastic convolution through a certain continuous functional $F:
C([0,T]\times \R)\rightarrow C([0,T]\times \R)$. In the case of the
heat equation, this argument cannot be directly applied, for the
associated deterministic equation which has to be solved in order to
define the above-mentioned functional is not well-posed for a
general coefficient $b$. We overcome this difficulty by first
assuming that $b$ is a bounded function and then using a truncation
argument. As it will be exhibited in Section \ref{subsec: b unbounded},
this part of the paper contains most of the technical difficulties
that we need to face. It is also worthy to point out that, in the
analysis of the wave equation and the heat equation with bounded
$b$, we have established ad hoc versions of Gr\"onwall lemma which
have been crucial to complete the corresponding proofs (see,
respectively, Lemmas \ref{lemma: Gronwall wave} and \ref{lemma:
gronwall heat}).

This article is organized as follows. Section \ref{ch: linear additive} is devoted to study the convergence in law
for equations \eqref{eq: wave} and \eqref{eq: heat} in the linear additive case (i.e. $b=0$).
In Section \ref{ch: quasi-linear additive}, existence, uniqueness and pathwise H\"older continuity in the
quasi-linear additive case are established. Finally, the main result on weak convergence for the
quasi-linear case is proved in Section \ref{sec:1}: here we treat separately the case of the wave equation
(Section \ref{sec:w}), the heat equation with $b$ bounded (Section \ref{subsec: heat b bounded}) and
the heat equation with general $b$ (Section \ref{subsec: b unbounded}).

When we make use of the constant $C$, we are meaning that the value
of that constant is not relevant for our computations, and also that
it can change its value from line to line. When two constants
(possibly different) appear on the same line, we will call them
$C_1,C_2$. Sometimes we use $C_p$ when we want to stress that the
constant depends on some exponent $p$.


\section{Weak convergence for the linear additive case}
\label{ch: linear additive}

In this section, we consider equations \eqref{eq: wave} and
\eqref{eq: heat} in the case where the drift term vanishes, that is
$b=0$. Then, the mild formulation \eqref{def: mild solution} reads
\begin{equation}
\label{eq: mild formulation linear additive}
    u^H(t,x)=  I_0(t,x)+ \int_{0}^{t}\int_{\R} G_{t-s}(x-y)W^H(ds,dy),
\end{equation}
where we recall that the term $I_0$ and the fundamental solution $G$ have been defined in
\eqref{eq:0} and \eqref{eq:1}, respectively. Throughout this section we assume that $H\in (0,1)$.
Notice that \eqref{eq: mild formulation linear additive} is now an explicit formula for the solution $u^H$.
We consider the following hypotheses on the initial data:

\medskip

\noindent {\bf{Hypothesis A:}} It holds that
\begin{itemize}
 \item[(a)] {\it{Wave equation}}: $u_0$ is continuous and $v_0\in L^1_{loc}(\mathbb R)$.
 \item[(b)] {\it{Heat equation}}:  $u_0$ is continuous and bounded.
\end{itemize}

It can be easily verified that the above conditions on the initial
data imply that $I_0:\R_+\times \R\rightarrow \R$ is a continuous
function. On the other hand, the stochastic convolution in
\eqref{eq: mild formulation linear additive} is a well-defined
centered Gaussian random variable since, for any $(t,x)\in
[0,T]\times \R$,
\begin{align*}
 \mathrm{E} \left[\left|\int_{0}^{t}\int_{\R} G_{t-s}(x-y)W^H(ds,dy)\right|^2\right] &
 = \int_{0}^{t}\int_{\R} |\F G_{t-s}(\xi)|^2|\xi|^{1-2H}\,d\xi \,ds \\
 & \leq  \int_{0}^{T}\int_{\R} |\F G_{s}(\xi)|^2|\xi|^{1-2H}\,d\xi \,ds <\infty,
\end{align*}
where we have applied Lemma \ref{lemma: 3.1} below. Hence, we have the following result:

\begin{theorem}\label{thm:1}
 Assume that Hypothesis A holds and let $H\in (0,1)$. Then, there exists a unique solution
 $u^H=\{u^H(t,x),\, (t,x)\in [0,T]\times \R\}$ of equation \eqref{eq: mild formulation linear additive}.
 Moreover, the random field $u^H$ admits a modification with continuous sample paths.
\end{theorem}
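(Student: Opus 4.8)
The plan is the following. Since \eqref{eq: mild formulation linear additive} is an explicit formula, existence and uniqueness are essentially a matter of checking that its right-hand side is a bona fide adapted, jointly measurable process. The term $I_0$ is deterministic and, as already observed, continuous under Hypothesis A. The stochastic convolution
\[
N^H(t,x):=\int_{0}^{t}\int_{\R} G_{t-s}(x-y)\,W^H(ds,dy)
\]
is, for each $(t,x)$, a well-defined centered Gaussian variable by the computation preceding the statement (which invokes Lemma \ref{lemma: 3.1}); it is $\F^H_t$-measurable because the integrand $G_{t-\cdot}(x-\cdot)\,1_{[0,t]}$ is supported on times $\le t$; and it will be jointly measurable once we exhibit a continuous modification. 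Any adapted, jointly measurable solution must then coincide with $I_0+N^H$ for every $(t,x)$ almost surely, which is exactly uniqueness in the sense of modifications. Since $I_0$ is a fixed continuous function, $u^H$ admits a continuous modification if and only if $N^H$ does, so the whole problem reduces to constructing a continuous modification of the centered Gaussian field $N^H$.

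For this I would invoke the Kolmogorov--Chentsov criterion on $[0,T]\times[-M,M]$ for each $M>0$ and then patch the modifications. Since $N^H$ is centered Gaussian, for every integer $k\ge1$ one has, with $c_k$ depending only on $k$,
\[
\E{\big|N^H(t,x)-N^H(s,y)\big|^{2k}} = c_k\,\Big(\E{\big|N^H(t,x)-N^H(s,y)\big|^{2}}\Big)^{k},
\]
so it suffices to prove an $L^2$ bound of the form $\E{|N^H(t,x)-N^H(s,y)|^{2}}\le C\,\big(|t-s|+|x-y|\big)^{\gamma}$ for some $\gamma=\gamma(H)>0$; choosing $k$ with $2k\gamma>2$ then produces a H\"older-continuous modification on each compact. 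Writing $N^H(t,x)-N^H(s,y)=\big(N^H(t,x)-N^H(t,y)\big)+\big(N^H(t,y)-N^H(s,y)\big)$ and using the Wiener isometry with spectral measure $\mu_H$, the spatial increment equals
\[
\E{\big|N^H(t,x)-N^H(t,y)\big|^{2}} = \int_{0}^{t}\!\int_{\R} |\F G_{t-r}(\xi)|^{2}\,\big|e^{-i\xi x}-e^{-i\xi y}\big|^{2}\,\mu_H(d\xi)\,dr,
\]
which, using $|e^{-i\xi x}-e^{-i\xi y}|^{2}=2\big(1-\cos(\xi(x-y))\big)\le\min\!\big(4,|\xi|^{2}|x-y|^{2}\big)$ together with the integrability furnished by Lemma \ref{lemma: 3.1}, yields a positive power of $|x-y|$; the temporal increment is treated by splitting the $r$-integral over $[0,s]$ and $[s,t]$, bounding $|\F G_{t-r}(\xi)-\F G_{s-r}(\xi)|$ on the first piece and $|\F G_{t-r}(\xi)|$ on the second.

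The main technical point is the explicit oscillatory-integral estimate producing the exponent $\gamma(H)$, and in particular the spatial increment for the wave equation, where $\F G_r(\xi)$ is (up to constants) $\sin(r|\xi|)/|\xi|$: there one splits the $\xi$-integral at $|\xi|\sim|x-y|^{-1}$ and must exploit the cancellation in the average over $r$ of $\sin^{2}(r|\xi|)$. For the heat kernel the Gaussian factor $\F G_r(\xi)=e^{-r|\xi|^{2}/2}$ makes the analogous bounds straightforward. These computations coincide with those carried out in \cite{BalanJFAA} for $H\le\frac12$ and in \cite{extend} for $H>\frac12$, so an alternative is simply to invoke those references; the content of Theorem \ref{thm:1} is to record that both regimes can be handled at once. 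Once the $L^2$ increment estimate is established, Kolmogorov's theorem yields a continuous modification of $N^H$, hence of $u^H=I_0+N^H$; adaptedness of the modification is automatic since it agrees almost surely with $N^H(t,x)$ at each fixed $(t,x)$ and $(\F^H_t)_{t\ge0}$ contains the $\mathbb{P}$-null sets.
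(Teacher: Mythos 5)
Your proposal is correct and follows essentially the same route as the paper: existence and uniqueness are immediate from the explicit formula, and the continuous modification is obtained by combining the Gaussian equivalence of moments with $L^2$-increment estimates for the stochastic convolution (via the spectral isometry) and Kolmogorov's continuity criterion. The paper simply delegates those increment estimates to Step 1 of the proof of Theorem \ref{th: convergence linear additive case}, which rests on Lemmas \ref{lemma: 3.1}, \ref{lemma: 3.4} and \ref{lemma: 3.5} and yields exactly the bounds you sketch (incidentally, for the wave kernel the crude bound $\sin^2(r|\xi|)\leq\min(1,r^2|\xi|^2)$ suffices, so no cancellation in the $r$-average is actually needed).
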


\begin{proof}
 We only need to prove that $u^H$ has a modification with continuous paths. Indeed, since $I_0$ is deterministic and continuous,
 we check that the stochastic convolution
 $\tilde{u}^H(t,x):=u^H(t,x)-I_0(t,x)$
 admits
 a continuous modification. This is a direct consequence of Step 1 in the proof of Theorem
 \ref{th: convergence linear additive case} below. More precisely, for any $p\geq 2$, there
 exists a constant $C$ (depending only on $p$) such that, for all $t,t'\in [0,T]$ and
 $x,x'\in \R$, it holds
 \[
  \E{|\tilde{u}^H(t,x)-\tilde{u}^H(t',x')|^p} \leq C \left\{ |t-t'|^{\alpha p} + |x-x'|^{p H}\right\},
 \]
 where $\alpha= H$ for the wave equation and $\alpha= \frac H2$ for the heat equation.
 An application of Kolmogorov's continuity criterion concludes the proof.
\end{proof}

\begin{remark}\label{rmk:1}
In the case of the heat equation, the assumptions of Theorem
\ref{thm:1} indeed imply that, for all $p\geq 1$,
\[
\sup_{(t,x)\in [0,T]\times \R} \E{|u^H(t,x)|^p}<\infty.
\]
For the wave equation, this property can be obtained by slightly
strengthening the hypotheses of $u_0$ and $v_0$, e.g. assuming that
they are bounded functions (see \cite[Lem. 4.2]{rev}).
\end{remark}

\begin{remark}\label{rmk:2}
The proof of Theorem \ref{thm:1} implies that the stochastic
convolution in equation \eqref{eq: mild formulation linear additive}
has a modification which is (locally) $\beta_1$-H\"older continuous
in time for any $\beta_1\in (0,\alpha)$ and (locally)
$\beta_2$-H\"older continuous in space for any $\beta_2\in (0,H)$.
\end{remark}

In the proof of the main result of the present section (cf. Theorem
\ref{th: convergence linear additive case}), we will need the
following three technical lemmas (proved in \cite{1/4<H<1/2}). They
provide explicit estimates, depending on $H$, of the norm in the
space $L^2(\R;\mu^H)$ of the Fourier transforms of the fundamental
solutions of the deterministic wave and heat equations, where we
recall that, respectively:
\begin{equation}
\F G_t(\xi)=\dfrac{\sin(t|\xi|)}{|\xi|} \qquad \text{and} \qquad \F
G_t(\xi)=\exp\Big(\frac{-t\xi^2}{2}\Big), \quad t>0,\, \xi\in \R.
\label{eq:2}
\end{equation}
 In
the following three lemmas, we will denote either one of these two
functions by $\F G_t(\xi)$. We recall that the spatial spectral
measure is given by $\mu^H(d\xi)=c_H |\xi|^{1-2H} d\xi$ (see
\eqref{eq: spectral measure and c_H}).

\begin{lemma}[\cite{1/4<H<1/2}, Lemma 3.1.]
    \label{lemma: 3.1}
    Let $T>0$. Then,  the integral
    $$A_T(\alpha):=\int_{0}^{T}\int_{\R} |\F G_t(\xi)|^2|\xi|^\alpha\,d\xi \,dt$$
    converges if and only if $\alpha\in(-1,1)$. In this case, it holds:
    $$A_T(\alpha)=\begin{cases}
    2^{1-\alpha}C_\alpha\dfrac{1}{2-\alpha}T^{2-\alpha}  & \text{for the heat equation,}\\
    \\
    \dfrac{2}{1-\alpha}\Gamma\Big( \dfrac{\alpha+1}{2} \Big)T^{(1-\alpha)/2} & \text{for the wave equation,}\\
    \end{cases}$$
    where the constant $C_\alpha$ is given by
    $$C_{\alpha}=\begin{cases}
    \dfrac{1}{1-\alpha}\Gamma(\alpha)\sin(\pi\alpha/2), & \alpha\in(0,1),\\
    \\
    \dfrac{1}{\alpha}\dfrac{1}{1-\alpha}\Gamma(1+\alpha)\sin(\pi\alpha/2), & \alpha \in (-1,0),\\
    \\
    \dfrac{\pi}{2}, & \alpha=0.\\
    \end{cases}$$
\end{lemma}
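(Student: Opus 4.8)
The plan is to compute $A_T(\alpha)$ directly as an iterated integral. Since the integrand is nonnegative, Tonelli's theorem applies with no justification needed, and it is convenient to integrate in $\xi$ first and in $t$ afterwards. In each of the two cases the self-similarity of the kernel in \eqref{eq:2} lets a single scaling substitution in $\xi$ decouple the two variables: the inner integral becomes an explicit power of $t$ times a one-dimensional integral that is either a Gamma value or, for the oscillatory kernel, an (analytically continued) Mellin transform of the cosine. The outer $t$-integral over $[0,T]$ is then elementary, and the joint finiteness of the inner and outer integrals is exactly what cuts out the range $\alpha\in(-1,1)$.

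For the kernel $\F G_t(\xi)=\exp(-t\xi^2/2)$ one has $|\F G_t(\xi)|^2=e^{-t\xi^2}$, and the substitution $\xi=t^{-1/2}\eta$ gives
\[
\int_{\R}e^{-t\xi^2}|\xi|^\alpha\,d\xi=t^{-(\alpha+1)/2}\int_{\R}e^{-\eta^2}|\eta|^\alpha\,d\eta ,
\]
where the last integral is finite, and equal to $\Gamma\!\big(\tfrac{\alpha+1}{2}\big)$ (via $w=\eta^2$), precisely when $\alpha>-1$. Then $\int_0^T t^{-(\alpha+1)/2}\,dt$ converges iff $(\alpha+1)/2<1$, i.e. $\alpha<1$, and equals $\tfrac{2}{1-\alpha}T^{(1-\alpha)/2}$; the product of the two factors is the value asserted for this kernel.

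For the kernel $\F G_t(\xi)=\sin(t|\xi|)/|\xi|$ one has $|\F G_t(\xi)|^2|\xi|^\alpha=\sin^2(t|\xi|)\,|\xi|^{\alpha-2}$, and $\eta=t|\xi|$ gives $\int_{\R}\sin^2(t|\xi|)|\xi|^{\alpha-2}\,d\xi=t^{1-\alpha}J(\alpha)$ with $J(\alpha):=\int_{\R}\sin^2|\eta|\,|\eta|^{\alpha-2}\,d\eta$. Here $J(\alpha)<\infty$ iff $\alpha\in(-1,1)$ (near $0$, $\sin^2|\eta|\sim\eta^2$ forces $\alpha>-1$; near $\infty$, $\sin^2|\eta|\le1$ gives integrability for $\alpha<1$, and a lower bound on the intervals where $\sin^2|\eta|\ge\tfrac12$ forces $\alpha<1$), while $\int_0^T t^{1-\alpha}\,dt=\tfrac{1}{2-\alpha}T^{2-\alpha}$ always converges, so again $A_T(\alpha)<\infty$ iff $\alpha\in(-1,1)$, with $A_T(\alpha)=\tfrac{1}{2-\alpha}J(\alpha)\,T^{2-\alpha}$. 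To evaluate $J(\alpha)$, write $\sin^2|\eta|=\tfrac12(1-\cos2|\eta|)$ and substitute $t=2|\eta|$, obtaining $J(\alpha)=2^{1-\alpha}\int_0^\infty t^{\alpha-2}(1-\cos t)\,dt$; by the Mellin identity $\int_0^\infty t^{s-1}(1-\cos t)\,dt=-\Gamma(s)\cos(\pi s/2)$, valid for $s\in(-2,0)$ (here $s=\alpha-1$), together with $\cos\tfrac{\pi(\alpha-1)}{2}=\sin\tfrac{\pi\alpha}{2}$, this yields $J(\alpha)=-2^{1-\alpha}\Gamma(\alpha-1)\sin\tfrac{\pi\alpha}{2}$. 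Hence $A_T(\alpha)=2^{1-\alpha}C_\alpha\tfrac{1}{2-\alpha}T^{2-\alpha}$ with $C_\alpha=-\Gamma(\alpha-1)\sin\tfrac{\pi\alpha}{2}$, and the functional equation $\Gamma(\alpha)=(\alpha-1)\Gamma(\alpha-1)$ (used once for $0<\alpha<1$, and once more together with $\Gamma(\alpha)=\Gamma(\alpha+1)/\alpha$ for $-1<\alpha<0$) rewrites $C_\alpha$ as $\tfrac{1}{1-\alpha}\Gamma(\alpha)\sin\tfrac{\pi\alpha}{2}$, respectively $\tfrac{1}{\alpha(1-\alpha)}\Gamma(1+\alpha)\sin\tfrac{\pi\alpha}{2}$; the value $C_0=\pi/2$ follows by continuity from the simple pole of $\Gamma$ at $-1$ (residue $-1$) and $\sin\tfrac{\pi\alpha}{2}\sim\tfrac{\pi\alpha}{2}$. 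This is exactly the constant in the statement.

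The scaling substitutions, the $t$-integrations and the Gamma-function bookkeeping are routine; the only genuine work is the evaluation of $J(\alpha)$, i.e. the identity $\int_0^\infty t^{s-1}(1-\cos t)\,dt=-\Gamma(s)\cos(\pi s/2)$ for $s\in(-2,0)$. I would obtain this either by analytically continuing the classical Mellin transform $\int_0^\infty t^{s-1}\cos t\,dt=\Gamma(s)\cos(\pi s/2)$ (first established for $0<s<1$) to the strip $-2<\mathrm{Re}\,s<1$ after subtracting the constant $1$ to tame the origin, or, more elementarily, by a single integration by parts: for $s\in(-2,0)$ the boundary terms vanish and one is left with $-\tfrac{1}{s}\int_0^\infty t^{s}\sin t\,dt=-\tfrac{1}{s}\Gamma(s+1)\sin\tfrac{\pi(s+1)}{2}=-\Gamma(s)\cos\tfrac{\pi s}{2}$, using the classical sine transform (applicable since $s+1\in(-1,1)$). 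For the ``only if'' direction when $\alpha\notin(-1,1)$: if $\alpha\le-1$ the inner $\xi$-integral diverges near $\xi=0$ for every $t>0$, while if $\alpha\ge1$ the divergence comes from the $t$-integral near $t=0$ for the kernel $\exp(-t\xi^2/2)$ and from the $|\xi|\to\infty$ tail for the kernel $\sin(t|\xi|)/|\xi|$.
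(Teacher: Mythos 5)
Your computation is correct, and since the paper never proves Lemma \ref{lemma: 3.1} itself (it is quoted verbatim from \cite{1/4<H<1/2}), there is no internal proof to compare with: your argument is a legitimate self-contained derivation. Tonelli plus one scaling substitution per kernel does decouple the variables; the Gaussian integral gives $\Gamma\big(\frac{\alpha+1}{2}\big)$ exactly when $\alpha>-1$; and the only genuinely nontrivial step, the identity $\int_0^\infty t^{s-1}(1-\cos t)\,dt=-\Gamma(s)\cos(\pi s/2)$ for $s\in(-2,0)$, is handled correctly by your integration by parts (the boundary terms vanish precisely because $-2<s<0$, the sine transform applies since $s+1\in(-1,1)$, and the case $s=-1$, i.e.\ $\alpha=0$, is covered either directly via $\int_0^\infty t^{-1}\sin t\,dt=\pi/2$ or by the continuity argument you give, which is justified since $J(\alpha)$ depends continuously on $\alpha$ by dominated convergence). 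The Gamma-function bookkeeping reproducing $C_\alpha$ in the three regimes checks out, as do the endpoint sanity checks at $\alpha=0$ ($\frac{\pi}{2}T^2$ for the wave kernel, $2\sqrt{\pi T}$ for the heat kernel), and your treatment of the ``only if'' direction is adequate.

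One important remark: your derivation attaches $\frac{2}{1-\alpha}\Gamma\big(\frac{\alpha+1}{2}\big)T^{(1-\alpha)/2}$ to the heat kernel and $2^{1-\alpha}C_\alpha\frac{1}{2-\alpha}T^{2-\alpha}$ to the wave kernel, which is the opposite of the labels printed in the statement above. Your assignment is the correct one: $\F G_t(\xi)=e^{-t\xi^2/2}$ gives the inner integral $\Gamma\big(\frac{\alpha+1}{2}\big)t^{-(\alpha+1)/2}$, hence $T^{(1-\alpha)/2}$, while $\F G_t(\xi)=\sin(t|\xi|)/|\xi|$ gives $t^{1-\alpha}J(\alpha)$, hence $T^{2-\alpha}$. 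This is also how the paper actually uses the lemma: in \eqref{eq: estimate for I_1}, with $\alpha=1-2H$, the bound $(t'-t)^{1+2H}=(t'-t)^{2-\alpha}$ is assigned to the wave equation and $(t'-t)^{H}=(t'-t)^{(1-\alpha)/2}$ to the heat equation, consistent with your formulas and with the original statement in \cite{1/4<H<1/2}. So the two cases in the displayed formula for $A_T(\alpha)$ are interchanged in the statement as reproduced here (a transcription slip, not an error in your argument); your proof establishes the corrected version and needs no repair.
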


\begin{lemma}[\cite{1/4<H<1/2}, Lemma 3.4.]
    \label{lemma: 3.4}
    Let $T>0$ and $\alpha\in(-1,1)$. Then, for any $h>0$, it holds:
    \begin{equation*}
    \int_{0}^{T}\int_{\R}(1-\cos(\xi h))\, |\F G_t(\xi)|^2|\xi|^\alpha\,d\xi \,dt \leq
    \begin{cases}
    C|h|^{1-\alpha} & \text{for the heat equation,} \\
    CT|h|^{1-\alpha} & \text{for the wave equation,} \\
    \end{cases}
    \end{equation*}
    where $C=\int_{\R} (1-\cos\eta)|\eta|^{\alpha-2}d\eta$.
\end{lemma}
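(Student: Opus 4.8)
The plan is to compute the inner time--integral of $|\F G_t(\xi)|^2$ explicitly for each of the two fundamental solutions, bound it by $C_{\mathrm{eq}}\,|\xi|^{-2}$ (with $C_{\mathrm{eq}}=1$ for the heat equation and $C_{\mathrm{eq}}=T$ for the wave equation), and then reduce the remaining spatial integral to the stated constant by a scaling change of variables.

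\textbf{Step 1 (time integral).} Since the integrand is nonnegative, Tonelli's theorem lets us integrate first in $t$. Using \eqref{eq:2}, for the heat equation $|\F G_t(\xi)|^2=e^{-t\xi^2}$, so
\[
\int_0^T |\F G_t(\xi)|^2\,dt=\frac{1-e^{-T\xi^2}}{\xi^2}\le \frac{1}{\xi^2},\qquad \xi\neq 0,
\]
while for the wave equation $|\F G_t(\xi)|^2=\sin^2(t|\xi|)/\xi^2\le \xi^{-2}$, whence
\[
\int_0^T |\F G_t(\xi)|^2\,dt\le \frac{T}{\xi^2},\qquad \xi\neq 0 .
\]
This is exactly where the dichotomy heat/wave — and hence the extra factor $T$ in the wave case — originates: the heat kernel decays in $t$, so its time integral is bounded uniformly in $T$, whereas $\sin^2(t|\xi|)$ does not decay.

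\textbf{Step 2 (scaling).} Inserting these bounds, in both cases we are left with
\[
\int_{0}^{T}\int_{\R}(1-\cos(\xi h))\,|\F G_t(\xi)|^2|\xi|^\alpha\,d\xi\,dt \le C_{\mathrm{eq}}\int_{\R}(1-\cos(\xi h))\,|\xi|^{\alpha-2}\,d\xi,
\]
with $C_{\mathrm{eq}}=1$ (heat) or $C_{\mathrm{eq}}=T$ (wave). The substitution $\eta=\xi h$ (recall $h>0$) gives $d\xi=h^{-1}d\eta$ and $|\xi|^{\alpha-2}=h^{2-\alpha}|\eta|^{\alpha-2}$, so the last integral equals
\[
h^{1-\alpha}\int_\R (1-\cos\eta)\,|\eta|^{\alpha-2}\,d\eta = C\,|h|^{1-\alpha},
\]
which yields the claimed estimate.

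\textbf{Step 3 (finiteness of $C$).} It remains to observe that $C<\infty$ precisely when $\alpha\in(-1,1)$: near $\eta=0$ one has $1-\cos\eta\sim \eta^2/2$, so the integrand behaves like $|\eta|^\alpha$ and is integrable iff $\alpha>-1$; near $|\eta|=\infty$ the factor $1-\cos\eta$ is bounded, so the integrand is $O(|\eta|^{\alpha-2})$ and is integrable iff $\alpha<1$. There is no real obstacle here; the only points needing a moment of care are the elementary bound of Step 1 and this integrability check, which reproduces exactly the hypothesis $\alpha\in(-1,1)$.
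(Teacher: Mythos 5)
Your proof is correct, and the constant you obtain matches the stated $C=\int_{\R}(1-\cos\eta)\,|\eta|^{\alpha-2}\,d\eta$ exactly in both cases. Note that the paper itself gives no proof of this lemma (it is quoted from \cite{1/4<H<1/2}, Lemma 3.4); your argument --- bounding $\int_0^T|\F G_t(\xi)|^2\,dt$ by $|\xi|^{-2}$ (heat) or $T|\xi|^{-2}$ (wave) and then rescaling $\eta=\xi h$ --- is the standard one behind that reference's estimate, so there is nothing to add.
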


\begin{lemma}[\cite{1/4<H<1/2}, Lemma 3.5.]
    \label{lemma: 3.5}
    Let $T>0$ and $\alpha\in(-1,1)$. Then, for any $h>0$, it holds:
    \begin{equation*}
    \int_{0}^{T}\int_{\R} |\F G_{t+h}(\xi)-\F G_{t}(\xi)|^2|\xi|^\alpha\,d\xi \,dt \leq
    \begin{cases}
    C_\alpha |h|^{(1-\alpha)/2} & \text{for the heat equation,} \\
    C_{\alpha} T|h|^{1-\alpha} & \text{for the wave equation,} \\
    \end{cases}
    \end{equation*}
    where
$$C_\alpha= \int_{\mathbb
R}\frac{(1-e^{-\eta^2/2})^2}{|\eta|^{2-\alpha}}d\eta \quad\text{for
the heat equation, and} $$
$$C_\alpha=4\int_{\mathbb R}\frac{\min(1,|\eta|^2)}{|\eta|^{2-\alpha}}d\eta \quad \text{for the wave equation.}
$$
\end{lemma}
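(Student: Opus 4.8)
The plan is to treat the heat and wave cases separately, using the explicit formulas \eqref{eq:2} for $\F G_t(\xi)$. In both cases the strategy is the same: express the increment $\F G_{t+h}(\xi)-\F G_t(\xi)$ so that the $t$-dependence factors out, bound the $t$-integral by something independent of $t$, and then perform a scaling substitution in $\xi$ to extract the power of $h$, leaving a convergent integral exactly when $\alpha\in(-1,1)$. For the heat equation, write $\F G_{t+h}(\xi)-\F G_t(\xi)=e^{-t\xi^2/2}\,(e^{-h\xi^2/2}-1)$, so that
\[
\int_0^T \big|\F G_{t+h}(\xi)-\F G_t(\xi)\big|^2\,dt=(1-e^{-h\xi^2/2})^2\int_0^T e^{-t\xi^2}\,dt\le\frac{(1-e^{-h\xi^2/2})^2}{\xi^2}.
\]
Multiplying by $|\xi|^\alpha$ and integrating in $\xi$ gives the bound $\int_\R (1-e^{-h\xi^2/2})^2\,|\xi|^{\alpha-2}\,d\xi$, and the change of variables $\eta=\sqrt{h}\,\xi$ turns this into $h^{(1-\alpha)/2}\int_\R (1-e^{-\eta^2/2})^2\,|\eta|^{\alpha-2}\,d\eta$, which is the asserted estimate with the stated constant $C_\alpha$. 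The last integral converges because $(1-e^{-\eta^2/2})^2=O(\eta^4)$ near the origin (so integrability at $0$ holds for all $\alpha>-1$) and because $\alpha<1$ controls the tail.

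For the wave equation, apply the identity $\sin a-\sin b=2\cos\tfrac{a+b}{2}\sin\tfrac{a-b}{2}$ with $a=(t+h)|\xi|$ and $b=t|\xi|$ to obtain
\[
\big|\F G_{t+h}(\xi)-\F G_t(\xi)\big|^2=\frac{4}{\xi^2}\cos^2\!\Big(\tfrac{(2t+h)|\xi|}{2}\Big)\sin^2\!\Big(\tfrac{h|\xi|}{2}\Big)\le\frac{4}{\xi^2}\sin^2\!\Big(\tfrac{h|\xi|}{2}\Big).
\]
The right-hand side is independent of $t$, so integrating over $[0,T]$ only contributes a factor $T$; after multiplying by $|\xi|^\alpha$, integrating in $\xi$, and substituting $\eta=h\xi$, one is left with $T\,h^{1-\alpha}\int_\R 4\sin^2(\eta/2)\,|\eta|^{\alpha-2}\,d\eta$. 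The elementary bound $4\sin^2(\eta/2)\le\min(4,\eta^2)\le 4\min(1,|\eta|^2)$ then produces the constant $C_\alpha=4\int_\R\min(1,|\eta|^2)\,|\eta|^{\alpha-2}\,d\eta$, finite precisely for $\alpha\in(-1,1)$ (power $|\eta|^\alpha$ at $0$, power $|\eta|^{\alpha-2}$ at infinity).

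I do not expect a serious obstacle in this lemma; the only delicate point is to choose the intermediate bounds carefully so that the $\xi$-integrand takes the form $|\xi|^{\alpha-2}$ times a function of $h|\xi|$ (for the wave equation) or of $\sqrt{h}\,|\xi|$ (for the heat equation), since this is exactly what makes the scaling substitution isolate the correct power of $h$ while leaving a convergent constant under the hypothesis $\alpha\in(-1,1)$. The same template underlies Lemma \ref{lemma: 3.4} and is consistent with the factor-$T$ discrepancy between the wave and heat estimates visible in all three lemmas.
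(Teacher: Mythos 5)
Your argument is correct: both scaling substitutions are carried out properly, the elementary bounds $4\sin^2(\eta/2)\le 4\min(1,|\eta|^2)$ and $\int_0^T e^{-t\xi^2}dt\le \xi^{-2}$ are valid, and the resulting constants and powers of $h$ match the statement, with convergence of the constants exactly for $\alpha\in(-1,1)$. The paper itself does not reprove this lemma but quotes it from \cite{1/4<H<1/2}, and your proof is essentially the standard argument given there (the sine-difference step is equivalent to the bound $|\sin a-\sin b|\le\min(2,|a-b|)$ used in that reference), so there is nothing to add.
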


\medskip
\medskip

We will also make use of the following tightness criterion in the
plane (see \cite[Prop. 2.3]{Yor}):
\begin{theorem}
    \label{th: centsov}
    Let $\{X_\lambda\}_{\lambda\in\Lambda}$ be a family of random functions indexed on the
    set $\Lambda$ and taking values in the space $C([0,T]\times \R)$, in which we consider
    the metric of uniform convergence over compact sets. Then, the family
    $\{X_\lambda\}_{\lambda\in \Lambda}$ is tight if, for any compact set $J\subset \R$,
    there exist $p',p>0$, $\delta>2$, and a constant $C$ such that the following holds
    for any $t',t\in [0,T]$ and $x',x\in J$:
    \begin{itemize}
        \item[(i)] $\sup_{\lambda\in \Lambda}\E{|X_\lambda(0,0)|^{p'}}<\infty$,
        \item[(ii)] $\sup_{\lambda\in \Lambda} \E{|X_\lambda(t',x')-X_\lambda(t,x)|^p}\leq C\Big(|t'-t|+|x'-x|\Big)^\delta$.
    \end{itemize}
\end{theorem}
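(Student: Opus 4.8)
The plan is to reduce the statement to the classical Kolmogorov--Chentsov tightness criterion over compact parameter sets, and then to patch the compact pieces together by a diagonal argument. Write $K_N:=[0,T]\times[-N,N]$ and, on $[0,T]\times\R$, use the metric $\rho\big((t,x),(t',x')\big):=|t-t'|+|x-x'|$. Since $[0,T]\times\R$ is locally compact and $\sigma$-compact, $C([0,T]\times\R)$ equipped with the topology of uniform convergence on compact sets is Polish, and by the Arzel\`a--Ascoli theorem a closed subset $\mathcal K\subset C([0,T]\times\R)$ is compact precisely when, for every $N$, the family of restrictions $\{f|_{K_N}:f\in\mathcal K\}$ is uniformly bounded and equicontinuous. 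So it suffices to exhibit, for each $\varepsilon>0$, an exponent $\gamma>0$ and constants $M_N,R_N>0$ such that the set
\[
\mathcal K_\varepsilon:=\bigcap_{N\in\N}\Big\{f\in C([0,T]\times\R):\ \sup_{K_N}|f|\le M_N,\ |f(z)-f(z')|\le R_N\,\rho(z,z')^{\gamma}\ \text{for all }z,z'\in K_N\Big\}
\]
(which is closed and, by the above characterization, compact) satisfies $\mathbb{P}(X_\lambda\in\mathcal K_\varepsilon)\ge 1-\varepsilon$ uniformly in $\lambda\in\Lambda$.

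The hard part will be a \emph{$\lambda$-uniform} quantitative version of Kolmogorov's continuity theorem. Fix $\gamma\in\big(0,(\delta-2)/p\big)$, a nonempty interval exactly because $\delta>2=\dim\big([0,T]\times\R\big)$. The claim is that hypothesis (ii) forces, for every $N$,
\[
\sup_{\lambda\in\Lambda}\ \E{\Big(\sup_{z\neq z'\in K_N}\frac{|X_\lambda(z)-X_\lambda(z')|}{\rho(z,z')^{\gamma}}\Big)^{p}}\le C_N,
\]
with $C_N=C_N(p,\delta,\gamma,C,N,T)$ independent of $\lambda$. First I would obtain this either from the dyadic chaining estimate underlying the usual proof of Kolmogorov's theorem or, more cleanly, from the Garsia--Rodemich--Rumsey inequality applied with $\Psi(u)=|u|^p$ and a power weight adapted to $K_N$; in both routes the only inputs are the moment bound (ii) and the gap $\delta-2>0$, so every constant is an explicit function of $p,\delta,\gamma,C$ and of $\mathrm{diam}\,K_N$ --- in particular uniform over $\lambda$. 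Combining this with (i), which bounds $X_\lambda$ at the base point $(0,0)\in K_N$, together with the triangle inequality $\sup_{K_N}|X_\lambda|\le|X_\lambda(0,0)|+(\mathrm{diam}\,K_N)^{\gamma}\sup_{z\neq z'\in K_N}|X_\lambda(z)-X_\lambda(z')|/\rho(z,z')^{\gamma}$, one gets $\sup_{\lambda}\E{(\sup_{K_N}|X_\lambda|)^{q}}\le C_N'$ with $q:=\min(p,p')$, again with $C_N'$ not depending on $\lambda$.

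Finally I would conclude by Chebyshev's inequality and a summation over $N$. Given $\varepsilon>0$ and $\gamma$ as above, use the two uniform moment bounds just established to pick, for each $N\in\N$, numbers $M_N$ and $R_N$ with
\[
\sup_{\lambda\in\Lambda}\mathbb{P}\Big(\sup_{K_N}|X_\lambda|>M_N\Big)\le\varepsilon\,2^{-N-2},\qquad
\sup_{\lambda\in\Lambda}\mathbb{P}\Big(\sup_{z\neq z'\in K_N}\tfrac{|X_\lambda(z)-X_\lambda(z')|}{\rho(z,z')^{\gamma}}>R_N\Big)\le\varepsilon\,2^{-N-2}.
\]
Then $\mathcal K_\varepsilon$ defined above is compact, and a union bound over $N$ yields $\mathbb{P}(X_\lambda\notin\mathcal K_\varepsilon)\le\varepsilon$ for every $\lambda$; hence $\{X_\lambda\}_{\lambda\in\Lambda}$ is tight. (If for some compact $J\subset\R$ one prefers a reference point other than $(0,0)$, the argument is unchanged: (i) and (ii) together propagate a $\lambda$-uniform moment bound from any fixed point to all of $K_N$.)
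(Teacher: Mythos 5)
Your argument is correct, but note that the paper itself does not prove Theorem \ref{th: centsov}: it is imported verbatim from \cite[Prop.~2.3]{Yor} and used as a black box. So there is no internal proof to compare against; what you have written is a self-contained proof of the cited criterion along the standard Kolmogorov--Chentsov lines. The route is sound: hypothesis (ii) with $\delta>2$ (the dimension of the parameter set $[0,T]\times[-N,N]$) gives, via dyadic chaining or Garsia--Rodemich--Rumsey, a moment bound on the $\gamma$-H\"older seminorm over each $K_N$ whose constant depends only on $p,\delta,\gamma,C$ and $\mathrm{diam}\,K_N$, hence is uniform in $\lambda$; condition (i) anchors the process at $(0,0)\in K_N$ and upgrades this to a uniform bound on $\sup_{K_N}|X_\lambda|$; Chebyshev, a union bound in $N$, and the Arzel\`a--Ascoli characterization of compacts in $C([0,T]\times\R)$ with the uniform-on-compacts (Polish) topology then give tightness. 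One small slip to repair: the statement allows $p$, $p'$, $\delta$ and $C$ to depend on the compact $J$, so you cannot fix a single H\"older exponent $\gamma$ at the outset; you should take $\gamma_N\in\big(0,(\delta_N-2)/p_N\big)$ and $q_N=\min(p_N,p_N')$ separately for each $K_N$ in the definition of $\mathcal K_\varepsilon$. This changes nothing in the compactness argument, since equicontinuity is only needed on each $K_N$ separately, so it is a cosmetic fix rather than a gap. With that adjustment your proof is complete and, in effect, supplies the proof that the paper delegates to Yor.
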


\bigskip

We are now in position to state and prove the main result of this
section.

\begin{theorem}
    \label{th: convergence linear additive case}
    Consider a family $\{u^{H_n}\}_{n\geq 1}$ of solutions of equation $\eqref{eq: wave}$
    or \eqref{eq: heat}, and suppose that the Hurst indexes $H_n\to H_0\in (0,1)$, as $n\to\infty$.
    Then $u^{H_n}\xrightarrow{d}u^{H_0}$, as $n\to\infty$, where the convergence holds in distribution in the space
    $C([0,T]\times \R)$, where the latter is endowed with the metric of uniform convergence on
    compact sets.
\end{theorem}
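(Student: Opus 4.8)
The plan is to prove convergence in law by the standard two‑step scheme: establish tightness of the family $\{u^{H_n}\}_{n\ge 1}$ in $C([0,T]\times\R)$, and then identify the limit by showing that the finite‑dimensional distributions converge to those of $u^{H_0}$. Since the deterministic term $I_0(t,x)$ does not depend on $H$, everything reduces to the stochastic convolution $\tilde u^{H}(t,x):=\int_0^t\int_\R G_{t-s}(x-y)\,W^H(ds,dy)$, which is a centered Gaussian field. This is advantageous: convergence of finite‑dimensional distributions for centered Gaussian vectors is \emph{equivalent} to convergence of covariance matrices, so the identification step becomes a purely analytic computation of limits of certain integrals in $\xi$.

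For \textbf{tightness}, I would apply Theorem~\ref{th: centsov}. Condition (i) is immediate since $\tilde u^{H_n}(0,0)=0$. For condition (ii), using the isometry for the Wiener integral with respect to $W^{H_n}$,
\[
\E{|\tilde u^{H_n}(t',x')-\tilde u^{H_n}(t,x)|^2}
= c_{H_n}\int_0^{\infty}\!\!\int_\R \bigl|\F\bigl(G_{t'-\cdot}(x'-\cdot)1_{[0,t']}-G_{t-\cdot}(x-\cdot)1_{[0,t]}\bigr)(s,\xi)\bigr|^2 |\xi|^{1-2H_n}\,d\xi\,ds,
\]
and one splits the increment into a spatial part (handled by a factor $1-\cos(\xi(x-x'))$ and Lemma~\ref{lemma: 3.4}) and a temporal part (Lemma~\ref{lemma: 3.5}), exactly as in the bound quoted in the proof of Theorem~\ref{thm:1}. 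This gives
\[
\E{|\tilde u^{H_n}(t,x)-\tilde u^{H_n}(t',x')|^2}\le C_{H_n}\bigl(|t-t'|^{2\alpha_n}+|x-x'|^{2H_n}\bigr),
\]
with $\alpha_n=H_n$ (wave) or $\alpha_n=H_n/2$ (heat). Since $H_n\to H_0\in(0,1)$, the constants $C_{H_n}$ (which are explicit in $H$ through $c_H$, $\Gamma$‑factors, etc.) stay bounded, and the exponents stay bounded below away from $0$; then raising to a large power $p$ and using Gaussianity (so $\E{|\cdot|^p}\le C_p\E{|\cdot|^2}^{p/2}$) yields the required estimate with $\delta=p\min(\alpha_{\min},H_{\min})/2>2$ for $p$ large. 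The only care needed is uniformity in $n$: one fixes $\varepsilon>0$ with $[H_0-\varepsilon,H_0+\varepsilon]\subset(0,1)$, restricts to $n$ large so $H_n$ lies there, and checks the constants are uniformly bounded on that interval.

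For \textbf{identification of the limit}, I would show that for each finite collection of points $(t_i,x_i)$ the covariance matrix of $(\tilde u^{H_n}(t_i,x_i))_i$ converges to that of $(\tilde u^{H_0}(t_i,x_i))_i$. The $(i,j)$ entry is
\[
c_{H_n}\int_0^{t_i\wedge t_j}\!\!\int_\R \F G_{t_i-s}(\xi)\,\overline{\F G_{t_j-s}(\xi)}\;e^{-\mathbf{i}\xi(x_i-x_j)}\,|\xi|^{1-2H_n}\,d\xi\,ds,
\]
using the explicit $\F G_t$ from \eqref{eq:2}. Since $c_{H_n}\to c_{H_0}$ and $|\xi|^{1-2H_n}\to|\xi|^{1-2H_0}$ pointwise, convergence follows from dominated convergence once one produces an integrable dominating function valid for all $n$ large; near $\xi=0$ one bounds $|\xi|^{1-2H_n}\le |\xi|^{1-2(H_0+\varepsilon)}$ and uses $|\F G_t(\xi)|^2\lesssim 1$ (heat) or $\lesssim\min(t^2,|\xi|^{-2})$ (wave), while near $\xi=\infty$ one bounds $|\xi|^{1-2H_n}\le|\xi|^{1-2(H_0-\varepsilon)}$ and invokes the decay of $|\F G_t|^2$, the finiteness being precisely the content of Lemma~\ref{lemma: 3.1} for $\alpha$ in a neighbourhood of $1-2H_0\in(-1,1)$. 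Combining tightness with convergence of finite‑dimensional distributions gives $u^{H_n}\xrightarrow{d}u^{H_0}$ in $C([0,T]\times\R)$ by Prohorov's theorem.

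The \textbf{main obstacle} is the uniformity in $n$ of all estimates: one must verify that the $H$‑dependent constants arising from Lemmas~\ref{lemma: 3.1}--\ref{lemma: 3.5} (and from $c_H$) remain bounded as $H$ ranges over a small closed subinterval of $(0,1)$ around $H_0$, and that the dominating functions in the covariance limit can be chosen independently of $n$. This is where the explicit formulas in those lemmas are essential, since they show the constants depend continuously on $\alpha=1-2H$ on compact subsets of $(-1,1)$. Everything else is a routine application of Kolmogorov/Čentsov‑type tightness and the Gaussian identification of limits.
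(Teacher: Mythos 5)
Your proposal is correct and follows essentially the same route as the paper: tightness of the Gaussian stochastic convolutions via Theorem \ref{th: centsov}, with the $H$-dependent constants from Lemmas \ref{lemma: 3.1}--\ref{lemma: 3.5} and $c_H$ checked to be uniformly bounded for $H$ in a compact subinterval of $(0,1)$, followed by identification of the limit through dominated convergence of the covariance functions and the observation that adding $I_0$ is a continuous map. The only cosmetic difference is your choice of dominating function (splitting at $|\xi|=1$ with exponents $H_0\pm\varepsilon$ and invoking Lemma \ref{lemma: 3.1}), which handles the heat-equation case $t=t'$ slightly more uniformly than the paper's ad hoc bound, but the argument is the same in substance.
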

\begin{proof}
We split the proof in two steps. In the first one, we prove that the
sequence of stochastic convolutions is tight in $C([0,T]\times
\R)$, while the second step is devoted to the identification of the
limit law.

\medskip

 \textbf{Step $\mathbf{1}$:} Since $H_n\to H_0$, the sequence $\{H_n\}$ is  contained in a compact set $K\subset(0,1)$.
    For a fixed $H\in(0,1)$, we have that the solution $u^H$ is expressed as
    \begin{equation*}
    u^H(t,x)=I_0(t,x)+\int_{0}^{t}\int_{\R} G_{t-s}(x-y)W^H(ds,dy).
    \end{equation*}
    We will apply Theorem \ref{th: centsov} to the family $\{\tilde{u}^H=u^H-I_0\}_{H\in K}$
    of stochastic convolutions:
    $$\tilde{u}^H(t,x)=u^H(t,x)-I_0(t,x)=\int_{0}^{t}\int_{\R}G_{t-s}(x-y)W^H(dy,ds).$$
     We write then, supposing without loss of generality that $t'\geq t$ and $x'\geq x$:
    \begin{equation*}
    \begin{split}
    \tilde{u}^H(t',x') - \tilde{u}^H(t,x) = &\int_{t}^{t'}\int_\R G_{t'-s}(x'-y)W^H(ds,dy)\\
    & \quad + \int_{0}^{t}\int_\R [G_{t'-s}(x'-y) - G_{t-s}(x-y)]W^H(ds,dy).
    \end{split}
    \end{equation*}
Thus, we have
    $$\E{|u(t,x)-u(t',x')|^p}\leq C_p(I_1+I_2),$$
    where $I_1,I_2$ are defined as:
    \begin{equation*}
    \label{eq: first term}
    I_1:=\mathrm{E}\left[\Big|\int_{t}^{t'}\int_\R
    G_{t'-s}(x'-y)W^H(ds,dy)\Big|^p\right],
    \end{equation*}
    \begin{equation*}
    \label{eq: second term}
    I_2:=\mathrm{E}\left[\Big|\int_{0}^{t}\int_\R [G_{t-s}(x-y)-G_{t'-s}(x'-y)]W^H(ds,dy)
    \Big|^p\right].
    \end{equation*}
 Since  $I_1$ is the moment of order $p$ of a centered Gaussian random variable,
 we have
  \begin{equation}
    \label{eq: estimate burkholder type}
    \begin{split}
    I_1& = \mathrm{E}\left[\Big| \int_{0}^{T}\int_{\R} 1_{[t,t']}(s)G_{t'-s}(x'-y) W^H(ds,dy)\Big|^p\right] \\
    & = z_p \, c_H^{p/2}\left[\int_{0}^{T}1_{[t,t']}(s)\int_{\R}|\mathcal{F}G_{t'-s}(x'-\cdot)(\xi)|^2|\xi|^{1-2H}d\xi\,ds\right]^{p/2} \\
    & =   z_p\, c_H^{p/2}\left[\int_{t}^{t'}\int_{\R}|\mathcal{F}G_{t'-s}(x'-\cdot)(\xi)|^2|\xi|^{1-2H}d\xi\,ds\right]^{p/2} \\
    & =  z_p\, c_H^{p/2}\left[\int_{0}^{t'-t}\int_{\R}|\mathcal{F}G_{s'}(\xi)|^2|\xi|^{1-2H}d\xi\,ds'\right]^{p/2}.
    \end{split}
    \end{equation}
    Notice that we have used the standard properties of Fourier transform in the space variable, and we performed the change of variable $s'=t'-s$.
    The constant $z_p$ is the $p$-order moment of a standard normal distribution and
    $c_H$ is given by (\ref{eq: spectral measure and c_H}).

     Now we apply Lemma \ref{lemma: 3.1} and obtain
    \begin{equation}
    \label{eq: estimate for I_1}
    I_1\leq \begin{cases}
    z_p\, c_H^{p/2} \Big[ 2^{2H}\tilde{C}_{1-2H}\dfrac{1}{1+2H}(t'-t)^{1+2H}\Big]^{p/2}, & \text{wave equation,} \\
    \\
    z_p c_H^{p/2} \Big[\frac{1}{H}\Gamma(1-H)(t'-t)^H\Big]^{p/2}, & \text{heat equation.}
    \end{cases}
    \end{equation}
    The above constant $\tilde{C}_{1-2H}$ is the one of Lemma \ref{lemma: 3.1}:
    $$\tilde{C}_{1-2H}=\begin{cases}
    \dfrac{1}{2H}\Gamma(1-2H)\sin\Big(\pi\dfrac{1-2H}{2}\Big),& H\in\Big(0,\dfrac{1}{2}\Big),\\
    \\
    \dfrac{1}{1-2H}\dfrac{1}{2H}\Gamma(2-2H)\sin\Big(\pi\dfrac{1-2H}{2}\Big), & H\in\Big(\dfrac{1}{2},1\Big),\\
    \\
    \dfrac{\pi}{2}, & H=\dfrac{1}{2}.\\
    \end{cases}$$
    First, we observe that
    $z_p$ is independent of $H$ and
    $$c_H=\dfrac{\Gamma(2H+1)\sin(\pi H)}{2\pi}\leq \dfrac{\Gamma(3)}{2\pi}=\dfrac{1}{\pi}.$$
    Next, as far as estimate \eqref{eq: estimate for I_1} for the wave
equation is concerned, we note that $2^{2H}\leq 4$ and
$\frac{1}{1+2H}\leq 1$, for any $H\in(0,1)$. Thus, we concentrate on
the constant $\tilde{C}_{1-2H}$, which we show that it is uniformly bounded in $H$. Clearly, the function
    $\tilde{C}_{1-2H}:(0,1)\to\R$
    has, possibly, a singularity only in $H=\frac{1}{2}$, but since $\Gamma(x)\sim \frac{1}{x}$
    as $x\to 0_+$, by simple calculations we have that the function $\tilde{C}_{1-2H}$
    is continuous also at the point $H=\frac{1}{2}$. Therefore,
    $\tilde{C}_{1-2H}$ is bounded on the set $K$.

    On the other hand, regarding estimate \eqref{eq: estimate for I_1} for the heat equation,
    we have that $\dfrac{1}{H}\Gamma(1-H)$ defines a continuous function of $H$
    on the interval $(0,1)$, and thus it is bounded on $K$.

    We now turn to the analysis of the term $I_2$. More precisely, we have
    \begin{equation*}
    \begin{split}
    I_2 & = \mathrm{E}\left[\Big|\int_{0}^{T}\int_\R 1_{[0,t]}(s)[G_{t-s}(x-y)-G_{t'-s}(x'-y)]W^H(ds,dy)\Big|^p\right] \\
     & = z_p\, c_H^{p/2} \left[\int_{0}^{T}1_{[0,t]}(s)\int_{\R}\Big|\mathcal{F}\Big(G_{t-s}(x-\cdot)-G_{t'-s}(x'-\cdot)\Big)(\xi)\Big|^2|\xi|^{1-2H}d\xi\,ds\right]^{p/2} \\
    & = z_p\, c_H^{p/2} \left[\int_{0}^{t}\int_{\R}\Big|\mathcal{F}G_{t-s}(x-\cdot)(\xi)-\mathcal{F}G_{t'-s}(x'-\cdot)(\xi)\Big|^2|\xi|^{1-2H}d\xi\,ds\right]^{p/2}  \\
    & \leq  z_p\, c_H^{p/2}C_p\Bigg(\left[\int_{0}^{t}\int_{\R}\Big|\mathcal{F}G_{t'-s}(x'-\cdot)(\xi)-\mathcal{F}G_{t-s}(x'-\cdot)(\xi)\Big|^2|\xi|^{1-2H}d\xi\,ds\right]^{p/2} \\
    & \qquad + \left[\int_{0}^{t}\int_{\R}\Big|\mathcal{F}G_{t-s}(x'-\cdot)(\xi)-\mathcal{F}G_{t-s}(x-\cdot)(\xi)\Big|^2|\xi|^{1-2H}d\xi\,ds\right]^{p/2}  \Bigg)\\
    & =  z_p c_H^{p/2}C_p\Big(J_1+J_2\Big),
    \end{split}
    \end{equation*}
where $C_p$ denotes some constant depending on $p$. We estimate
$J_1$ and $J_2$ using similar techniques as those used for the term
$I_1$. Hence, via the change of variable $s'=t-s$, we have:
    $$J_1=\left[\int_{0}^{t}\int_{\R}\Big|\mathcal{F}G_{s'+(t'-t)}(x'-\cdot)(\xi)-\mathcal{F}G_{s'}(x'-\cdot)(\xi)\Big|^2|\xi|^{1-2H}d\xi\,ds'\right]^{p/2}.$$
     Thus, by Lemma \ref{lemma: 3.5},
    \begin{equation*}
    \label{eq: estimate of J_1}
    J_1 \leq \begin{cases}
    M_H^{p/2}\, t^{p/2}(t'-t)^{pH}\leq M_H^{p/2}\, T^{p/2}(t'-t)^{pH}, & \text{wave equation,} \\
    \\
    N_H^{p/2}(t'-t)^{pH/2}, & \text{heat equation.} \\
    \end{cases}
    \end{equation*}
    The above constants are the following:
    \begin{equation*}
    \begin{split}
    \frac14M_H  & = \int_{\R} \dfrac{\min(1,|h|^2)}{|h|^{1+2H}}dh \\
    & =\int_{|h|>1} \dfrac{1}{|h|^{1+2H}}dh+\int_{|h|<1} \dfrac{1}{|h|^{2H-1}}dh \\
    & = \dfrac{1}{H}+\dfrac{1}{1-H},
    \end{split}
    \end{equation*}
    and
    \begin{equation*}
    \begin{split}
    N_H & = \int_{\R} \dfrac{(1-e^{-\frac{h^2}{2}})^2}{|h|^{1+2H}}dh\le\int_{\R} \dfrac{1-e^{-\frac{h^2}{2}}}{|h|^{1+2H}}dh
    \\
     &  \leq  \int_{|h|>1} \dfrac{1}{|h|^{1+2H}}dh+\int_{|h|<1} \dfrac{1}{|h|^{2H-1}}dh \\
    & = \dfrac{1}{H}+\dfrac{1}{1-H}.
    \end{split}
    \end{equation*}
    The function $H\mapsto \dfrac{1}{H}+\dfrac{1}{1-H}$
    is again continuous in $(0,1)$, and thus bounded for $H\in K$.

    For the term $J_2$, we have:
    \begin{equation*}
    \begin{split}
    J_2 & = \left[\int_{0}^{t}\int_{\R}\Big|\mathcal{F}G_{t-s}(x'-\cdot)(\xi)-\mathcal{F}G_{t-s}(x-\cdot)(\xi)\Big|^2|\xi|^{1-2H}d\xi\,ds\right]^{p/2} \\
     & = \left[\int_{0}^{t}\int_{\R}[1-\cos(\xi(x'-x))]\Big| \mathcal{F}G_{s'}(x-\cdot)(\xi)\Big|^2|\xi|^{1-2H}d\xi\,ds'\right]^{p/2}, \\
    \end{split}
    \end{equation*}
    and applying Lemma \ref{lemma: 3.4} we end up with
    $$J_2\leq
    \begin{cases}
    C_H^{p/2}\, t^{p/2}(x'-x)^{pH}\leq C_H^{p/2}\, T^{p/2}(x'-x)^{pH}  ,& \text{wave equation,} \\
    \\
    C_H^{p/2}(x'-x)^{pH} ,& \text{heat equation.}
    \end{cases}
    $$
    Here, the constant $C_H$ is
    \begin{equation*}
    C_H =  \int_{\R}\dfrac{1-cos(h)}{|h|^{1+2H}}dh \leq \dfrac{1}{H}+\dfrac{1}{1-H},
    \end{equation*}
    which again is a bounded function on the set $K$.

    To sum up, we have proved that
\[
  \E{|\tilde{u}^H(t,x)-\tilde{u}^H(t',x')|^p} \leq C \Big( (t'-t)^{\alpha p} + (x'-x)^{p H}\Big),
 \]
 where $\alpha= H$ for the wave equation and $\alpha= \frac H2$ for the heat equation, and the constant
 $C$ depends only of $p$ and $T$.
    Thus, choosing $p>\dfrac{4}{\min_{H\in K} H}$,
    we have that the hypotheses of Theorem \ref{th: centsov} are
    fulfilled  by the family $\{\tilde{u}^H\}_{H\in K}$, for both the solution
    to \eqref{eq: wave} and \eqref{eq: heat}. This concludes the
    first step of the proof.

\medskip

    \textbf{Step $\mathbf{2}$:} In order to identify the limit law
    of the sequence $\{u^{H_n}\}_{n\geq 1}$, we proceed to prove the
    convergence of the finite dimensional distributions of $\tilde{u}^{H_n}$ when $n\to \infty$.

    We recall that, for every $H\in(0,1)$, $\tilde{u}^H=u^H-I_0$ is
a centered Gaussian process, so it suffices to analyze the
convergence of the corresponding covariance functions.

Let $(t,x),(t',x')\in [0,T]\times\mathbb R$ and suppose that $t'\geq
t$. Then,
    \begin{equation*}
    \label{eq: covariance DCT start}
    \E{ \tilde{u}^{H_n}(t,x) \tilde{u}^{H_n}(t',x') }
    = c_{H_n} \int_{0}^{t} \int_{\R} \F G_{t-s}(x-\cdot)(\xi)
    \overline{\F G_{t'-s}(x'-\cdot)(\xi)} \, |\xi|^{1-2H_n}
    d\xi\,ds.
    \end{equation*}
    Let us first consider the case of the wave equation. Taking into
    account the explicit form of $\F G_t(\xi)$ (see \eqref{eq:2}), we have
    \begin{equation*}
    \begin{split}
    \E{ \tilde{u}^{H_n}(t,x) \tilde{u}^{H_n}(t',x') } = c_{H_n}  \int_{0}^{t} \int_{\R}
    \dfrac{e^{-i\xi(x-x')}\sin((t-s)|\xi|)\sin((t'-s)|\xi|)}{|\xi|^{1+2H_{n}}}\,d\xi\,ds.
    \end{split}
    \end{equation*}
    We clearly have that $c_{H_n}\rightarrow c_{H_0}$.
    The integrand function in the latter integral converges, as $n\rightarrow \infty$, to
    \[
    \dfrac{e^{-i\xi(x-x')}\sin((t-s)|\xi|)\sin((t'-s)|\xi|)}{|\xi|^{1+2H_0}},
    \]
   for almost every $(s,\xi) \in[0,t]\times\R$. Moreover,
    thanks to the fact that $|\sin(z)|\leq z$ for all $z \in
    \R$, its modulus is dominated by the integrable function
    $$\begin{cases}
    \dfrac{(t-s)(t'-s)}{|\xi|^{2\sup_n(H_{n})-1}}, &  s\in [0,t], |\xi|\leq 1, \\
    \\
    \dfrac{1}{|\xi|^{2\inf_n(H_{n})+1}}, &  s\in[0,t], |\xi|>1.
    \end{cases}$$
Then, by the dominated convergence theorem, we obtain that
\begin{align*}
\lim_{n\rightarrow \infty} \E{ \tilde{u}^{H_n}(t,x)
\tilde{u}^{H_n}(t',x') } & = c_{H_0} \int_{0}^{t} \int_{\R}
\dfrac{e^{-i\xi(x-x')}\sin((t-s)|\xi|)\sin((t'-s)|\xi|)}{|\xi|^{1+2H_0}}\,
d\xi\,ds \\
& = \E{ \tilde{u}^{H_0}(t,x) \tilde{u}^{H_0}(t',x')}.
\end{align*}

   On the other hand, in the case of the heat equation, we have
    \begin{equation}
    \label{heat covariance}
    \begin{split}
    \E{ \tilde{u}^{H_n}(t,x)
\tilde{u}^{H_n}(t',x') } & = c_{H_n} \int_{0}^{t} \int_{\R} \dfrac{
e^{-i\xi(x-x')} e^{-\frac{(t-s)\xi^2}{2}}
e^{-\frac{(t'-s)\xi^2}{2}}}{|\xi|^{2H_{n}-1}}\, d\xi\,ds.
    \end{split}
    \end{equation}
     The pointwise limit of the above integrand is given by
     \[
     \dfrac{
e^{-i\xi(x-x')} e^{-\frac{(t-s)\xi^2}{2}}
e^{-\frac{(t'-s)\xi^2}{2}}}{|\xi|^{2H_{0}-1}},
\]
 for all $s\in[0,t]$ and $\xi\in\R$, and its modulus reads
    \begin{equation*}
    \dfrac{e^{-\frac{(t+t'-2s)\xi^2}{2}}}{|\xi|^{2H_{n}-1}}.
    \end{equation*}
    Now, we use the bound
    $$e^{-ax^2}<\frac{1}{ax^2}, \quad \text{if }a>0,$$
    with $a=(t+t'-2s)/2$ (which is always positive provided that $s\in [0,t]$).
    Thus
    $$\dfrac{e^{-\frac{(t+t'-2s)\xi^2}{2}}}{|\xi|^{2H_{n}-1}}\leq \begin{cases}
    \dfrac{1}{|\xi|^{2\sup_n(H_n)-1}}, & |\xi|\leq 1, \ s\in [0,t], \\
    \\
    \dfrac{2}{(t'-t)|\xi|^{2\inf_n(H_n)+1}}, &  |\xi|>1, \  s\in [0,t].
    \end{cases}$$
    This covers all cases except $t=t'$. In this latter case,
    the modulus of the integrand appearing in
    \eqref{heat covariance} becomes
    $$\dfrac{e^{-(t-s)\xi^2}}{|\xi|^{2H_{n}-1}} \leq \begin{cases}
    \dfrac{1}{|\xi|^{2\sup_n(H_n)-1}}, & |\xi|\leq 1, \ s\in [0,t], \\
    \\
    \dfrac{\exp\Big(-(t-s)\xi^2\Big)}{|\xi|^{2\inf_n(H_n)-1}}, &  |\xi|>1, \  s\in
    [0,t],
    \end{cases}$$
    and the integrability of this function is an easy consequence of Lemma \ref{lemma: 3.1}.
    Therefore, by the dominated convergence theorem, we also obtain that
    \[
\lim_{n\rightarrow \infty} \E{ \tilde{u}^{H_n}(t,x)
\tilde{u}^{H_n}(t',x') } = \E{ \tilde{u}^{H_0}(t,x)
\tilde{u}^{H_0}(t',x')},
\]
which concludes Step 2 of the proof.

   To finish the proof of the theorem, it remains to observe that,
   since the translation by $I_0$ is clearly a continuous mapping from
   $C([0,T]\times \R)$ into itself, the convergence in distribution
   $\tilde{u}^{H_n}\xrightarrow{d}\tilde{u}^{H_0}$ implies the convergence in distribution
   $u^{H_n}\xrightarrow{d}u^{H_0}$, which was our statement.
\end{proof}


\section{Quasi-linear additive case: existence of solution}
\label{ch: quasi-linear additive}

In this section, we consider equations \eqref{eq: wave} and
\eqref{eq: heat} with a general drift coefficient $b$, where we
assume that $b:\R\to\R$ is a globally Lipschitz function. Let $T>0$.
Owing to  \eqref{def: mild solution}, we recall that a solution to
these equations is an adapted and jointly measurable process
$\{u^H(t,x), \, (t,x)\in [0,T]\times \R\}$ such that, for all
$(t,x)\in[0,T]\times \R$,
    \begin{equation}
    \begin{split}
    u^H(t,x)= & I_0(t,x)+ \int_{0}^{t}\int_{\R} G_{t-s}(x-y) W^H(ds,dy)  \\
    + &  \int_{0}^{t}\int_\R b(u^H(s,y))G_{t-s}(x-y) dy\,ds, \quad
    \mathbb{P}\text{-a.s.},
    \end{split}
    \label{eq: mild formulation quasi-linear additive}
    \end{equation}
    where the term $I_0$ and the fundamental solution $G$ are
    specified in \eqref{eq:0} and \eqref{eq:1}, respectively.

If $H>\frac12$, the existence of a unique solution to \eqref{eq:
mild formulation quasi-linear additive} follows from \cite[Thm.
4.3]{rev}, assuming that the term $I_0$ satisfies
\[
    \sup_{(t,x)\in [0,T]\times \R} |I_0(t,x)|<\infty.
\]
The case $H=\frac12$ was considered in \cite{walsh}. Finally, we
have not been able to find a proof of existence in the case
$H<\frac12$. This section is devoted to present a proof of existence
and uniqueness of solution to \eqref{eq: mild formulation
quasi-linear additive} which holds for any $H\in (0,1)$ (cf. Theorem
\ref{th: existence and uniqueness b Lipschitz}). Furthermore, we
provide sufficient conditions on the initial data ensuring that the
solution admits a H\"older-continuous version (cf. Theorem
\ref{thm:5} below).

Along this section, we will require more restrictive conditions for
the initial conditions $u_0$ and $v_0$ in the case of the wave
equation. Concretely, we consider the following assumption:

\medskip

\noindent {\bf{Hypothesis B:}} It holds that
\begin{itemize}
 \item[(a)] {\it{Wave equation}}: $u_0$ and $v_0$ are $H$-H\"older continuous and bounded.
 \item[(b)] {\it{Heat equation}}:  $u_0$ is $H$-H\"older continuous and bounded.
\end{itemize}

\medskip

Moreover, we recall that we are considering the filtration
$(\F^H_t)_{t\geq 0}$ which is generated by our fractional noise
$W^H$ (see \eqref{eq:24}).

\begin{theorem}
    \label{th: existence and uniqueness b Lipschitz}
    Let $p\geq 2$ and assume that Hypothesis B is satisfied. Then, equation
    \eqref{eq: mild formulation quasi-linear additive} has a unique solution $u^H$ in the space
    of $L^2(\Omega)$-continuous and adapted stochastic processes
    satisfying
    $$\sup_{(t,x)\in [0,T]\times\R} \E{|u^H(t,x)|^p}<\infty.$$
\end{theorem}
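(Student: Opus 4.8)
The plan is to prove existence and uniqueness via the classical Picard iteration scheme, carried out in the Banach space of adapted, $L^2(\Omega)$-continuous processes $u$ with finite norm $\|u\|_{p,T} := \sup_{(t,x)\in[0,T]\times\R}\bigl(\E{|u(t,x)|^p}\bigr)^{1/p}$. First I would set up the iteration: put $u_0^H(t,x) := I_0(t,x) + \int_0^t\int_\R G_{t-s}(x-y)W^H(ds,dy)$, and for $n\ge 0$ define
\[
u_{n+1}^H(t,x) := I_0(t,x) + \int_0^t\int_\R G_{t-s}(x-y)W^H(ds,dy) + \int_0^t\int_\R b(u_n^H(s,y))G_{t-s}(x-y)\,dy\,ds.
\]
The first two terms are fixed; by Hypothesis B (which gives $u_0,v_0$ bounded, hence $\sup|I_0|<\infty$) together with Remark \ref{rmk:1} and Remark \ref{rmk:2}, the base process $u_0^H$ is adapted, $L^2(\Omega)$-continuous, and satisfies $\|u_0^H\|_{p,T}<\infty$. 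I would then check inductively that each $u_n^H$ stays in this space: adaptedness is clear because the deterministic convolution only uses values $u_n^H(s,y)$ with $s\le t$; $L^2$-continuity follows from continuity of $(t,x)\mapsto G_{t-s}(x-y)$ in the relevant sense plus dominated convergence; and the uniform $p$-th moment bound uses the global Lipschitz (hence linear growth) property of $b$ together with the fact that $\int_0^T\!\int_\R G_s(y)\,dy\,ds<\infty$ for both the wave and heat kernels, via Minkowski's integral inequality.

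The core estimate is the contraction-type bound. Writing $D_n(t,x) := \E{|u_{n+1}^H(t,x)-u_n^H(t,x)|^p}$, the stochastic-convolution terms cancel, so
\[
u_{n+1}^H(t,x)-u_n^H(t,x) = \int_0^t\int_\R \bigl(b(u_n^H(s,y))-b(u_{n-1}^H(s,y))\bigr)G_{t-s}(x-y)\,dy\,ds.
\]
Applying Minkowski's integral inequality in $L^p(\Omega)$, then Hölder's inequality in the $(s,y)$ integral against the finite measure $G_{t-s}(x-y)\,dy\,ds$ (here I use $\int_\R G_r(y)\,dy = r$ for the wave kernel and $=1$ for the heat kernel, both bounded on $[0,T]$), and finally the Lipschitz property of $b$, I get
\[
D_n(t,x) \le C_{p,T}\int_0^t \sup_{z\in\R}\E{|u_n^H(s,z)-u_{n-1}^H(s,z)|^p}\,ds.
\]
Setting $\phi_n(t) := \sup_{z\in\R}D_n(t,z)$ this yields $\phi_n(t)\le C_{p,T}\int_0^t\phi_{n-1}(s)\,ds$, and iterating gives $\phi_n(t)\le (C_{p,T}t)^n\phi_0(T)/n!$, which is summable. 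Hence $\{u_n^H\}$ is Cauchy in $\|\cdot\|_{p,T}$, so it converges to a limit $u^H$ in that norm; $u^H$ inherits adaptedness and the uniform $p$-th moment bound, and $L^2(\Omega)$-continuity passes to the limit because the convergence is uniform in $(t,x)$. Passing to the limit in the Picard recursion (the drift term converges by the same Lipschitz-plus-Minkowski estimate) shows $u^H$ solves \eqref{eq: mild formulation quasi-linear additive}.

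For uniqueness, if $u^H$ and $v^H$ are two solutions in the stated space, the same cancellation and Gronwall argument applied to $\psi(t) := \sup_{z\in\R}\E{|u^H(t,z)-v^H(t,z)|^p}$ gives $\psi(t)\le C_{p,T}\int_0^t\psi(s)\,ds$ with $\psi$ bounded on $[0,T]$, so Gronwall's lemma forces $\psi\equiv 0$, i.e. $u^H(t,x)=v^H(t,x)$ $\mathbb{P}$-a.s. for every $(t,x)$, and by $L^2$-continuity the two processes are indistinguishable modulifications of each other. I expect the main (though still fairly routine) obstacle to be the bookkeeping that keeps everything uniform in $x\in\R$ — since the spatial domain is the whole line, one must work throughout with $\sup_{x\in\R}$ rather than an $L^p$-in-space norm, and this is precisely why the finiteness of $\sup_{(t,x)}|I_0(t,x)|$ under Hypothesis B, and the spatial translation-invariance of the kernel $G$, are used at each step; the boundedness of $\int_0^T\!\int_\R G_s(y)\,dy\,ds$ and of $\|u_0^H\|_{p,T}$ from Remarks \ref{rmk:1}–\ref{rmk:2} are the only genuinely analytic inputs needed.
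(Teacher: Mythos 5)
Your proposal is correct and follows essentially the same route as the paper: a Picard iteration in the Banach space of adapted, $L^2(\Omega)$-continuous processes with finite $\sup_{(t,x)}\E{|\cdot|^p}^{1/p}$-norm, a contraction estimate via Minkowski's inequality and the Lipschitz property of $b$, a Gr\"onwall-type iteration giving factorial decay, and the same limit-passage and uniqueness arguments. The only difference is one of bookkeeping: the paper isolates the $L^2(\Omega)$-continuity and the uniform $L^p$-boundedness of the iterates into two dedicated lemmas (proved by induction on the increments), whereas you sketch these inductive verifications inline.
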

\begin{proof}
    We follow similar arguments as those used in \cite{extend}. We
    split the proof in four parts.

    \medskip

    \textbf{Step $\mathbf{1}$:}
    We define the following Picard iteration scheme. For $n=0$, we set
    \begin{equation}
    \label{eq: Picard iteration - 1}
    u^H_0(t,x):=I_0(t,x)+\int_{0}^{t}\int_{\R} G_{t-s}(x-y) W^H(ds,dy),
    \end{equation}
    and for $n\geq 1$ we define
    \begin{equation}
    \label{eq: Picard iteration - 2 }
    \begin{split}
    u^H_n(t,x):=u^H_0(t,x)+\int_{0}^{t} \int_\R G_{t-s}(x-y) b(u^H_{n-1}(s,y))dy\,ds .
    \end{split}
    \end{equation}
    Clearly, the process $u^H_0$ is adapted and, by step 1 in Section \ref{subsec: b unbounded}, it is $L^2(\Omega)$-continuous. Then,
    $u_0^H$ admits a jointly measurable modification (cf. \cite[Prop.
    B.1]{measure}), which will be denoted in the same way.

Owing to Lemma \ref{lemma: L^2(Omega)-continuity}, we
obtain that, for every $n\geq 0$, the Picard iteration $u^H_n$ is
$L^2(\Omega)$-continuous, and thus has a jointly measurable
modification. Moreover, by Lemma \ref{lemma: uniform boundedness in L^p(Omega)}
below, $u^H_n$ is uniformly bounded in $L^p(\Omega)$, i.e.
    $$\sup_{(t,x)\in [0,T]\times\R}\E{|u_n^H(t,x)|^p}< \infty.$$
The above two facts imply that $u^H_n$ is well-defined, for all
$n\geq 0$. On the other hand, it is clear that any Picard iteration
defines an adapted process.

    \medskip

    \textbf{Step $\mathbf{2}$:} We prove that the Picard iteration
    scheme converges in the space of $L^2(\Omega)$-continuous, adapted and $L^p(\Omega)$-uniformly bounded
    processes, which is a complete normed space when endowed with the norm
    $$||u^H||_p=\sup_{(t,x)\in [0,T]\times\R}\Big(\E{|u^H(t,x)|^p}\Big)^{1/p}.$$
    Indeed, it can be seen as the closed subset formed by adapted process of the space
    $$L^\infty([0,T]\times \R; L^p(\Omega)),$$
    which is a Banach space for any $p\geq 2$.

    Then, it is sufficient to show that the sequence of Picard iterations is Cauchy with respect
    to $||\cdot||_p$ to infer the existence of a limit.

We use that $b$ is Lipschitz and Minkowski inequality for integrals
to obtain
    \begin{equation*}
    \begin{split}
    & \Big(\E{|u^H_{n+1}(t,x)-u^H_n(t,x)|^p}\Big)^{1/p}\\
     & \quad = \Big(\E{\Big|  \int_{0}^{t}\int_{\R} G_{t-s}(x-y)[b(u^H_n(s,y))-b(u^H_{n-1}(s,y))]dy\,ds  \Big|^p}\Big)^{1/p} \\
    & \quad \leq C\Big(\E{\Big|\int_{0}^{t}\int_{\R} G_{t-s}(x-y) |u^H_{n}(s,y)-u^H_{n-1} (s,y)| dy\,ds  \Big|^p}\Big)^{1/p} \\
     & \quad \leq C \int_{0}^{t}\int_{\R}\Big(\E{G_{t-s}(x-y)^p|u^H_{n}(s,y)-u^H_{n-1} (s,y)|^p}\Big)^{1/p}dy\,ds \\
    & \quad \leq  C \int_{0}^{t}\int_{\R}G_{t-s}(x-y) \sup_{\substack{ y\in\R, \\ s'\in[0,s]}}\Big(\E{|u^H_{n}(s',y)-u^H_{n-1} (s',y)|^p}\Big)^{1/p}dy\,ds \\
    & \quad = C\int_{0}^{t} \sup_{\substack{ y\in\R, \\ s'\in[0,s]}}\Big(\E{|u^H_{n}(s',y)-u^H_{n-1}
    (s',y)|^p}\Big)^{1/p}ds.
    \end{split}
    \end{equation*}

    This inequality implies that
    \begin{equation*}
    \label{eq: bound for the sup}
    \begin{split}
    \sup_{\substack{ x\in\R, \\s\in[0,t]}} & \Big(\E{|u^H_{n+1}(s,x)-u^H_n(s,x)|^p}\Big)^{1/p}
    \leq C \int_{0}^{t} \sup_{\substack{ y\in\R, \\ s'\in[0,s]}}\Big(\E{|u^H_{n}(s',y)-u^H_{n-1} (s',y)|^p}\Big)^{1/p} ds
    \end{split}
    \end{equation*}
   If we define
    $$f_n(t):=\sup_{\substack{ x\in\R, \\s\in[0,t]}}\Big(\E{|u^H_{n+1}(s,x)-u^H_n(s,x)|^p}\Big)^{1/p},$$
    we have that
    $$f_{n}(t)\leq C\int_{0}^{t} f_{n-1}(s)ds.$$
Then, by Gr\"onwall lemma, we can conclude that $\{u^H_n\}_{n\geq
0}$ defines a Cauchy sequence in the underlying space, and therefore
it converges to a limit $u^H$, namely
\[
 \lim_{n\rightarrow \infty} \sup_{(t,x)\in [0,T]\times \R}
 \E{|u^H_n(t,x)-u^H(t,x)|^p}=0.
\]
Since any $u^H_n$ is $L^2(\Omega)$-continuous and adapted, $u^H$ has
the same properties. In particular, $L^2(\Omega)$-continuity
 implies the existence of a joint-measurable version of $u^H$.

 \medskip

    \textbf{Step $\mathbf{3}$:}  We check that the process $u^H$ is a solution of
    \eqref{eq: mild formulation quasi-linear additive}. To do this, we take $n\rightarrow
    \infty$
    with respect to the uniform $L^p(\Omega)$-norm
    in the expression
    $$u_{n+1}^H(t,x)=u_0^H(t,x)+\int_{0}^{t}\int_{\R} G_{t-s}(x-y)b(u_n^H(s,y))dy\,ds.$$
    The left-hand side, by its definition, converges to $u^H$, while for the non-constant (with respect to $n$)
    part of the right-hand side, we argue as follows:
    \begin{equation*}
    \begin{split}
    &\Big(\E{ \Big| \int_{0}^{t} \int_\R G_{t-s}(x-y) (b(u^H_{n}(s,y))-b(u^H(s,y))) dy\,ds \Big|^p }\Big)^{1/p} \\
    & \quad \leq  C \Big(\E{ \Big| \int_{0}^{t} \int_\R G_{t-s}(x-y) |u^H_{n}(s,y)-u^H(s,y)|dy\,ds \Big|^p }\Big)^{1/p} \\
    & \quad \leq  C  \int_{0}^{t}\int_{\R} G_{t-s}(x-y) \Big(\E{|u^H_{n}(s,y)-u^H(s,y)|^p}\Big)^{1/p}dy\,ds \\
    & \quad \leq C \int_{0}^{t}  \sup_{(s,y)\in [0,T]\times\R}\Big(\E{|u^H_{n}(s,y)-u^H(s,y)|^p}\Big)^{1/p}ds \\
    & \quad \leq C \sup_{(s,y)\in [0,T]\times\R} \Big(\E{|u^H_{n}(s,y)-u^H(s,y)|^p}\Big)^{1/p}.
    \end{split}
    \end{equation*}
    We note that the latter term converges to zero as $n\rightarrow
    \infty$. Thus, we have that $u^H$ satisfies \eqref{eq: mild formulation quasi-linear additive}.

    \medskip

    \textbf{Step $\mathbf{4}$:} Uniqueness can be checked by using analogous arguments as those used
    in the previous steps.
\end{proof}

\medskip

We have the following property of the sample paths of the solution
$u^H$.
\begin{theorem}\label{thm:5}
    Let $p\geq 2$. Assume that Hypothesis B is fulfilled. Let $u^H$ be the solution of
    \eqref{eq: mild formulation quasi-linear additive}. Then, for any $t,t'\in [0,T]$ and $x,x'\in \R$
    such that $|t'-t|\leq 1$ and $|x'-x|\leq 1$, the following inequalities hold true:
    \begin{equation}
    \label{eq: bound kolmogorov time}
    \sup_{x\in\R}\E{|u^H(t',x)-u^H(t,x)|^p}\leq C_p|t'-t|^{\gamma p}
    \end{equation}
    and
    \begin{equation}
    \label{eq: bound kolmogorov space}
    \sup_{t\in[0,T]}\E{|u^H(t,x')-u^H(t,x)|^p}\leq C_p|x'-x|^{Hp},
    \end{equation}
    where $\gamma=H$ for the wave equation and $\gamma=\frac H2$ for the heat equation.
    Hence, the process $u^H$ has a modification
    whose trajectories are almost surely $\gamma'$-H\"older continuous in
    time, for all $\gamma'<\gamma$, and $H'$-H\"older continuous in
    space for all $H'<H$.
\end{theorem}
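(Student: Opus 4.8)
The plan is to write the solution as $u^H = I_0 + Z^H + D^H$, where $Z^H(t,x):=\int_0^t\int_\R G_{t-s}(x-y)\,W^H(ds,dy)$ is the stochastic convolution and $D^H(t,x):=\int_0^t\int_\R G_{t-s}(x-y)\,b(u^H(s,y))\,dy\,ds$ is the drift contribution, and to estimate the increments of the three summands separately. For $Z^H$, the bound $\E{|Z^H(t',x')-Z^H(t,x)|^p}\le C(|t'-t|^{\gamma p}+|x'-x|^{Hp})$ is exactly the one established in Step 1 of the proof of Theorem \ref{th: convergence linear additive case}. For the deterministic term $I_0$, I would check directly from Hypothesis B that $|I_0(t',x)-I_0(t,x)|\le C|t'-t|^{\gamma}$ and $|I_0(t,x')-I_0(t,x)|\le C|x'-x|^{H}$ for $|t'-t|,|x'-x|\le 1$: for the wave equation this follows from the explicit formula \eqref{eq:0}, using that $u_0,v_0$ are bounded and $H$-H\"older (the $v_0$-integral term is in fact Lipschitz in $(t,x)$, which is more than enough since $H<1$); for the heat equation one writes $I_0(t,\cdot)=G_t*u_0$, transfers the $H$-H\"older bound of $u_0$ to $G_t*u_0$ uniformly in $t$ to handle the space increment, and uses the semigroup identity $G_{t'}=G_{t'-t}*G_t$ together with the same H\"older bound to get the $|t'-t|^{H/2}$ estimate in time.

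The substantive step is the control of the increments of $D^H$. Since $b$ is Lipschitz and, by Theorem \ref{th: existence and uniqueness b Lipschitz}, $\sup_{(s,y)\in[0,T]\times\R}\E{|u^H(s,y)|^p}<\infty$, the quantity $C_b:=\sup_{(s,y)}(\E{|b(u^H(s,y))|^p})^{1/p}$ is finite. For the time increment with $t'\ge t$ I would split $D^H(t',x)-D^H(t,x)$ into $\int_t^{t'}\!\int_\R G_{t'-s}(x-y)b(u^H(s,y))\,dy\,ds$ and $\int_0^t\!\int_\R[G_{t'-s}(x-y)-G_{t-s}(x-y)]b(u^H(s,y))\,dy\,ds$, and apply Minkowski's inequality for integrals in $L^p(\Omega)$ to bound $\bigl(\E{|D^H(t',x)-D^H(t,x)|^p}\bigr)^{1/p}$ by $C_b\bigl(\int_t^{t'}\|G_{t'-s}\|_{L^1(\R)}\,ds+\int_0^t\|G_{t'-s}-G_{t-s}\|_{L^1(\R)}\,ds\bigr)$; the analogous splitting for the space increment leaves $C_b\int_0^t\|G_{t-s}(\cdot-(x'-x))-G_{t-s}(\cdot)\|_{L^1(\R)}\,ds$. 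It then suffices to estimate the $L^1$-moduli of continuity of the fundamental solution. For the wave equation $G_r=\tfrac12 1_{(-r,r)}$, so $\|G_{r+h}-G_r\|_{L^1}=h$ and $\|G_r(\cdot-a)-G_r\|_{L^1}\le|a|$, giving $ds$-integrals of order $|t'-t|$ and $|x'-x|$. For the heat equation, a scaling change of variables reduces $\|G_{r+h}-G_r\|_{L^1}$ to $\psi(h/r)$ and $\|G_r(\cdot-a)-G_r\|_{L^1}$ to $\widetilde\psi(|a|/\sqrt r)$, where $\psi$ and $\widetilde\psi$ are bounded scalar functions vanishing at $0$ and Lipschitz near $0$; hence $\|G_{r+h}-G_r\|_{L^1}\le C\min(1,h/r)$ and $\|G_r(\cdot-a)-G_r\|_{L^1}\le C\min(1,|a|/\sqrt r)$, and integrating in $s$ gives $\int_0^t\min(1,h/r)\,dr\le Ch(1+\log(T/h))$ and $\int_0^t\min(1,|a|/\sqrt r)\,dr\le C|a|$ with $h=|t'-t|$, $a=x'-x$. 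Since $\gamma\le H<1$, for $|t'-t|,|x'-x|\le 1$ all these quantities are dominated by $|t'-t|^{\gamma}$ and $|x'-x|^{H}$ respectively (the logarithmic loss is absorbed because $\gamma<1$); raising to the power $p$ and combining with the estimates for $I_0$ and $Z^H$ yields \eqref{eq: bound kolmogorov time} and \eqref{eq: bound kolmogorov space}.

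Finally, I would deduce the sample-path regularity from the triangle inequality $\E{|u^H(t',x')-u^H(t,x)|^p}\le C_p(|t'-t|^{\gamma p}+|x'-x|^{Hp})$ and, choosing $p$ so large that $\gamma p>1$ and $Hp>1$, apply an anisotropic two-parameter version of Kolmogorov's continuity criterion, which provides a modification that is a.s. H\"older of order $\gamma'$ in time for every $\gamma'<\gamma-\tfrac1p$ and of order $H'$ in space for every $H'<H-\tfrac1p$; letting $p\to\infty$ (the estimates hold for all $p\ge 2$) gives the stated exponents. The main obstacle I anticipate is the bookkeeping around the heat kernel's $L^1$-modulus of continuity in the time variable — specifically the logarithmic factor in $\int_0^t\min(1,h/r)\,dr$, which must be absorbed using $\gamma<1$ — together with checking that the contributions of $I_0$, $Z^H$ and $D^H$ combine to exactly the exponents $\gamma$ and $H$; the genuinely probabilistic input, namely the increment bound for the stochastic convolution $Z^H$, is already available from Section \ref{ch: linear additive}.
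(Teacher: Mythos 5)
Your proposal is correct, but the route through the drift term is genuinely different from the paper's. The paper proves Theorem \ref{thm:5} simply by pointing to Step 1 of Section \ref{subsec: b unbounded}: there the increments of $u^H$ are split, as you do, into the $I_0$ part (estimated via \cite[Thm.~3.7]{1/4<H<1/2}), the stochastic convolution (estimated exactly as you propose, via Step 1 of Theorem \ref{th: convergence linear additive case}), and the drift part; but the drift increment is not estimated directly. Instead, after a change of variables it is bounded by $\int_0^t \sup_y \E{|u^H(s+h,y)-u^H(s,y)|^p}\,ds$ plus a boundary term of order $|t'-t|^p$, and the resulting self-referential inequality for $\sup_x\E{|u^H(t+h,x)-u^H(t,x)|^p}$ is closed by Gr\"onwall's lemma (this is where the Lipschitz property of $b$, and not just its linear growth, is used). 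You instead bound the drift increment directly, using only the uniform moment bound $\sup_{(s,y)}\E{|b(u^H(s,y))|^p}<\infty$ (available from Theorem \ref{th: existence and uniqueness b Lipschitz} or Lemma \ref{lemma: uniform boundedness in L^p(Omega)}) together with the $L^1$-moduli of continuity of the wave and heat kernels; this yields drift increments of order $|t'-t|$ (up to a logarithmic factor for the heat kernel in time) and $|x'-x|$, which indeed dominate $|t'-t|^{\gamma}$ and $|x'-x|^{H}$ since $\gamma\le H<1$. Both arguments work: the paper's Gr\"onwall scheme has the advantage of giving the estimates uniformly in $H$ on compact subsets of $(0,1)$, which is what is actually needed later for tightness, while your argument is more elementary, avoids the self-bounding bookkeeping, and makes transparent that the drift term is strictly smoother than the noise term. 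Two minor points: your wave-equation estimate for $I_0$ uses the boundedness of $v_0$, which Hypothesis B does provide but should be invoked explicitly; and in the final anisotropic Kolmogorov step the exponent loss is not exactly $1/p$ (in the usual metric-entropy formulation it is of order $(1/\gamma+1/H)/p$), but this is immaterial since the bounds hold for all $p\geq 2$ and you let $p\to\infty$.
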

\begin{proof}
    The bounds \eqref{eq: bound kolmogorov time} and \eqref{eq: bound kolmogorov space}
   are an easy corollary of the stronger results obtained in step 1 of Section \ref{subsec: b unbounded}. Indeed, in that theorem, the same kind of estimates
   have been obtained uniformly with respect to the Hurst index $H$, when restricted on a compact set
   $[a,b]\subset(0,1)$. Nevertheless, here we need to obtain \eqref{eq: bound kolmogorov time} and \eqref{eq: bound kolmogorov space}
   only for a fixed $H\in (0,1)$.
\end{proof}

\medskip

In order to conclude this section, we state and prove the two lemmas
that we used in step 1 of the proof of Theorem \ref{th: existence
and uniqueness b Lipschitz} above.

\begin{lemma}
    \label{lemma: L^2(Omega)-continuity}
    For each $n\geq 0$, the process $u^H_n$ defined by \eqref{eq: Picard iteration - 1}
    and \eqref{eq: Picard iteration - 2 } satisfies the following.
    There exists a constant $C=C(n,H)$ such that, for any $t\in
    [0,T]$ and $h\in \R$ with $t+h\leq T$, it holds
    \begin{equation}
    \sup_{x\in \R} \E{|u_n^{H}(t+h,x)-u_n^H(t,x)|^2}\leq\begin{cases}
    Ch^{\min(2H,1)}, & \text{wave equation,} \\
    Ch^{H}, &  \text{ heat equation.}
    \end{cases}
    \label{eq:6}
    \end{equation}
    and, for any $x\in \R$ and $h\in \R$ with $|h|<1$,
    \begin{equation}
    \sup_{t\in [0,T]} \E{|u_n^{H}(t,x+h)-u_n^H(t,x)|^2}\leq Ch^{2H}.
    \label{eq:7}
    \end{equation}
    In particular, the process $u^H_n$ is $L^2(\Omega)$-continuous.
\end{lemma}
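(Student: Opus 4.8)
The natural approach is to argue by induction on $n$, writing each Picard iterate as $u_n^H = u_0^H + D_n$, where $u_0^H = I_0 + \Phi^H$ with $\Phi^H(t,x):=\int_0^t\int_\R G_{t-s}(x-y)\,W^H(ds,dy)$ the stochastic convolution, and $D_n(t,x):=\int_0^t\int_\R G_{t-s}(x-y)\,b(u_{n-1}^H(s,y))\,dy\,ds$ for $n\ge 1$ (with $D_0\equiv 0$). For the base case $n=0$, the mean-square increment bounds for $\Phi^H$ are exactly those obtained in Step~1 of the proof of Theorem~\ref{th: convergence linear additive case}, specialized to $p=2$ and to a single fixed $H$: the relevant second moments are controlled by $A_{t'-t}(1-2H)$ and by the quantities estimated in Lemmas~\ref{lemma: 3.4} and~\ref{lemma: 3.5} with $\alpha=1-2H\in(-1,1)$, yielding $\E{|\Phi^H(t+h,x)-\Phi^H(t,x)|^2}\le C\,h^{2H}$ for the wave equation, $\le C\,h^{H}$ for the heat equation, and $\E{|\Phi^H(t,x+h)-\Phi^H(t,x)|^2}\le C\,h^{2H}$ in both cases; since $h^{2H}\le h^{\min(2H,1)}$ for $|h|\le1$, these match the right-hand sides of \eqref{eq:6}--\eqref{eq:7}. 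For the deterministic part $I_0$, Hypothesis~B supplies the needed regularity: for the wave equation the explicit formula \eqref{eq:0} gives directly $|I_0(t+h,x)-I_0(t,x)|\le C(|h|+|h|^H)$ (the $|h|$ from the boundedness of $v_0$, the $|h|^H$ from the $H$-H\"older continuity of $u_0$) and $|I_0(t,x+h)-I_0(t,x)|\le C|h|^H$; for the heat equation one uses the semigroup identity $G_{t+h}=G_t*G_h$ together with the fact that $I_0(t,\cdot)=G_t*u_0$ is $H$-H\"older in space uniformly in $t$, so that $|I_0(t+h,x)-I_0(t,x)|=|\int_\R G_h(z)(I_0(t,x-z)-I_0(t,x))\,dz|\le C\,h^{H/2}$ and $|I_0(t,x+h)-I_0(t,x)|\le C|h|^H$. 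Squaring and using $|h|\le1$, all these contributions are dominated by the right-hand sides of \eqref{eq:6}--\eqref{eq:7}.

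For the inductive step, suppose $u_{n-1}^H$ satisfies \eqref{eq:6}--\eqref{eq:7} and is bounded in $L^2(\Omega)$ uniformly on $[0,T]\times\R$; the latter is (a weak form of) Lemma~\ref{lemma: uniform boundedness in L^p(Omega)}, which for a fixed $n$ can be carried out jointly with the present induction, since at level $n$ it only uses level $n-1$. Because $b$ is Lipschitz, $K_{n-1}:=\sup_{(s,y)}\|b(u_{n-1}^H(s,y))\|_{L^2(\Omega)}<\infty$, and Minkowski's inequality for integrals lets us pull the $L^2(\Omega)$-norm inside, reducing the increments of $D_n$ to purely deterministic $L^1$-estimates on the kernel. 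For the wave equation $\int_\R G_r(z)\,dz=r$, and shifting either the temporal or the spatial argument of $G_r$ changes the truncated domain of integration by a set of Lebesgue measure $O(|h|)$; this gives $\E{|D_n(t+h,x)-D_n(t,x)|^2}+\E{|D_n(t,x+h)-D_n(t,x)|^2}\le C K_{n-1}^2\,h^2$. For the heat equation one uses the sharp bounds $\|G_{r+h}-G_r\|_{L^1}\le C\min(1,h/r)$ and $\|G_r(\cdot+h)-G_r\|_{L^1}\le C\min(1,|h|/\sqrt r)$, and integrating against $r=t-s$ over $(0,t)$ yields $\E{|D_n(t,x+h)-D_n(t,x)|^2}\le CK_{n-1}^2\,h^2$ and a temporal bound of order $h^2$ up to a logarithmic factor; in all cases, for $|h|\le1$, the increments of $D_n$ are dominated by those already obtained for $u_0^H$ (note that $h^2\log^2(2T/h)\le C h^H$ on $(0,1]$ since $2>H$). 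Hence $(a+b)^2\le 2a^2+2b^2$ propagates \eqref{eq:6}--\eqref{eq:7} from $u_0^H$ to $u_n^H$, with a constant depending on $n$ and $H$ only through $K_{n-1}$ and the constants of Lemmas~\ref{lemma: 3.1}--\ref{lemma: 3.5}.

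Finally, $L^2(\Omega)$-continuity of $u_n^H$ is immediate: for $(t',x')\to(t,x)$ in $[0,T]\times\R$,
\[
\E{|u_n^H(t',x')-u_n^H(t,x)|^2}\le 2\,\E{|u_n^H(t',x')-u_n^H(t,x')|^2}+2\,\E{|u_n^H(t,x')-u_n^H(t,x)|^2}\longrightarrow 0
\]
by \eqref{eq:6} and \eqref{eq:7}. I expect the only genuinely delicate point to be the heat-equation estimates near the temporal singularity $r=t-s\downarrow 0$: the crude bound $\|G_{t+h}-G_t\|_{L^1}\lesssim h/t$ blows up there, so for $I_0$ one must exploit the semigroup property together with the spatial H\"older regularity of $u_0$, and for $D_n$ one must integrate the sharp $L^1$ modulus of continuity of the heat kernel carefully (the resulting logarithmic loss being harmless, as any power of $h$ above $H$ beats $h^H$ on $(0,1]$). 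Everything else is routine bookkeeping of constants, with care taken that all bounds are uniform in the frozen variable ($x$ in \eqref{eq:6}, $t$ in \eqref{eq:7}).
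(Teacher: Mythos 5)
Your argument is correct, and its skeleton (induction starting from $n=0$, with the base case handled exactly as in Step~1 of the proof of Theorem~\ref{th: convergence linear additive case} plus the regularity of $I_0$ under Hypothesis~B) matches the paper's; but your treatment of the drift term is genuinely different. The paper propagates the increment estimates through the Picard induction: for the inductive step it writes the drift increment of $u^H_{n+1}$ as an integral of $|b(u^H_n(\cdot+h))-b(u^H_n(\cdot))|$ against $G$, and uses the Lipschitz property of $b$, Cauchy--Schwarz, Fubini and the \emph{induction hypothesis on the increments of $u^H_n$} (plus Lemma~\ref{lemma: uniform boundedness in L^p(Omega)} only for the small extra time interval, which is where the exponent $\min(2H,1)$ comes from). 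You instead never use the increment bounds of the previous iterate: you freeze $b(u^H_{n-1})$ through its uniform $L^2(\Omega)$ bound (Lemma~\ref{lemma: uniform boundedness in L^p(Omega)}, which is indeed proved independently of $L^2$-continuity, so there is no circularity) and reduce the increments of the drift to deterministic $L^1$ modulus-of-continuity estimates for the kernels --- $O(h)$ for the indicator kernel of the wave equation, $\min(1,h/r)$ and $\min(1,h/\sqrt r)$ for the heat kernel --- obtaining near-$h^2$ bounds (with a harmless logarithm in the heat case) that dominate the required rates for $|h|\le 1$. What each route buys: yours gives slightly sharper drift estimates, decouples the induction (only well-posedness and uniform boundedness of the previous iterate are needed), and handles $I_0$ by direct d'Alembert/semigroup computations instead of quoting Theorem~3.7 of \cite{1/4<H<1/2}; the paper's route uses nothing about $G$ beyond $\int_\R G_s(y)\,dy\le C$, so it is more robust and transfers verbatim to situations where the $L^1$ modulus of the kernel is less pleasant, at the price of a weaker (but sufficient) rate. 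Two small housekeeping points in your write-up: the heat-kernel $L^1$ estimates you invoke are standard but not established in the paper, so they should be proved or referenced; and for temporal increments the statement only assumes $t+h\le T$, so for $h\ge 1$ you should note that \eqref{eq:6} follows trivially from the uniform $L^2$ bound (the paper implicitly restricts to small $h$ as well).
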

\begin{proof}
    We proceed by induction. In the case $n=0$, first we study the time
    increments. We focus on the right continuity. The computations for the left continuity are
    analogous. We have
    \begin{equation*}
    \begin{split}
    & \E{|u^H_0(t+h,x)-u^H_0(t,x)|^2} \leq 2( A_1+A_2),
    \end{split}
    \end{equation*}
    where
    \begin{equation*}
    \begin{split}
    A_1  & =  |I_0(t+h,x)-I_0(t,x)|^2 \\
    A_2 & =  \mathrm{E}\Big[\Big|  \int_{0}^{t}\int_{\R} [G_{t+h-s}(x-y)-G_{t-s}(x-y)]W^H(ds,dy)
     + \int_{t}^{t+h}\int_{\R} G_{t+h-s}(x-y)W^H(ds,dy) \Big|^2
    \Big].
    \end{split}
    \end{equation*}
    In \cite{1/4<H<1/2}, Theorem 3.7, it is shown that
    $$A_1\leq \begin{cases}
    Ch^{2H} &  \text{ for the wave equation,}\\
    Ch^{H} & \text{ for the heat equation.}
    \end{cases}$$
    Concerning the term  $A_2$, we have
           $$A_2\leq 2(A_{2,1}+A_{2,2}),$$
    where
    \begin{equation*}
    \begin{split}
    A_{2,1} & =  \E{\Big|\int_{0}^{t}\int_{\R} [G_{t+h-s}(x-y)-G_{t-s}(x-y)]W^H(ds,dy)\Big|^2 }, \\
    A_{2,2} & = \E{\Big|\int_{t}^{t+h}\int_{\R} G_{t+h-s}(x-y)W^H(ds,dy) \Big|^2 }.
    \end{split}
    \end{equation*}

These terms have been studied in the proof of Theorem 2.8,
concretely $A_{2,1}$ corresponds to term $J_1$ in that theorem and
term $A_{2,2}$ corresponds to $I_1$. So,
\begin{equation*}
    A_{2,1}  \leq  \begin{cases}
    Ch^{1+2H}, & \text{for the wave equation}, \\
    Ch^{\frac12+H}, & \text{for the heat equation},
    \end{cases}
    \end{equation*}
and
    \begin{equation*}
    A_{2,2}  \leq  \begin{cases}
    Ch^{1+2H}, & \text{for the wave equation}, \\
    Ch^{\frac12+H}, & \text{for the heat equation}.
    \end{cases}
    \end{equation*}
    Putting together the above estimates, we obtain the validity of
    \eqref{eq:6} for $n=0$.

    Regarding the space increments, we have, for any $h\in \R$ with
    $|h|<1$,
    $$\E{|u_0^H(t,x+h)-u_0^H(t,x)|^2} \leq 2(B_1+B_2),$$
    where
    \begin{equation*}
    \begin{split}
    B_1 & = |I_0(t,x+h)-I_0(t,x)|^2,\\
    B_2 & =  \E{\Big|  \int_{0}^{t} \int_\R [G_{t-s}(x+h-y)-G_{t-s}(x-y)] W^H(ds,dy)
    \Big|^2}.
    \end{split}
    \end{equation*}
    As before, by \cite[Thm. 3.7]{1/4<H<1/2}, we have
    $$B_1\leq Ch^{2H}$$
    for both heat and wave equations. The term  $B_2$ corresponds to
    $J_2$ in the proof of Theorem 2.8, hence
    \begin{equation*}
    B_2  \leq C|h|^{1+2H}.
    \end{equation*}
  So, we have proved \eqref{eq:7} for $n=0$.

\medskip

    We suppose now by induction hypothesis that $u^H_n$ satisfies
    \eqref{eq:6} and \eqref{eq:7}. Let us compute the time increments of
    $u^H_{n+1}$, for  $0<h<<1$:
    \begin{equation*}
    \begin{split}
    \E{|u_{n+1}^H(t+h,x)- & u_{n+1}^H(t,x)|^2} \leq  3( D_1+D_2+D_3),
    \end{split}
    \end{equation*}
    where
    \begin{equation*}
    \begin{split}
    D_1 & = \E{|u_{0}^H(t+h,x)-u_{0}^H(t,x)|^2}, \\
    D_2 & = \E{ \Big(\int_{0}^{t}\int_{\R} G_s(y) |b(u_n^H(t+h-s,x-y)) -b(u_n^H(t-s,y)|\, dy\,ds\Big)^2 },\\
    D_3 & = \E{\Big(\int_{t}^{t+h}\int_{\R} G_s(y) |b(u_n(t+h-s,x-y))|\, dy\,ds \Big)^2}.
    \end{split}
    \end{equation*}
    We already showed that $D_1$ is bounded as the right hand side of \eqref{eq:6}, so we only need to handle $D_2$ and $D_3$.
     As in Lemma 19 of \cite{extend}, first we compute $D_2$.
    Namely, using that $b$ is Lipschitz and applying Cauchy-Schwarz inequality and Fubini theorem, we have
    \begin{equation*}
    \begin{split}
    D_2 & \leq C\, \Big( \int_{0}^{t}\int_{\R} G_s(y)dy\,ds \Big) \mathrm{E}\Big[\int_{0}^{t}\int_{\R}G_{s}(y) |u_n^H(t+h-s,x-y)-u^H_n(t-s,x-y)|^2\, dy\,ds\Big]\\
     & \leq C\, \E{  \int_{0}^{t}\int_{\R} G_{s}(y) |u_n^H(t+h-s,x-y)-u^H_n(t-s,x-y)|^2 \, dy\,ds } \\
     &  =  C \int_{0}^{t}\int_{\R} G_{s}(y) \E{|u_n^H(t+h-s,x-y)-u^H_n(t-s,x-y)|^2}\, dy\,ds \\
    & \leq   \begin{cases}
    Ch^{2H}, & \text{wave equation,} \\
    Ch^{H}, &  \text{ heat equation.}
    \end{cases}
    \end{split}
    \end{equation*}
    Notice that in the last inequality we used the induction hypothesis.

    Regarding $D_3$, we have
    \begin{equation*}
    D_3   \leq  C\int_{t}^{t+h}\int_{\R} \Big(1+\E{|u_n^H(t+h-s,x-y)|^2}\Big) G_s(y)dy\,ds.
    \end{equation*}
    The uniform boundedness in $L^2(\Omega)$ of $u^H_n$ (by Lemma \ref{lemma: uniform boundedness in L^p(Omega)})
     gives that
    $$D_3\leq  C\int_{t}^{t+h}\int_{\R} G_s(y)dy\,ds\leq C h,$$
    for both wave and heat equations.
    Thus, taking into account the above estimates for $J_1$, $J_2$ and $J_3$,
    we obtain that $u^H_{n+1}$ satisfies \eqref{eq:6}.

    We are left to deal with the spatial increments of $u^H_{n+1}$.
    Indeed, we have
    $$\E{|u^H_{n+1}(t,x+h)-u^H_{n+1}(t,x)|^2}\leq 2(K_1+K_2),$$
    where
    \begin{equation*}
    \begin{split}
    K_1  & = \E{|u^H_0(t,x+h)-u^H_0(t,x)|^2},\\
    K_2  &  = \E{\Big(\int_{0}^{t} \int_{\R} |b(u^H_n(t-s,x+h-y))-b(u^H_n(t-s,x-y))|G_s(y)dy\,ds\Big)^2 }.
    \end{split}
    \end{equation*}
    The term $K_1$ has already been studied, and $K_2$ can be treated as the term
    $J_2$, obtaining that $K_2\leq C |h|^{2H}$. So we can infer that
    \eqref{eq:7} is fulfilled for $u^H_{n+1}$.
\end{proof}

\medskip

\begin{lemma}
    \label{lemma: uniform boundedness in L^p(Omega)}
    Let $p\geq 2$ and $[a,b]\subset (0,1)$.
    Let $u^H_n$, $n\geq 0$, be the Picard iteration scheme defined in
    \eqref{eq: Picard iteration - 1} and \eqref{eq: Picard iteration - 2 }.
    Then,
    $$\sup_{n\geq 0}\sup_{H\in[a,b]}\sup_{(t,x)\in [0,T]\times R} \E{|u_n^H(t,x)|^p}<\infty.$$
\end{lemma}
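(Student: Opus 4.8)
The plan is to establish the bound by induction on $n$, making sure at each step that every constant produced is independent of $n$, of $H\in[a,b]$ and of $(t,x)\in[0,T]\times\R$. I write $L$ for the Lipschitz constant of $b$, so that $|b(z)|\le|b(0)|+L|z|$, and I set $g(r):=\int_\R G_r(y)\,dy$, which equals $r$ for the wave equation and $1$ for the heat equation; in both cases $\bar g:=\sup_{r\in[0,T]}g(r)<\infty$ and $\int_0^{t}g(t-s)\,ds\le\bar g\,T$ whenever $t\le T$. For $m\ge 0$ put
\[
\phi_m(t):=\sup_{H\in[a,b]}\ \sup_{x\in\R,\ 0\le s\le t}\big(\E{|u_m^H(s,x)|^p}\big)^{1/p},
\]
a nondecreasing function of $t$; the statement amounts to showing that $\sup_m\phi_m(T)<\infty$.

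I would first treat the base case $n=0$, where $u_0^H(t,x)=I_0(t,x)+\int_0^{t}\int_\R G_{t-s}(x-y)\,W^H(ds,dy)$. Under Hypothesis B the deterministic term $I_0$ is bounded on $[0,T]\times\R$ by a constant not depending on $H$. The stochastic convolution is a centered Gaussian variable, so its $p$-th moment is $z_p$ times the $(p/2)$-th power of its variance, and after the change of variables $s'=t-s$ its variance equals
\[
c_H\int_0^{t}\int_\R|\F G_{s'}(\xi)|^2|\xi|^{1-2H}\,d\xi\,ds'\ \le\ c_H\,A_T(1-2H),
\]
with $A_T$ as in Lemma \ref{lemma: 3.1}. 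Since $c_H\le 1/\pi$ for every $H\in(0,1)$ and, by the explicit formula of Lemma \ref{lemma: 3.1} together with the continuity argument already used in Step 1 of the proof of Theorem \ref{th: convergence linear additive case} (the apparent singularity of the relevant constants at $H=\tfrac12$ is removable), the map $H\mapsto A_T(1-2H)$ is continuous on $(0,1)$ and hence bounded on $[a,b]$, this gives $\phi_0(T)=:C_0<\infty$.

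For the inductive step I assume $\phi_n(T)<\infty$ and use \eqref{eq: Picard iteration - 2 }, the nonnegativity of $G$, Minkowski's integral inequality and the Lipschitz bound on $b$ to obtain, for every $(t,x)$ with $t\le T$,
\[
\big(\E{|u_{n+1}^H(t,x)|^p}\big)^{1/p}\le\big(\E{|u_0^H(t,x)|^p}\big)^{1/p}+\int_0^{t}\int_\R G_{t-s}(x-y)\big(|b(0)|+L\,\phi_n(s)\big)\,dy\,ds.
\]
Using $\int_\R G_{t-s}(x-y)\,dy=g(t-s)\le\bar g$ and then taking the supremum over $x$, over times $\le t$ and over $H\in[a,b]$ (all the constants involved being $H$-free, while $\phi_n$ already carries the supremum over $H$) yields
\[
\phi_{n+1}(t)\le C_0+|b(0)|\bar g\,T+L\bar g\int_0^{t}\phi_n(s)\,ds.
\]
Setting $A:=C_0+|b(0)|\bar g\,T$ and iterating this linear integral inequality from $\phi_0\le C_0$ gives $\phi_n(t)\le A\sum_{k=0}^{n-1}\frac{(L\bar g\,t)^k}{k!}+C_0\frac{(L\bar g\,t)^n}{n!}\le(A+C_0)\,e^{L\bar g\,T}$ for all $n$, a bound independent of $n$ and of $H\in[a,b]$. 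Evaluating at $t=T$ and raising to the power $p$ proves the lemma.

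The only genuinely delicate point is the base case: once the Gaussian moment of $u_0^H$ is controlled uniformly over $H\in[a,b]$ — which reduces to bounding $c_H\,A_T(1-2H)$ on a compact subinterval of $(0,1)$, precisely the kind of estimate already carried out in the proof of Theorem \ref{th: convergence linear additive case} — the inductive step is a routine Gr\"onwall-type argument whose constants are automatically independent of $H$, since neither the kernel $G$ nor $I_0$ depends on $H$.
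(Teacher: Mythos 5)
Your proof is correct and follows essentially the same route as the paper: a base-case bound for $u_0^H$ via the Gaussian moment formula, $c_H\le 1/\pi$ and Lemma \ref{lemma: 3.1} with constants uniformly bounded on the compact $[a,b]$, followed by an $H$-uniform linear integral inequality in $n$ closed by a Gr\"onwall-type iteration. The only cosmetic differences are that you use Minkowski's integral inequality where the paper uses H\"older/Jensen with the kernel $G$, and that you spell out the iteration of the recursion explicitly rather than simply invoking Gr\"onwall's lemma.
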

\begin{proof}
    First, we have
    \begin{equation*}
    \begin{split}
    \E{|u^H_0(t,x)|^p}\leq C_p\Big(|I_0(t,x)|^p+ \E{\Big| \int_0^t\int_\R G_{t-s}(x-y) W^H(ds,dy)\Big|^p}\Big).
    \end{split}
    \end{equation*}
    By \cite{rev}, Lemma 4.2, we have that
    $$\sup_{(t,x)\in [0,T]\times\R} |I_0(t,x)|<\infty,$$
    and this is uniform in $H$, since we are considering the same initial conditions for every $H$.
    Regarding the stochastic term, arguing as in \eqref{eq: estimate burkholder
    type} and applying Lemma \ref{lemma: 3.1}, we get
    \begin{equation*}
    \begin{split}
    \E{\Big| \int_0^t\int_\R G_{t-s}(x-y) W^H(ds,dy)\Big|^p}  & =
     z_p c_H^{p/2} \Big[  \int_{0}^{t}\int_{\R} |\F G_{t-s}(x-\cdot)(\xi)|^2|\xi|^{1-2H}d\xi\,ds  \Big]^{p/2} \\
    & \leq \begin{cases}
    C_{p}\, \Big(t^{1+2H}\Big)^{p/2}, & \text{wave equation,} \\
    C_{p}\, \Big(t^{H}\Big)^{p/2}, & \text{heat equation.}
    \end{cases}
    \end{split}
    \end{equation*}
    The last inequality comes from an estimate essentially identical to the one already computed
    in \eqref{eq: estimate for I_1}.  All above constants which are dependent on $H$
    can be uniformly bounded, provided that $H$ is in the compact interval
      $[a,b]\subset (0,1)$.
    The above considerations yield
    $$\sup_{H\in[a,b]}\sup_{(t,x)\in [0,T]\times\R}\E{|u^H_0(t,x)|^p}<\infty.$$

Next, owing to \eqref{eq: Picard iteration - 2 } we can infer that
    \begin{equation*}
    \label{eq: start of the estimate for uniform L^p boundedness}
    \begin{split}
    \E{|u^H_{n+1}(t,x)|^p}  \leq C \Big( 1 +
    \E{\Big| \int_{0}^{t} \int_\R G_{t-s}(x-y) b(u^H_{n}(s,y))dy\,ds\Big|^p}\Big).
    \end{split}
    \end{equation*}
    If we apply H\"older inequality, we obtain
    \begin{equation}
    \label{eq: use of Holder inequality}
    \begin{split}
    & \E{\Big| \int_{0}^{t} \int_\R G_{t-s}(x-y) b(u^H_{n}(s,y))dy\,ds\Big|^p} \\
    & \qquad  \leq C \E{\int_{0}^{t} \int_\R G_{t-s}(x-y) \Big(1+|u^H_{n}(s,y))|^p\Big) dy\,ds } \\
    & \qquad = C_{1} + C_{2} \int_{0}^{t} \int_\R G_{t-s}(x-y)\E{|u^H_{n}(s,y))|^p} dy\,ds \\
    & \qquad \leq C_{1}+  C_{2} \int_{0}^{t} \int_\R \sup_{H\in[a,b]}\sup_{(s',y)\in [0,s]\times\R} \E{|u^H_{n}(s',y))|^p} G_{t-s}(x-y)dy\,ds \\
    & \qquad \leq C_{1}+  C_{2} \int_{0}^{t} \sup_{H\in[a,b]}\sup_{(s',y)\in [0,s]\times\R} \E{|u^H_{n}(s',y))|^p} ds.\\
    \end{split}
    \end{equation}
    The constants appearing in the previous calculations are clearly independent of $H$. Then,
    we have
    \begin{equation*}
    \begin{split}
    \sup_{H\in[a,b]}\sup_{(t',y)\in [0,t]\times\R}&\E{|u^H_{n+1}(t',y)|^p} \\
    & \leq C_{1}+  C_{2} \int_{0}^{t} \sup_{H\in[a,b]} \sup_{(s',y)\in [0,s]\times\R} \E{|u^H_{n}(s',y))|^p} ds.\\
    \end{split}
    \end{equation*}
    We conclude the proof by applying Gr\"onwall lemma.
\end{proof}


\section{Quasi-linear additive case: weak convergence}
\label{sec:1}

This section is devoted to prove that the mild solution $u^H$ of
equation \eqref{eq: wave} (resp. \eqref{eq: heat}) converges in law
in the space of continuous functions, as $H\to H_0$, to the solution
$u^{H_0}$ of \eqref{eq: wave} (resp. \eqref{eq: heat}) corresponding
to the Hurst index $H_0$.

Throughout this section, we fix $H_0\in (0,1)$ and any sequence
$(H_n)_{n\geq 1}$ converging to $H_0$. Then, we consider the
following assumptions for the initial data: 

\medskip

\noindent {\bf{Hypothesis C:}} For some $\alpha>H_0$, it holds that
\begin{itemize}
 \item[(a)] {\it{Wave equation}}: $u_0$ and $v_0$ are $\alpha$-H\"older continuous and bounded.
 \item[(b)] {\it{Heat equation}}:  $u_0$ is $\alpha$-H\"older continuous and bounded.
\end{itemize}

\medskip

Without any loss of generality, we assume that $H_n\leq \alpha$, for all $n\geq 1$. 
Hence, we will be able to apply the results of
the previous section for all these Hurst indexes.

\medskip

The main strategy to prove that $u^{H_n}$ converges in law to
$u^{H_0}$ can be summarized as follows. Recall that $b$ is assumed
to be globally Lipschitz. Let $\eta$ be a deterministic function in
$C([0,T]\times \R)$, and consider the (deterministic) integral
equation
\begin{equation}
\label{eq: integral equation}
z(t,x)= \int_{0}^{t}\int_{\R} b(z(s,y))G_{t-s}(x-y)dsdy + \eta(t,x),
\end{equation}
which is defined on the space $C([0,T]\times \R)$, endowed with the
metric of uniform convergence on compact sets.

We will prove that  \eqref{eq: integral equation} admits a unique
solution. This allows us to define the solution operator
\begin{equation}
F:C([0,T]\times \R)\longrightarrow C([0,T]\times \R) \label{eq:10}
\end{equation}
by $(F\eta)(t,x):=z(t,x)$. We will show that this operator is
continuous. Note that $u^{H_n}=F(\bar u^{H_n})$ (almost surely), for
all $n\geq 0$, where $\bar u^{H_n}$ denotes the solution in the
linear additive case (i.e. $b=0$). Moreover, by Theorem \ref{th:
convergence linear additive case}, $\bar u^{H_n}$ converges in law,
in the space of continuous functions, to $\bar u^{H_0}$. Therefore,
we can apply Theorem 2.7 of \cite{billingsley} to obtain
the desired result.

\medskip

Here is the main result of the paper.
\begin{theorem}\label{thm:9}
Assume that Hypothesis C is fulfilled and $b$ is globally Lipschitz.
Then, $u^{H_n}\xrightarrow{d}u^{H_0}$, as $n\to\infty$, where the
convergence holds in distribution in the space $C([0,T]\times \R)$.
\end{theorem}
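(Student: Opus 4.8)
The plan is to realize $u^{H_n}$ as a \emph{pathwise} continuous image of the linear-additive solution $\bar u^{H_n}$ (the case $b=0$), and then to combine this with Theorem \ref{th: convergence linear additive case} via the continuous mapping theorem. Concretely, for $\eta\in C([0,T]\times\R)$, if the deterministic equation \eqref{eq: integral equation} admits a unique solution $z$ we set $(F\eta)(t,x):=z(t,x)$; the two things to check are (a) that $F$ is well-defined and continuous as a map $C([0,T]\times\R)\to C([0,T]\times\R)$ for the topology of uniform convergence on compacts, and (b) that $u^{H_n}=F(\bar u^{H_n})$ $\mathbb{P}$-a.s., which follows by comparing \eqref{eq: mild formulation quasi-linear additive} with \eqref{eq: integral equation} for $\eta=\bar u^{H_n}$ and identifying $u^{H_n}$ with its continuous modification from Theorem \ref{thm:5}. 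Granting (a)--(b), since $\bar u^{H_n}\xrightarrow{d}\bar u^{H_0}$ by Theorem \ref{th: convergence linear additive case}, Theorem~2.7 of \cite{billingsley} yields $u^{H_n}\xrightarrow{d}u^{H_0}$. The subtlety is that for the heat equation with unbounded $b$ the operator $F$ need not even be well-defined on all of $C([0,T]\times\R)$, since the Gaussian kernel is not compactly supported and $\int_0^t\int_\R|b(z(s,y))|G_{t-s}(x-y)\,dy\,ds$ may diverge for a merely continuous $z$. I would therefore split the proof into three parts.

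\textbf{Wave equation and heat equation with $b$ bounded.} In the wave case $G_{t-s}(x-y)=\tfrac12 1_{|x-y|<t-s}$ is compactly supported and locally integrable; in the bounded-heat case $\int_\R G_t(x-y)\,dy=1$ and $|b|\le\|b\|_\infty$. In either situation the map $z\mapsto\int_0^\cdot\int_\R b(z(s,y))G_{\cdot-s}(\cdot-y)\,dy\,ds$ sends $C([0,T]\times\R)$ into itself, so a Picard iteration controlled by the Lipschitz constant of $b$ and by the relevant ad hoc Gr\"onwall lemma (Lemma \ref{lemma: Gronwall wave} for the wave equation, Lemma \ref{lemma: gronwall heat} for the heat equation) gives existence and uniqueness of $z=F\eta$. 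Applying the same estimates to the difference of the solutions associated with $\eta_k$ and $\eta$ shows, on each space-time compact, that $\|F\eta_k-F\eta\|$ is dominated by $\|\eta_k-\eta\|$ plus a term Lipschitz in $F\eta_k-F\eta$, and Gr\"onwall closes the estimate; hence $F$ is continuous. Then (a)--(b) hold and the argument of the previous paragraph applies.

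\textbf{Heat equation with general Lipschitz $b$.} Fix a Lipschitz cut-off $\psi_N$ with $\psi_N\equiv 1$ on $[-N,N]$ and $\mathrm{supp}\,\psi_N\subset[-N-1,N+1]$, put $b_N:=b\,\psi_N$ (bounded and globally Lipschitz), and let $u_N^{H}$ be the solution of \eqref{eq: mild formulation quasi-linear additive} with $b$ replaced by $b_N$, which exists by Theorem \ref{th: existence and uniqueness b Lipschitz}. By the bounded-heat case, $u_N^{H_n}\xrightarrow{d}u_N^{H_0}$ as $n\to\infty$ for each fixed $N$, and the same estimates with $n$ fixed give $u_N^{H_0}\xrightarrow{d}u^{H_0}$ as $N\to\infty$. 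To replace $u_N^{H_n}$ by $u^{H_n}$, I would invoke the standard approximation criterion for weak convergence (e.g.\ \cite[Thm.~3.2]{billingsley}): it suffices to show, for every compact $K\subset[0,T]\times\R$ and every $\varepsilon>0$,
\[
\lim_{N\to\infty}\ \sup_{n\ge 0}\ \mathbb{P}\Big(\sup_{(t,x)\in K}\big|u^{H_n}(t,x)-u_N^{H_n}(t,x)\big|>\varepsilon\Big)=0 .
\]
Since $u^{H_n}-u_N^{H_n}$ vanishes as long as $u^{H_n}$ stays in $[-N,N]$ on the relevant space-time region, off that event one applies the nonlinear Gr\"onwall estimate of Lemma \ref{lemma: gronwall heat} to $u^{H_n}-u_N^{H_n}$, while the probability of the exceedance event is bounded by Chebyshev's inequality in terms of $\sup_{n}\sup_{(t,x)\in[0,T]\times\R}\E{|u^{H_n}(t,x)|^p}$. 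This last supremum is finite: the Hurst indices involved lie in a compact subinterval $[a,b]\subset(0,1)$ and Hypothesis C is in force, so the proof of Lemma \ref{lemma: uniform boundedness in L^p(Omega)} carries over verbatim to the solutions themselves (not merely the Picard iterates).

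The hard part is the third case, and specifically the interchange of the limits $n\to\infty$ and $N\to\infty$: it requires the moment and modulus-of-continuity bounds of Section \ref{ch: quasi-linear additive} to hold \emph{uniformly in the Hurst index} (which is exactly why one restricts $H_n$ to a compact subinterval of $(0,1)$), and it requires controlling the exit probability $\mathbb{P}(\sup_K|u^{H_n}|>N)$ uniformly in $n$ and then propagating this bound through the nonlinear Gr\"onwall comparison of $u^{H_n}$ with $u_N^{H_n}$ — which, for the non-compactly-supported heat kernel, is the technical heart of the whole argument. By contrast, the well-posedness and continuity of $F$ in the wave and bounded-heat cases are routine Banach fixed-point arguments.
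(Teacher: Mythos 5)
The wave case and the heat case with bounded $b$ in your proposal coincide with the paper's argument (solution operator $F$, Picard iteration controlled by the ad hoc Gr\"onwall lemmas, continuity of $F$, then the continuous mapping theorem together with Theorem \ref{th: convergence linear additive case}), and that part is fine. The genuine gap is in the heat equation with general $b$, precisely at the step you yourself flag as the technical heart. Your transfer from $u_N^{H_n}$ to $u^{H_n}$ rests on the claim that $u^{H_n}=u_N^{H_n}$ on the event that $u^{H_n}$ stays in $[-N,N]$ ``on the relevant space-time region'', with the probability of the complementary event controlled by Chebyshev through $\sup_n\sup_{(t,x)}\E{|u^{H_n}(t,x)|^p}$. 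For the heat equation the relevant region is all of $[0,T]\times\R$: the Gaussian kernel has full support, so $u^{H_n}(t,x)$ for $(t,x)$ in a compact $K$ depends on the drift evaluated at every $y\in\R$. But the event $\{\sup_{[0,T]\times\R}|u^{H_n}|\leq N\}$ is negligible: the stochastic convolution is a nondegenerate, spatially homogeneous Gaussian field, so its supremum over the unbounded spatial domain is almost surely infinite and $\mathbb{P}(\sup_{[0,T]\times\R}|u^{H_n}|>N)=1$ for every $N$. Pointwise moment bounds (Lemma \ref{lemma: uniform boundedness in L^p(Omega)} plus Chebyshev) control exceedance at fixed points only, not this supremum, so the condition $\lim_{N}\sup_n\mathbb{P}\big(\sup_K|u^{H_n}-u_N^{H_n}|>\varepsilon\big)=0$ required by \cite[Thm.~3.2]{billingsley} does not follow from what you propose. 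A secondary but real defect is your choice of truncation $b_N=b\,\psi_N$: since $\sup_{|x|\leq N+1}|b(x)|$ grows like $N$, the Lipschitz constant of $b_N$ grows with $N$, and any Gr\"onwall comparison of $u^{H}$ with $u_N^{H}$ then produces constants of order $e^{C_N T}$ that blow up; one must truncate by clipping the values of $b$ (as the paper does), which keeps a single Lipschitz constant for all truncations.

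What the paper does instead, and what your outline would need to be repaired into, is purely an $L^2$-moment comparison with no pathwise localization: using that $b_m(u)=b(u)$ pointwise on $\{|u|\leq m\}$, Cauchy--Schwarz, Markov's inequality at each fixed $(s,y)$, and a Gr\"onwall argument in time for $\sup_{H\in[a,b]}\sup_{(s,y)}\E{|u_m^H(s,y)-u^H(s,y)|^2}$, one gets $\sup_{H\in[a,b]}\sup_{(t,x)\in[0,T]\times\R}\E{|u_m^H(t,x)-u^H(t,x)|^2}\leq C/m$, uniformly in the Hurst index. This is then combined not with a uniform-on-compacts closeness in probability, but with (i) tightness of $\{u^{H_n}\}$ in $C([0,T]\times\R)$, proved directly from the Kolmogorov/Centsov criterion (Theorem \ref{th: centsov}) with increment estimates uniform over the compact set of Hurst indices, and (ii) convergence of the finite-dimensional distributions, obtained from a three-term triangle inequality with Lipschitz test functions, where the first and third terms are handled by the uniform $L^2$ bound and the middle one by the weak convergence $u_m^{H_n}\xrightarrow{d}u_m^{H_0}$ from the bounded case. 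As written, your interchange of the limits $n\to\infty$ and $N\to\infty$ is not justified.
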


\medskip

The proof of the above theorem will be tackled in the following
three subsections. Indeed, we need to distinguish the case of the
wave equation from the one of the heat equation. Moreover, for the
heat equation, we split the analysis in two subcases: bounded $b$
and possibly unbounded $b$. As it will be made clear in the sequel,
in the latter case, the above-explained strategy based on the {\it
solution operator} cannot be applied, so the case $b$ unbounded will
be studied separately.


\subsection{Wave equation}
\label{sec:w}

In this section, we provide the proof of Theorem \ref{thm:9} for the
stochastic wave equation \eqref{eq: wave}. For this, as already
explained, it suffices to prove that equation \eqref{eq: integral
equation} has a unique solution and that the solution operator
\eqref{eq:10} is continuous. These two facts will be proved in
Theorem \ref{thm:19} below.

We recall that the fundamental solution $G$ of the wave equation on $[0,\infty)\times \R$ is
$$G_t(x)=\frac{1}{2}1_{\{|x|\leq t\}}.$$
We will make use of the following ad hoc version of Gr\"onwall lemma
(\cite{giulio}). We give its proof for the sake of completeness.
\begin{lemma}
    \label{lemma: Gronwall wave}
    Let $\{f_n, \, n\geq 0\}$ be a sequence of real-valued non-negative functions defined
    on $[0,T]\times[a-T,b+T]$, for some $a,b\in \R$ such that $a<b$, and $T>0$. Suppose that there exist
    $\lambda,\mu>0$ such that, for every $(t,x)\in [0,T]\times [a,b]$  and $n\geq 0$,
    $$f_{n+1}(t,x)\leq \lambda+\frac{\mu}{2}\int_{0}^{t}\int_{x-t+s}^{x+t-s} f_n(s,y)\, dyds,$$
    and that $f_0$ is bounded. Then, for every $n\geq 0$ and $(t,x)\in[0,T]\times[a,b]$, it holds that
    \begin{equation}
    \label{eq: inequality in the lemma}
    f_{n}(t,x)\leq \lambda \sum_{k=0}^{n-1} \frac{(\mu t^2)^k}{k!} + ||f_0||_\infty\frac{(\mu t^2)^n}{n!},
    \end{equation}
    which in particular implies that
    $$\limsup_{n\to \infty} f_n(t,x)\leq \lambda\exp(\mu t^2).$$
\end{lemma}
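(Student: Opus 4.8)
The plan is to establish the pointwise bound \eqref{eq: inequality in the lemma} by induction on $n$, and then to obtain the $\limsup$ assertion by letting $n\to\infty$. For $n=0$ the claimed inequality reduces to $f_0(t,x)\le \|f_0\|_\infty$ (the sum over $k$ being empty), which holds by the hypothesis that $f_0$ is bounded. Note that for $(t,x)\in[0,T]\times[a,b]$ and $0\le s\le t$ one has $[x-t+s,x+t-s]\subseteq[x-t,x+t]\subseteq[a-T,b+T]$, so the domain on which the $f_n$ are defined is exactly what is needed for the recursive inequality to make sense.

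For the inductive step I would assume \eqref{eq: inequality in the lemma} for $f_n$ and insert it into the recursive inequality satisfied by $f_{n+1}$. The key observation is that the upper bound for $f_n(s,y)$ given by the induction hypothesis does not depend on $y$, so the inner integral $\int_{x-t+s}^{x+t-s} f_n(s,y)\,dy$ is at most $2(t-s)$ times that bound. This yields
$$f_{n+1}(t,x)\le \lambda + \mu\int_0^t (t-s)\left[\lambda\sum_{k=0}^{n-1}\frac{(\mu s^2)^k}{k!} + \|f_0\|_\infty\frac{(\mu s^2)^n}{n!}\right]ds.$$
It then remains to compute, for each $k\ge 0$, the elementary integral $\int_0^t (t-s)s^{2k}\,ds = \dfrac{t^{2k+2}}{(2k+1)(2k+2)}$, and to observe the factorial identity $(2k+1)(2k+2)\,k! = 2(2k+1)(k+1)! \ge (k+1)!$, which gives $\mu\int_0^t (t-s)\dfrac{(\mu s^2)^k}{k!}\,ds \le \dfrac{(\mu t^2)^{k+1}}{(k+1)!}$. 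Summing over $k$ and re-indexing by $j=k+1$ produces $\lambda\sum_{j=1}^{n}\frac{(\mu t^2)^j}{j!}$ from the first bracketed term and $\|f_0\|_\infty\frac{(\mu t^2)^{n+1}}{(n+1)!}$ from the second; adding the leftover constant $\lambda$ back in reconstitutes precisely \eqref{eq: inequality in the lemma} with $n$ replaced by $n+1$.

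Finally, for the last assertion I would simply note that $\lambda\sum_{k=0}^{n-1}\frac{(\mu t^2)^k}{k!}\to \lambda e^{\mu t^2}$ and $\|f_0\|_\infty\frac{(\mu t^2)^n}{n!}\to 0$ as $n\to\infty$, whence $\limsup_{n\to\infty} f_n(t,x)\le \lambda\exp(\mu t^2)$.

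I do not expect a genuine obstacle here: the argument is a textbook iteration estimate. The only two points needing a little care are (i) verifying that $[0,T]\times[a-T,b+T]$ is large enough for the recursion, as noted above, and (ii) keeping the factorial bookkeeping $(2k+1)(2k+2)\,k! = 2(2k+1)(k+1)!$ exact so that the exponential series genuinely dominates termwise; everything else is a routine computation.
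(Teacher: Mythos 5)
Your proof is correct and follows essentially the same inductive argument as the paper; the only cosmetic differences are that you start the induction at $n=0$ instead of $n=1$ and evaluate $\int_0^t (t-s)s^{2k}\,ds$ exactly, whereas the paper first bounds $(t-s)\le t$, both yielding a denominator that dominates $(k+1)!$.
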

\begin{proof}
    We prove it by induction: the case $n=1$ reduces to the  inequality
    $$f_1(t,x)\leq \lambda+\mu t^2||f_0||_\infty,$$
    that is clearly satisfied.
    We go on with the inductive step: if \eqref{eq: inequality in the lemma} holds true, then
    \begin{equation*}
    \begin{split}
    f_{n+1}(t,x) & \leq \lambda+\frac{\mu}{2}\int_{0}^{t}\int_{x-t+s}^{x+t-s} \Big[\lambda\sum_{k=0}^{n-1}\frac{(\mu s^2)^k}{k!}+||f_0||_\infty\frac{(\mu s^2)^n}{n!}\Big] dsdy \\
    & = \lambda+\frac{\mu}{2}\int_{0}^{t}2(t-s)\Big[\lambda\sum_{k=0}^{n-1}\frac{(\mu s^2)^k}{k!}+||f_0||_\infty\frac{(\mu s^2)^n}{n!}\Big] ds \\
    & \leq \lambda+\mu\int_{0}^{t} t \Big[\lambda\sum_{k=0}^{n-1}\frac{(\mu s^2)^k}{k!}+||f_0||_\infty\frac{(\mu s^2)^n}{n!}\Big] ds \\
    & = \lambda+\mu\Big[\lambda\sum_{k=0}^{n-1}\frac{\mu^k (t^2)^{k+1}}{k!(2k+1)}+||f_0||_\infty\frac{\mu^n (t^2)^{n+1}}{n!(2n+1)}\Big] \\
    & =  \lambda+\lambda\sum_{k=0}^{n-1}\frac{\mu^{k+1} (t^2)^{k+1}}{k!(2k+1)}+||f_0||_\infty\frac{\mu^{n+1} (t^2)^{n+1}}{n!(2n+1)}\\
    & \leq  \lambda \sum_{k=0}^{n}\frac{\mu^{k} (t^2)^{k}}{k!}+||f_0||_\infty\frac{\mu^{n+1} (t^2)^{n+1}}{(n+1)!}, \\
    \end{split}
    \end{equation*}
    which is our thesis. In the last two inequalities, we shifted by one the index of the sum
    and we used the fact that $4k^2+6k+2> k+1$, for every $k\in \N$. If we take the $\limsup$ as
    $n\to \infty$ in both sides of the inequality we also obtain easily that
    $$\limsup_{n\to \infty} f_n(t,x)\leq \lambda\exp(\mu t^2).$$
\end{proof}

\medskip

We will use the above Gr\"onwall-type  lemma to prove the following
theorem, proved also in \cite{giulio}.
\begin{theorem}\label{thm:19}
    Let $\eta\in C([0,T]\times \R)$ and consider the deterministic equation
    \eqref{eq: integral equation} in the case where $G$ is the fundamental solution of the wave equation.
    Then, \eqref{eq: integral equation} has a unique solution $z\in C([0,T]\times \R)$.
    Moreover, the solution operator
    $$F:C([0,T]\times \R)\to C([0,T]\times \R)$$
    defined by $F(\eta)=z$ is continuous, if we endow $C([0,T]\times \R)$
    with the metric of uniform convergence on compact sets.
\end{theorem}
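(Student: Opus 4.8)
The plan is to treat existence and uniqueness of the solution via a Picard iteration scheme, and then to establish continuity of the solution operator $F$ by a stability estimate, in both cases exploiting the Gr\"onwall-type Lemma~\ref{lemma: Gronwall wave}. The point is that everything is local in space: because $G_{t-s}(x-y)=\tfrac12 1_{\{|x-y|\le t-s\}}$, the value $z(t,x)$ depends only on the values of $\eta$ and of $z$ on the backward light cone $\{(s,y):0\le s\le t,\ |y-x|\le t-s\}$. So I would fix an arbitrary compact interval $[a,b]\subset\R$ and work with the supremum of the relevant quantities over the enlarged strip $[0,T]\times[a-T,b+T]$, which is exactly the geometry built into Lemma~\ref{lemma: Gronwall wave}.

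First I would prove existence. Define $z_0(t,x):=\eta(t,x)$ and, for $n\ge 0$,
\[
z_{n+1}(t,x):=\eta(t,x)+\int_0^t\int_{x-t+s}^{x+t-s} b(z_n(s,y))\,dy\,ds .
\]
A straightforward induction (using that each $z_n$ is continuous, that $b$ is continuous, hence Lipschitz, and that the integral of a continuous function over the light cone depends continuously on $(t,x)$) shows every $z_n\in C([0,T]\times\R)$. Next, setting $f_n(t,x):=\sup_{s\le t}|z_{n+1}(s,x)-z_n(s,x)|$ (or more simply $f_n(t,x)=|z_{n+1}(t,x)-z_n(t,x)|$ after noting monotonicity in $t$ of the bound), the Lipschitz property of $b$ with constant $L$ gives
\[
f_{n+1}(t,x)\le \frac{L}{2}\int_0^t\int_{x-t+s}^{x+t-s} f_n(s,y)\,dy\,ds ,
\]
so Lemma~\ref{lemma: Gronwall wave} with $\lambda=0$, $\mu=2L$ yields $f_n(t,x)\le \|f_0\|_{\infty,[0,T]\times[a-T,b+T]}\,(2Lt^2)^n/n!$. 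This is summable, so $\{z_n\}$ is uniformly Cauchy on $[0,T]\times[a,b]$ for every compact $[a,b]$, hence converges in $C([0,T]\times\R)$ (with the topology of uniform convergence on compacts) to a limit $z$; passing to the limit in the recursion (the integral term converges by uniform convergence on the relevant compact light cone) shows $z$ solves \eqref{eq: integral equation}. For uniqueness, if $z,\tilde z$ are two solutions, then $g(t,x):=|z(t,x)-\tilde z(t,x)|$ satisfies $g(t,x)\le \tfrac{L}{2}\int_0^t\int_{x-t+s}^{x+t-s} g(s,y)\,dy\,ds$; iterating this inequality $n$ times (equivalently, applying the Gr\"onwall lemma to the constant-in-$n$ sequence $f_n\equiv g$, with $\lambda=0$) bounds $g(t,x)$ by $\|g\|_\infty (2Lt^2)^n/n!\to 0$, so $z=\tilde z$.

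Finally I would prove continuity of $F$. Fix a compact $[a,b]$ and $T$, let $\eta,\tilde\eta\in C([0,T]\times\R)$ with $\|\eta-\tilde\eta\|_{\infty,[0,T]\times[a-T,b+T]}=:\delta$, and put $z=F(\eta)$, $\tilde z=F(\tilde\eta)$, $g(t,x)=|z(t,x)-\tilde z(t,x)|$. Subtracting the two integral equations and using the Lipschitz bound on $b$,
\[
g(t,x)\le \delta+\frac{L}{2}\int_0^t\int_{x-t+s}^{x+t-s} g(s,y)\,dy\,ds .
\]
Applying Lemma~\ref{lemma: Gronwall wave} (with the constant sequence $f_n\equiv g$, $\lambda=\delta$, $\mu=2L$, and $f_0=g$ which is bounded on the strip since $z,\tilde z$ are continuous there) and letting $n\to\infty$ in \eqref{eq: inequality in the lemma} gives $g(t,x)\le \delta\exp(2Lt^2)\le \delta e^{2LT^2}$ for all $(t,x)\in[0,T]\times[a,b]$. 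Thus $\sup_{[0,T]\times[a,b]}|F(\eta)-F(\tilde\eta)|\le e^{2LT^2}\,\|\eta-\tilde\eta\|_{\infty,[0,T]\times[a-T,b+T]}$, which shows $F$ is (locally Lipschitz, hence) continuous in the topology of uniform convergence on compact sets. I expect the only mildly delicate point to be bookkeeping of the compact sets: one must enlarge $[a,b]$ to $[a-T,b+T]$ to control the light-cone integrals, and check that $f_0$ (i.e.\ $\|\eta\|_\infty$ or $g$) is indeed finite on that enlarged strip — but this is exactly why Lemma~\ref{lemma: Gronwall wave} is phrased on $[0,T]\times[a-T,b+T]$, so no genuine obstacle arises.
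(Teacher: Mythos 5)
Your proposal is correct and follows essentially the same route as the paper's proof: the same Picard iteration, continuity of each iterate by induction, uniform Cauchy estimates via Lemma \ref{lemma: Gronwall wave} with $\lambda=0$, and uniqueness and continuity of $F$ by applying the same lemma (with the constant sequence and $\lambda=\|\eta_1-\eta_2\|_\infty$ on the relevant strip). Your explicit bookkeeping of the enlarged strip $[a-T,b+T]$ and the constant $e^{2LT^2}$ only makes explicit what the paper leaves implicit.
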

\begin{proof}
    We define the Picard iteration scheme
    \begin{equation}
    \begin{split}
    z_0(t,x) & :=\eta(t,x)  \\
    z_n(t,x) & :=\int_{0}^{t}\int_{\R} G_{t-s}(x-y) b(z_{n-1}(s,y))dyds+\eta(t,x)  \\
    &   =\frac{1}{2}\int_{0}^{t}\int_{x-t+s}^{x+t-s} b(z_{n-1}(s,y))dyds+\eta(t,x), \quad n\geq 1. \\
    \end{split}
    \label{eq:12}
    \end{equation}
    Clearly, the above expressions of the Picard scheme are well-defined. Moreover, since $b$ is Lipschitz
    continuous, if $z_{n-1}$ is continuous then also $b\circ z_{n-1}$ is so. This gives by induction that $z_n$ is a continuous function.
    Moreover, we will show that $z_n$ converges uniformly on compact sets on $[0,T]\times \R$.
    More precisely, we prove that the sequence $\{z_n\}_{n\geq 0}$ is uniformly Cauchy on $[0,T]\times[-L,L]$,
    for every $L>0$. Indeed, for all $(t,x)\in [0,T]\times [-L,L]$, we have
    \begin{equation*}
    \begin{split}
    |z_{n+1}(t,x)-z_{n}(t,x)| & = \left|\frac{1}{2}\int_{0}^{t} \int_{x-t+s}^{x+t-s} [b(z_{n}(s,y))-b(z_{n-1}(s,y))] dy\,ds \right|\\
     & \leq C\int_{0}^{t} \int_{x-t+s}^{x+t-s} \Big|z_{n}(s,y)-z_{n-1}(s,y)\Big| dy\,ds.
    \end{split}
    \end{equation*}
    We can apply Lemma \ref{lemma: Gronwall wave} to the sequence of functions
    $f_n:=|z_{n+1}-z_{n}|$ and with $\lambda=0$ and $\mu=2C$, obtaining that
    \begin{equation*}
    \begin{split}
    |z_{n+1}(t,x)-z_{n}(t,x)| & \leq \Big(\sup_{(s,y)\in [0,T]\times[-L-T,L+T]}|z_1(s,y)-z_0(s,y)|\Big) \frac{(2C t^2)^n}{n!} \\
     & \leq \Big(\sup_{(s,y)\in [0,T]\times[-L-T,L+T]}|z_1(s,y)-z_0(s,y)|\Big)\frac{(2C
    L^2)^n}{n!}.
    \end{split}
    \end{equation*}
    Notice that the latter bound does not depend on $t$ and $x$. This remark,
    together with the fact that the function $z_1-z_0$ is bounded on any compact set, and
    that the sum
    $\sum_{k=0}^{\infty} \frac{(2C L^2)^n}{n!}$
    is convergent, yield that the sequence $\{z_n(t,x)\}_{n\geq 0}$ is uniformly Cauchy on
    $[0,T]\times [-L,L]$.
    Let $z(t,x)$ denote its limit. Then, by the uniqueness of the pointwise limit,
    the fact that $C([0,T]\times \R)$ is a complete metric space
    (with the underlying metric) and that $z_n$, $n\geq 0$, are continuous functions, we have that
     $z$ is also a continuous function in $C([0,T]\times \R)$.

    Letting $n\rightarrow \infty$ in \eqref{eq:12} and observing that $b\circ z_n\to b \circ z$
    uniformly on compact sets, one easily gets that $z$ solves equation \eqref{eq: integral equation}.

    The uniqueness of the solution comes from a simple remark:
    suppose we have two solutions $z_1,z_2$ relative to the same $\eta$.
    Then, for a fixed $L>0$ and for any $(t,x)\in [0,T]\times[-L,L]$, we have
    \begin{equation*}
    \begin{split}
    |z_1(t,x)-z_2(t,x)|  & \leq  \frac{1}{2}\int_{0}^{t}\int_{x-t+s}^{x+t-s} |b(z_1(s,y))-b(z_2(s,y))|dy\,ds \\
     & \leq  C\int_{0}^{t}\int_{x-t+s}^{x+t-s} |z_1(s,y)-z_2(s,y)|dy\,ds. \\
    \end{split}
    \end{equation*}
    It remains to apply Lemma \ref{lemma: Gronwall wave}
    to obtain the uniqueness for every $L>0$, and thus for the equation on the whole space.

    \medskip

    Let us now turn to the analysis of the solution operator
    $F:C([0,T]\times \R)\longrightarrow C([0,T]\times \R)$, which is defined by
    $F(\eta)(t,x):=z(t,x)$. We need to prove that this operator is continuous with respect to the metric of
    uniform convergence on compact sets. That is, we show the continuity of the restricted mapping
    $$F_L:C([0,T]\times \R)\longrightarrow C([0,T]\times \R),$$
    for every  $L>0$.

    We denote by  $||\cdot||_{\infty,L}$ the supremum norm on $C([0,T]\times [-L,L])$.
    Let $z_1:=F(\eta_1)$ and $z_2:=F(\eta_2)$ for some $\eta_1, \eta_2 \in C([0,T]\times \R)$.
    Then, for $(t,x)\in [0,T]\times[-L,L]$,
    \begin{equation*}
    \begin{split}
    |z_1(t,x)-  z_2(t,x)|  & \leq \int_{0}^{t}\int_{x-t+s}^{x+t-s} |b(z_1(s,y))-b(z_2(s,y))|dy\,ds +
    |\eta_1(t,x)-\eta_2(t,x)| \\
     & \leq  C \int_{0}^{t}\int_{x-t+s}^{x+t-s} |z_1(s,y)-z_2(s,y)|dy\,ds + ||\eta_1-\eta_2||_{\infty,L}.
    \end{split}
    \end{equation*}
    Here, we apply again Lemma \ref{lemma: Gronwall wave} to obtain that
     $$||z_1-z_2||_{\infty,L}\leq C ||\eta_1-\eta_2||_{\infty,L}.$$
\end{proof}


\subsection{Heat equation: $b$ bounded}
\label{subsec: heat b bounded}

In this section, we prove Theorem \ref{thm:9} for the stochastic heat equation \eqref{eq: heat}
in the particular case where the drift $b$ is assumed to be a bounded function.
This is necessary in order to construct a Picard iteration scheme to solve
equation \eqref{eq: integral equation},

Recall that the fundamental solution of the heat equation in
$[0,\infty)\times \R$ is given by
$$G_t(x)=\frac{1}{\sqrt{2\pi t}}e^{-\frac{|x|^2}{2t}}.$$

As we did in the previous subsection, first we establish an ad hoc
version of Gr\"onwall lemma.

\begin{lemma}
    \label{lemma: gronwall heat}
    Let $\{f_n\}_{n\geq 1}$, $f_n:[0,T]\times \R\to \R$, be a sequence of functions that satisfy,
    for every $(t,x)\in [0,T]\times \R$, the following inequality: for some $\mu,\lambda>0$,
    \begin{equation*}
    \begin{split}
    |f_{n+1}(t,x)-  f_n(t,x)|
     \leq  \mu\int_{0}^{t} \int_{\R} \frac{1}{\sqrt{2\pi (t-s)}} e^{-\frac{|x-y|^2}{2(t-s)}} |b(f_n(s,y))-b(f_{n-1}(s,y))|dy\,ds + \lambda,
    \end{split}
    \end{equation*}
    where $b:\R\to \R$ is bounded and Lipschitz continuous with Lipschitz constant $C$. Then, we have that, for any $n\geq 1$ and
    $(t,x)\in[0,T]\times \R$,
    \begin{equation*}
    \label{eq: estimate gronwall heat}
    |f_{n+1}(t,x)-f_{n}(t,x)|\leq  2||b||_\infty \frac{C^{n-1}(\mu t)^{n}}{n!}+\sum_{k=0}^{n-1}\frac{\lambda t^{k}}{k!}.
    \end{equation*}
    As a consequence, we also have that
    $$\limsup_{n\to \infty} \Big(\sup_{x\in\R} |f_{n+1}(t,x)-f_n(t,x)|\Big) \leq \lambda e^t.$$
\end{lemma}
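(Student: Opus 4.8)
The plan is to establish \eqref{eq: estimate gronwall heat} by induction on $n\ge 1$, following closely the scheme of the proof of Lemma \ref{lemma: Gronwall wave}; the only structural novelty is that two different features of $b$ enter at two different places. Throughout I will use the single elementary fact that the heat kernel has total mass one, i.e. $\int_\R \frac{1}{\sqrt{2\pi s}}\,e^{-|x-y|^2/(2s)}\,dy=1$ for every $s>0$ and $x\in\R$. This is what allows the spatial integral in the hypothesis to be discarded at each step, once the integrand has been bounded by a quantity that does not depend on $y$.

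For the base case $n=1$, the only control available on $|b(f_1(s,y))-b(f_0(s,y))|$ is the crude one coming from boundedness, namely $|b(f_1(s,y))-b(f_0(s,y))|\le 2\|b\|_\infty$. Inserting this into the hypothesis, integrating the kernel in $y$ and then the resulting constant over $s\in[0,t]$, gives $|f_2(t,x)-f_1(t,x)|\le 2\mu\|b\|_\infty\,t+\lambda$, which is exactly \eqref{eq: estimate gronwall heat} for $n=1$. For the inductive step, assuming the bound for $n$, I would first use the Lipschitz property to write $|b(f_{n+1}(s,y))-b(f_n(s,y))|\le C\,|f_{n+1}(s,y)-f_n(s,y)|$ and then invoke the induction hypothesis; since the resulting majorant depends only on $s$, the spatial integral of the kernel again contributes a factor $1$, leaving
$$|f_{n+2}(t,x)-f_{n+1}(t,x)|\le \mu C\int_0^t\Big(2\|b\|_\infty\frac{C^{n-1}(\mu s)^n}{n!}+\sum_{k=0}^{n-1}\frac{\lambda s^k}{k!}\Big)\,ds+\lambda .$$
Carrying out the elementary integrations $\int_0^t s^k\,ds=t^{k+1}/(k+1)$ turns the first summand into $2\|b\|_\infty\,C^{n}(\mu t)^{n+1}/(n+1)!$, and after shifting the index $k\mapsto k+1$ in the sum and collecting the extra $+\lambda$, the remaining terms reassemble — after the same kind of elementary majorization of multiplicative constants used in Lemma \ref{lemma: Gronwall wave} — into $\sum_{k=0}^{n}\lambda t^k/k!$. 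This proves \eqref{eq: estimate gronwall heat} for all $n\ge 1$.

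The asserted consequence is then immediate: the right-hand side of \eqref{eq: estimate gronwall heat} does not depend on $x$, so taking $\sup_{x\in\R}$ leaves it unchanged; the term $2\|b\|_\infty C^{n-1}(\mu t)^n/n!$ is the general term of a convergent series and hence tends to $0$, while $\sum_{k=0}^{n-1}\lambda t^k/k!\to\lambda e^t$, whence $\limsup_{n\to\infty}\big(\sup_{x\in\R}|f_{n+1}(t,x)-f_n(t,x)|\big)\le\lambda e^t$. I do not expect a genuine obstacle here — the content is entirely a careful accounting of iterated time-integrations — but the point that deserves attention, and where the heat case genuinely differs from the wave case, is the asymmetry in the two uses of $b$: the recursion in the hypothesis provides no control at all on $|f_1-f_0|$, so boundedness of $b$ is unavoidable to start the induction, whereas the Lipschitz property is what makes the recursion self-improving from $n=1$ onwards. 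Keeping these two roles straight, together with maintaining all spatial estimates uniformly in $x$ so that the mass-one property of the heat kernel can be applied cleanly, is the only delicate bookkeeping in the argument.
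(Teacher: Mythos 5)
Your argument is correct and is essentially the paper's own proof: induction on $n$, with the boundedness of $b$ and the unit mass of the heat kernel giving the base case, the Lipschitz property plus the induction hypothesis and a time integration giving the inductive step, and the $x$-uniform bound yielding the $\limsup$ statement. The only caveat --- inherited from, and identical to, the paper's own computation --- is that reassembling the $\lambda$-terms into $\sum_{k=0}^{n}\lambda t^{k}/k!$ silently discards the factor $\mu C$ (the cleanly provable bound carries $(\mu C t)^{k}$, hence $\lambda e^{\mu C t}$ in the limit, which is equally sufficient for the applications), so you have not introduced any gap beyond what is already in the paper.
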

\begin{proof}
    We prove it by induction. First, we compute
    \begin{equation*}
    \begin{split}
    |f_2(t,x)-f_1(t,x)|
     & \leq \mu\int_{0}^{t} \int_{\R} \frac{1}{\sqrt{2\pi (t-s)}} e^{-\frac{|x-y|^2}{2(t-s)}} |b(f_1(s,y))-b(f_{0}(s,y))|dy\,ds+\lambda \\
     &\leq  2\mu||b||_\infty \int_{0}^{t} \int_{\R} \frac{1}{\sqrt{2\pi (t-s)}} e^{-\frac{|x-y|^2}{2(t-s)}}dy\,ds +\lambda \\
     & \leq 2\mu||b||_\infty \int_{0}^{t} 1 ds +\lambda\\
     & = 2\mu t ||b||_\infty+\lambda.
    \end{split}
    \end{equation*}
    For the inductive step, we have to exploit the Lipschitz continuity of $b$:
    \begin{equation*}
    \begin{split}
    |f_{n+1}(t,x)-f_n(t,x)|
     & \leq\mu\int_{0}^{t} \int_{\R} \frac{1}{\sqrt{2\pi (t-s)}} e^{-\frac{|x-y|^2}{2(t-s)}} |b(f_n(s,y))-b(f_{n-1}(s,y))|dy\,ds+\lambda \\
     & \leq \mu C\int_{0}^{t} \int_{\R} \frac{1}{\sqrt{2\pi (t-s)}} e^{-\frac{|x-y|^2}{2(t-s)}} |f_n(s,y)-f_{n-1}(s,y)|dy\,ds+\lambda \\
     & \leq \mu C\int_{0}^{t} \int_{\R} \frac{1}{\sqrt{2\pi (t-s)}} e^{-\frac{|x-y|^2}{2(t-s)}}\Big[  2
     ||b||_\infty \frac{C^{n-2}(\mu s)^{n-1}}{(n-1)!}
     +\sum_{k=0}^{n-2}\frac{\lambda s^{k}}{k!} \Big]dy\,ds + \lambda\\
     & =\int_{0}^{t} \Big[2 ||b||_\infty  \frac{\mu^n C^{n-1}s^{n-1}}{(n-1)!}+ \sum_{k=0}^{n-2}\frac{\lambda s^k}{k!}\Big]dy\,ds +\lambda \\
     & =2 ||b||_\infty C^{n-1} \frac{(\mu t)^n}{n!}+\sum_{k=1}^{n-1}\frac{\lambda t^k}{k!}+\lambda.
     \end{split}
    \end{equation*}
    A direct consequence of this fact is that
    $$\limsup_{n\to \infty} |f_{n+1}(t,x)-f_n(t,x)| \leq \lambda e^t,$$
    which concludes the proof.
\end{proof}

\medskip

The proof of Theorem \ref{thm:9} in our standing case follows from the following result.

\begin{theorem}
    \label{th: solution deterministic eq heat equation}
    Let $\eta\in C([0,T]\times \R)$ and consider the deterministic equation
    \eqref{eq: integral equation} in the case where $G$ is the fundamental solution of the heat equation, and
    such that $b$ is Lipschitz and bounded.
    Then, \eqref{eq: integral equation} has a unique solution $z\in C([0,T]\times \R)$.
    Moreover, the solution operator
    $$F:C([0,T]\times \R)\to C([0,T]\times \R)$$
    defined by $F(\eta)=z$ is continuous, if we endow $C([0,T]\times \R)$
    with the metric of uniform convergence on compact sets.
\end{theorem}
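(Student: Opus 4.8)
The plan is to follow the same blueprint as the proof of Theorem \ref{thm:19}, replacing the finite‑propagation‑speed geometry of the wave kernel by the Gaussian decay of the heat kernel $G_t(x)=(2\pi t)^{-1/2}e^{-|x|^2/2t}$, and using the ad hoc Gr\"onwall Lemma \ref{lemma: gronwall heat} in place of Lemma \ref{lemma: Gronwall wave}. The one structural fact that keeps the non‑compact support of $G$ under control is that, since $b$ is bounded and $\int_0^t\int_\R G_{t-s}(x-y)\,dy\,ds=t$, the drift part of \eqref{eq: integral equation} is globally bounded on $[0,T]\times\R$ by $\|b\|_\infty T$, uniformly in everything in sight.

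\emph{Existence and uniqueness.} First I would set up the Picard scheme $z_0:=\eta$ and $z_n(t,x):=\eta(t,x)+\int_0^t\int_\R G_{t-s}(x-y)b(z_{n-1}(s,y))\,dy\,ds$ for $n\ge 1$. Since $b$ is bounded and continuous, an induction using the standard continuity properties of the heat semigroup (dominated convergence, with a dominating kernel that is integrable in $(s,y)$ because $\int_0^t\int_\R\sup_{|x'-x|\le 1}G_{t-s}(x'-y)\,dy\,ds<\infty$) shows $z_n\in C([0,T]\times\R)$ for every $n$. Applying Lemma \ref{lemma: gronwall heat} to $f_n:=z_n$ with $\mu=1$, $\lambda=0$ gives $\sup_{(t,x)\in[0,T]\times\R}|z_{n+1}(t,x)-z_n(t,x)|\le 2\|b\|_\infty C^{n-1}T^n/n!$, which is summable; hence $\{z_n\}$ converges uniformly on the whole strip to a limit $z\in C([0,T]\times\R)$, and passing to the limit in the recursion (legitimate because $b\circ z_n\to b\circ z$ uniformly by Lipschitz continuity and $\int_0^t\int_\R G_{t-s}(x-y)\,dy\,ds=t<\infty$) shows $z$ solves \eqref{eq: integral equation}; we then define $F(\eta):=z$. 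For uniqueness, if $z_1,z_2$ both solve with the same $\eta$, then $z_1-z_2=\int_0^\cdot\int_\R G_{\cdot-s}(\cdot-y)[b(z_1)-b(z_2)]\,dy\,ds$, so $g(t):=\sup_{x\in\R}|z_1(t,x)-z_2(t,x)|\le 2\|b\|_\infty T<\infty$ and $g(t)\le C\int_0^t g(s)\,ds$; classical Gr\"onwall (equivalently, Lemma \ref{lemma: gronwall heat} applied to the $2$‑periodic sequence $z_1,z_2,z_1,z_2,\dots$) forces $g\equiv 0$.

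\emph{Continuity of $F$ — the main obstacle.} In contrast with the wave case, the estimate for $F(\eta_1)-F(\eta_2)$ on $[0,T]\times[-L,L]$ cannot be closed using only $\|\eta_1-\eta_2\|_{\infty,[0,T]\times[-L,L]}$, since the heat kernel feels all of $\R$; moreover the naive Lipschitz bound is not small, because $z_1-z_2=(\eta_1-\eta_2)+$ a self‑referential drift remainder of size $2\|b\|_\infty T$. The remedy I propose is a Gr\"onwall iteration over an increasing family of spatial radii. Writing $z_i:=F(\eta_i)$, $\Psi^R(t):=\sup_{|x|\le R}|z_1(t,x)-z_2(t,x)|$, and $\delta(R):=\sup_{[0,T]\times[-R,R]}|\eta_1-\eta_2|$, and splitting the spatial integral in $z_1(t,x)-z_2(t,x)=\eta_1(t,x)-\eta_2(t,x)+\int_0^t\int_\R G_{t-s}(x-y)[b(z_1(s,y))-b(z_2(s,y))]\,dy\,ds$ at $|y|\le R'$ versus $|y|>R'$, one bounds the tail by $2\|b\|_\infty T\,\varrho(R'-R)$, where $\varrho(a):=\sup_{0<\sigma\le T}\int_{|z|>a}(2\pi\sigma)^{-1/2}e^{-z^2/2\sigma}\,dz\to 0$ as $a\to\infty$, and the central part, via the Lipschitz bound and $\int_\R G=1$, by $C\int_0^t\Psi^{R'}(s)\,ds$. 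Iterating $N$ times along $L=\rho_0<\rho_1<\dots<\rho_N$ and using $\Psi^{\rho}(s)\le 2\|b\|_\infty T+\delta(\rho)$ at the last step yields
\[
\sup_{t\in[0,T]}\Psi^{L}(t)\ \le\ \sum_{k=0}^{N-1}\frac{(CT)^k}{k!}\Bigl(\delta(\rho_k)+2\|b\|_\infty T\,\varrho(\rho_{k+1}-\rho_k)\Bigr)+\frac{(CT)^N}{N!}\bigl(2\|b\|_\infty T+\delta(\rho_N)\bigr).
\]
Given $\varepsilon>0$ I would first choose $N$ so that $\tfrac{(CT)^N}{N!}2\|b\|_\infty T<\varepsilon/4$; then choose the gaps $\rho_{k+1}-\rho_k$ large enough that the $\varrho$‑sum is $<\varepsilon/4$; finally, along a sequence $\eta_1=\eta^{(j)}\to\eta=\eta_2$ uniformly on compacts, the finitely many quantities $\delta(\rho_0),\dots,\delta(\rho_N)$ tend to $0$, so for $j$ large the remaining terms are $<\varepsilon/2$. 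Hence $F(\eta^{(j)})\to F(\eta)$ uniformly on $[0,T]\times[-L,L]$ for every $L$, which is precisely continuity of $F$ for the metric of uniform convergence on compact sets (the space being metrizable, sequential continuity suffices). I expect this last paragraph to be the technical heart of the proof; the existence/uniqueness part is essentially a rerun of the wave case with $G$ replaced by the heat kernel.
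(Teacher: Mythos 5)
Your proposal is correct, and its existence/uniqueness half is essentially the paper's own proof: the same Picard scheme, continuity of each $z_n$ by dominated convergence, Lemma \ref{lemma: gronwall heat} with $\lambda=0$ giving the summable bound $2\|b\|_\infty C^{n-1}T^n/n!$, hence uniform Cauchy on all of $[0,T]\times\R$, followed by passage to the limit and a Gr\"onwall uniqueness argument. (One small caveat there: your dominating function $\sup_{|x'-x|\le 1}G_{t-s}(x'-y)$ controls the spatial shift but not the varying time argument $t_m-s$; the paper sidesteps this by first changing variables $s'=t-s$, $y'=x-y$ so that the kernel $G_{s'}(y')$ is fixed and only the argument of $b\circ z_{n-1}$ moves — an easy repair, not a gap.) Where you genuinely diverge is the continuity of $F$. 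The paper disposes of it in one sentence ("as in the wave case, but applying Lemma \ref{lemma: gronwall heat}"), and a literal transcription of the wave argument — which relied on the compact support of the wave kernel — only yields $\sup_{[0,T]\times\R}|F\eta_1-F\eta_2|\le e^{CT}\sup_{[0,T]\times\R}|\eta_1-\eta_2|$, i.e.\ continuity for the global supremum norm rather than for the metric of uniform convergence on compact sets that the statement (and the continuous-mapping argument in Section \ref{sec:1}) actually requires. Your expanding-windows iteration — splitting the spatial integral at radii $\rho_0<\dots<\rho_N$, bounding the Gaussian tails by $2\|b\|_\infty T\,\varrho(\rho_{k+1}-\rho_k)$ with $\varrho(a)=\mathbb{P}(|N(0,T)|>a)\to 0$, closing the recursion with the a priori bound $|z_1-z_2|\le \delta(\rho_N)+2\|b\|_\infty T$, and then choosing $N$, the gaps, and finally $j$ — is a correct and complete way to get compact-open continuity; I checked the iterated inequality and the $\varepsilon$ bookkeeping. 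So your route buys a fully justified proof of exactly the topology of continuity claimed, at the price of a more elaborate Gr\"onwall argument, whereas the paper's shortcut leans on the boundedness of $b$ and leaves the passage from global-sup estimates to the compact-open metric implicit.
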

\begin{proof}
    As in the case of the wave equation, we consider the Picard iteration scheme
    \begin{equation*}
    \begin{split}
    z_0(t,x) & =\eta(t,x)  \\
    z_n(t,x) & =\int_{0}^{t}\int_{\R} G_{t-s}(x-y) b(z_{n-1}(s,y))dyds+\eta(t,x)  \\
    &  =\int_{0}^{t}\int_\R \frac{1}{\sqrt{2\pi (t-s)}}e^{-\frac{|x-y|^2}{2(t-s)}} b(z_{n-1}(s,y))dyds+\eta(t,x),
    \quad n\geq 1.
    \end{split}
    \end{equation*}
    We clearly have that $z_0$ is continuous. Assume that $z_{n-1}$ is continuous, and we check that
    $z_n$ is so. In fact, let $(t,x)\in [0,T]\times \R$ and pick a sequence $(t_m,x_m)\to (t,x)$
    as $m\to\infty$. Then,
    \begin{equation*}
    \begin{split}
    z_n(t_m,x_m) & = \int_{0}^{t_m } \int_{\R} G_{t_m-s}(x_m-y)b(z_n(s,y))dy\,ds+ \eta(t_m,x_m) \\
    & =  \int_{0}^{t_m}\int_{\R}G_{s'}(y')b(z_{n-1}(t_m-s',x_m-y'))dy'\,ds'+\eta(t_m,x_m) \\
    & =  \int_{0}^{\sup_m t_m}\int_{\R}   1_{[0,t_m]\times\R}(s',y')G_{s'}(y')b(z_{n-1}(t_m-s',x_m-y'))dy'\,ds'+\eta(t_m,x_m).
    \end{split}
    \end{equation*}
    Thanks to the continuity of $b$ and $z_{n-1}$, the latter integrand converges point-wise to
    $$1_{[0,t]\times \R}(s',y')G_{s'}(y')b(z_{n-1}(t-s',x-y')).$$
    Since $b$ is bounded and $G$ has finite integral over $[0,\sup_m t_m]\times \R$, we can apply the dominated
    convergence theorem to obtain that
    \begin{equation*}
    \begin{split}
    \lim_{m\rightarrow \infty} z_n(t_m,x_m) = z_n(t,x),
    \end{split}
    \end{equation*}
    so $z_n$ is continuous.

  For every $(t,x)\in[0,T]\times\R$, we can infer that
     \begin{equation*}
    \begin{split}
    |z_{n+1}(t,x) -z_{n}(t,x)|
      \leq \int_{0}^{t} \int_{\R} \frac{1}{\sqrt{2\pi (t-s)}} e^{-\frac{|x-y|^2}{2(t-s)}}
     |b(z_n(s,y))-b(z_{n-1}(s,y))|dy\,ds.
     \end{split}
    \end{equation*}
    By Lemma \ref{lemma: gronwall heat}, we get
    \[
     |z_{n+1}(t,x) -z_{n}(t,x)| \leq  2 ||b||_\infty \frac{C^{n-1}t^n}{n!}
      \leq 2 ||b||_\infty \frac{C^{n-1}T^n}{n!}.
    \]
    Since the rightmost term of this inequality is the general term of a converging series,
    and the series does not depend on $(t,x)$, we can infer that the sequence
    $\{z_n(t,x)\}_{n\geq 0}$ is uniformly Cauchy in $[0,T]\times \R$.
    This means that a limit $z$ exists and, since  $z_n\to z$ uniformly, $z\in C([0,T]\times\R)$.
    Moreover, it is straightforward to verify that $z$ is the solution to equation
    \eqref{eq: integral equation}.
 Finally, uniqueness of solution can be easily checked
      by applying again Lemma \ref{lemma: gronwall heat}.

    \medskip

    As far as the continuity of the solution operator $F:C([0,T]\times \R)\to C([0,T]\times \R)$
    is concerned, where
    $F(\eta)(t,x)=z(t,x)$, this property can be verified similarly to the case of the wave equation, but
    applying Lemma \ref{lemma: gronwall heat}.
\end{proof}


\subsection{Heat equation: $b$ general}
\label{subsec: b unbounded}

In this section, we aim to verify the validity of Theorem
\ref{thm:9} for the stochastic heat equation \eqref{eq: heat} in the
case of a general globally Lipschitz coefficient $b$. Recall that
the initial condition $u_0$ is assumed to satisfy Hypothesis C. In
particular, $u_0$ is $\alpha$-H\"older continuous for some
$\alpha>H_0$.

We will use a truncation argument on the drift $b$: for every $m\geq 1$, set
$$b_m(x):=\begin{cases}  b(x)\wedge m, & \text{if }b(x)\geq0, \\
b(x )\vee -m, & \text{if }b(x)<0.
\end{cases}$$
We have that $b_m$ is bounded and Lipschitz continuous, and converge
pointwise to $b$, as $m\rightarrow \infty$. Moreover, a unique
Lipschitz constant can be fixed for all functions $b_m$, $m\geq 1$,
and $b$. We define $u_m^{H_n}$ to be the solution of \eqref{eq: mild
formulation quasi-linear additive} where $b$ is replaced by $b_m$,
and corresponding to the Hurst index $H_n$. An immediate consequence
of Section \ref{subsec: heat b bounded} is that, for any $m\geq 1$,
\begin{equation}
u_m^{H_n}\xrightarrow[n\to\infty]{d} u_m^{H_0}
\label{eq:20}
\end{equation}
 on
$C([0,T]\times \R)$.

\medskip

Then, the proof of Theorem \ref{thm:9} is split in three steps.

\medskip

\textbf{Step $\mathbf{1}$:}
    First, we check that the family of laws of $\{u^{H_n}\}_{n\geq 1}$ is tight in
    $C([0,T]\times \R)$. For this, we will apply the
criterion stated in Theorem \ref{th: centsov}. We point out that,
indeed,
    the computations of this step are
    valid for both heat and wave equations.

    Notice that condition (i) of Theorem \ref{th: centsov} is clearly satisfied, since
    $u^{H_n}(0,0)$ is deterministic and does not depend on $n$. Regarding condition (ii),
    let $t,t'\in [0,T]$ and $x,x'\in \R$ with $t'\geq t$ and $x'\geq x$, and we can suppose that $|x-x'|<1$ and $|t-t'|<1$. We aim to estimate
    \begin{equation}
    \begin{split}
    \E{|u^{H_n}(t',x')  -u^{H_n}(t,x)|^p}
     & \leq  C_p\Big(  \E{|u^{H_n}(t',x')  -u^{H_n}(t,x')|^p}+\E{|u^{H_n}(t,x')  -u^{H_n}(t,x)|^p}    \Big) \\
     & =: C_p\Big(I+J\Big).
    \end{split}
    \label{eq:15}
    \end{equation}
We will see that
    \begin{equation}
    \label{eq: estimate I,J}
    I\leq C_1|t'-t|^{\beta_I p}, \qquad J\leq C_2|x'-x|^{\beta_J p},
    \end{equation}
    where $\beta_I,\beta_J>0$ are two positive constants.

    \medskip

    To start with, we have that
    \begin{equation*}
    \begin{split}
    I &  \leq C_p\Big( |I_0(t',x')-I_0(t,x')|^p \\
     & \quad + \mathrm{E}\Big[\Big| \int_{0}^{t'}\int_{\R} G_{t'-s}(x'-y) W^{H_n}(ds,dy)-\int_{0}^{t}\int_{\R} G_{t-s}(x'-y) W^{H_n}(ds,dy) \Big|^p\Big] \\
     & \quad + \mathrm{E}\Big[ \Big|  \int_{0}^{t'}\int_{\R} G_{t'-s}(x'-y) b(u^{H_n}(s,y)) dy\,ds
    - \int_{0}^{t}\int_{\R} G_{t-s}(x'-y) b(u^{H_n}(s,y)) dy\,ds\Big|^p \Big] \Big) \\
     & =: C_p( I_1+I_2+I_3).
    \end{split}
    \end{equation*}
    Regarding $I_1$, it is known from \cite{1/4<H<1/2}, Theorem 3.7,
    that, for a $\alpha$-H\"older continuous
    initial condition, it holds
    \begin{equation}
    I_1\leq C|t'-t|^{\frac{\alpha p}{2}}\leq C|t'-t|^{\frac{(\inf_ n H_n) p}{2}}.
    \label{eq:14}
    \end{equation}
Next, by step 1 in the proof of Theorem \ref{th: convergence linear
additive case}, we clearly obtain that
\begin{equation}
    I_2 \leq C |t'-t|^{\frac{H_n p}{2}} \leq  C |t'-t|^{\frac{(\inf_n
    H_n) p}{2}}.
    \label{eq:13}
\end{equation}
    It remains to estimate $I_3$.
    First, in the first summand of $I_3$ we perform the change of variables $s'=s-(t'-t)$, so that
    we obtain $I_3\leq C_p(I_{3,1}+I_{3,2})$, where
    $$I_{3,1}:=\E{\Big|\int_{-(t'-t)}^{0}\int_{\R} G_{t-s'}(x'-y) b(u^{H_n}(s'+(t'-t),y))ds'dy\Big|^p}$$
    and
    $$I_{3,2}:=\E{\Big|\int_{0}^{t}\int_{\R} G_{t-s}(x'-y) \Big(b(u^{H_n}(s+(t'-t),y))-b(u^{H_n}(s,y))\Big)dy\,ds\Big|^p}.$$
    Clearly, $I_{3,1}\leq C|t'-t|^{p}$ by  H\"older inequality,   Lemma \ref{lemma: uniform boundedness in L^p(Omega)} and the
    linear growth of $b$. For $I_{3,2}$, we have that
    \begin{equation*}
    \begin{split}
    I_{3,2} & = \E{\Big|\int_{0}^{t}\int_{\R} G_{t-s}(x'-y) \Big(b(u^{H_n}(s+(t'-t),y))-b(u^{H_n}(s,y))\Big)dy\,ds\Big|^p} \\
     & \leq C\,\E{\int_{0}^{t}\int_{\R} G_{t-s}(x'-y) \Big|u^{H_n}(s+(t'-t),y))-u^{H_n}(s,y)\Big|^p dy\,ds} \\
    & \leq  C\,\int_{0}^{t}\int_{\R} G_{t-s}(x'-y)  \Big(\sup_{n\geq 1} \ \sup_{y\in \R}\E{\Big|u^{H_n}(s+(t'-t),y))-u^{H_n}(s,y)\Big|^p}\Big)
    dy\,ds \\
     & =  C\,\int_{0}^{t}  \sup_{n\geq 1} \ \sup_{y\in
     \R}\E{\Big|u^{H_n}(s+(t'-t),y))-u^{H_n}(s,y)\Big|^p} ds.
    \end{split}
    \end{equation*}
    This latter estimate, together with \eqref{eq:14} and
    \eqref{eq:13} and the very definition of $I$, let us infer that
    \begin{equation*}
    \label{eq: I estimate before gronwall}
    \begin{split}
    & \sup_{n\geq 1} \ \sup_{x\in\R}
    \E{|u^{H_n}(t+(t'-t),x)-u^{H_n}(t,x)|^p} \\
    &  \qquad  \leq C_1 |t'-t|^{\beta_I p}
     + C_2 \int_{0}^{t}  \sup_{n\geq 1} \ \sup_{y\in \R}\E{\Big|u^{H_n}(s+(t'-t),y))-u^{H_n}(s,y)\Big|^p }
    ds,
    \end{split}
    \end{equation*}
    where the constants $C_1$ and $C_2$ do not depend on $H_n$ and
    $\beta_I=\frac 12 \inf_n H_n$. Hence, by Gr\"onwall lemma, we obtain the
    desired estimate for $I$ (see \eqref{eq: estimate I,J}).

\medskip

Let us now deal with the term $J$ in \eqref{eq:15}. Assume that
$x'=x+h$, for some $h>0$. We have
    \begin{equation}
    \begin{split}
    & \E{|u^{H_n}(t,x+h) - u^{H_n}(t,x)|^p} \leq C_p \Big( |I_0(t,x+h)-I_0(t,x)|^p \\
    & \qquad + \mathrm{E}\Big[\Big| \int_{0}^{t}\int_{\R} G_{t-s}(x+h-y) W^{H_n}(ds,dy)-\int_{0}^{t}\int_{\R} G_{t-s}(x-y) W^{H_n}(ds,dy) \Big|^p\Big] \\
    & \qquad + \mathrm{E}\Big[ \Big|  \int_{0}^{t}\int_{\R} G_{t-s}(x+h-y) b(u^{H_n}(s,y)) dy\,ds
     - \int_{0}^{t}\int_{\R} G_{t-s}(x-y)  b(u^{H_n}(s,y)) dy\,ds\Big|^p \Big] \Big) \\
     & \qquad =: J_1+J_2+J_3.
    \end{split}
    \end{equation}
    By \cite{1/4<H<1/2}, Theorem 3.7, and step 1 in the proof of Theorem \ref{th: convergence linear additive
    case}, we get, respectively,
    \begin{equation}
    J_1\leq  C \, h^{(\inf_n H_n)p} \qquad \text{and}\qquad J_2\leq  C \, h^{(\inf_n
    H_n)p}.
    \label{eq:17}
    \end{equation}
    In order to tackle the term $J_3$,
    we perform the change of variable $y'=y-h$ in its first summand,
    yielding
    \begin{equation*}
    \begin{split}
    J_3   = \mathrm{E}\Big[ \Big|  \int_{0}^{t}\int_{\R} G_{t-s}(x-y') b(u^{H_n}(s,y'+h)) dy'\,ds
     - \int_{0}^{t}\int_{\R} G_{t-s}(x-y) b(u^{H_n}(s,y)) dy\,ds\Big|^p
     \Big].
    \end{split}
    \end{equation*}
    Then, renaming the variable $y'$ as $y$, we have
    \begin{equation*}
    \begin{split}
    J_3   & = \mathrm{E}\Big[ \Big|  \int_{0}^{t}\int_{\R} \Big(b(u^{H_n}(s,y+h))- b(u^{H_n}(s,y))\Big)G_{t-s}(x-y) dy\,ds \Big|^p\Big] \\
     & \leq  C  \int_{0}^{t} \sup_{n\geq 1} \ \sup_{y\in \R} \E{\Big|u^{H_n}(s,y+h))- u^{H_n}(s,y))\Big|^p} ds. \\
    \end{split}
    \end{equation*}
    Putting together this bound and those of \eqref{eq:17}, we get
    \begin{equation*}
    \label{eq: J estimate before gronwall}
    \begin{split}
    & \sup_{n\geq 1}  \sup_{x\in\R}
    \E{|u^{H_n}(t,x+h)-u^{H_n}(t,x)|^p} \\
    & \qquad \qquad  \leq  C_1\, h^{\beta_J p }
    + C_2 \int_{0}^{t}  \sup_{n\geq 1} \ \sup_{y\in \R}\E{\Big|u^{H_n}(s,y+h))-u^{H_n}(s,y)\Big|^p }
    ds,
    \end{split}
    \end{equation*}
    where $\beta_J=\inf_n H_n$. By Gr\"onwall lemma,  we conclude
    that estimates \eqref{eq: estimate I,J} hold. Therefore, by Theorem \ref{th:
    centsov}, the family of laws of $\{u^{H_n}\}_{n\geq 1}$ is tight in
    $C([0,T]\times \R)$.

    \medskip

\textbf{Step $\mathbf{2}$:}
    This part of the proof is devoted to show the following uniform $L^2(\Omega)$-convergence:
        $$\sup_{H\in[a,b]}\sup_{(t,x)\in [0,T]\times\R}\E{|u_m^H(t,x)-u^H(t,x)|^2}\xrightarrow[m\to \infty]{}0.$$
    We remark that, indeed, the uniformity with respect to $(t,x)\in[0,T]\times \R$ will not
    be needed in step 3, but we obtain it for free thanks to our Gr\"onwall-type argument exhibited below.

    We argue as follows:
    \begin{equation}
    \label{eq: estimates}
    \begin{split}
    & \E{|u^H_m(t,x) -u^H(t,x)|^2}  \\
    & \quad \leq C \int_{0}^{t} \int_{\R} G_{t-s}(x-y) \E{|b_m(u^H_m(s,y))-b(u^H(s,y))|^2}dy\,ds  \\
    & \quad \leq  C \, \Big(\int_{0}^{t} \int_{\R} G_{t-s}(x-y) \E{|b_m(u^H_m(s,y))-b_m(u^H(s,y))|^2}dy\,ds  \\
    & \quad \quad  + \int_{0}^{t} \int_{\R} G_{t-s}(x-y) \E{|b_m(u^H(s,y))-b(u^H(s,y))|^2}dy\,ds \Big) \\
    & \quad \leq  C \, \Big( \int_{0}^{t} \int_{\R} G_{t-s}(x-y) \E{|u^H_m(s,y)-u^H(s,y)|^2}dy\,ds  \\
    & \quad \quad  + \int_{0}^{t} \int_{\R} G_{t-s}(x-y) \E{|b_m(u^H(s,y))-b(u^H(s,y))|^2
    1_{\{|u^H(s,y)|>m\}}}dy\,ds \Big). \\
    & \quad \leq  C \, \Big( \int_{0}^{t} \sup_{H\in[a,b]} \sup_{(s',y)\in [0,s]\times\R}\E{|u^H_m(s',y)-u^H(s',y)|^2} ds \\
    & \quad \quad  + \int_{0}^{t} \int_{\R} G_{t-s}(x-y)
    \E{|b_m(u^H(s,y))-b(u^H(s,y))|^4}^{\frac{1}{2}} \,
    \mathbb{P}(|u^H(s,y)>m|)^{\frac{1}{2}} \, dy\,ds \Big),
    \end{split}
    \end{equation}
    where in the progress we used the fact that $|b_m(u^H(s,y))-b(u^H(s,y))|=0$,
    whenever $|u^H(s,y)|\leq m$.

    A direct consequence of Lemma \ref{lemma: uniform boundedness in L^p(Omega)} is that $u^H$ is
    uniformly bounded in $L^p(\Omega)$, with respect to $H\in[a,b]$ and $(t,x)\in[0,T]\times \R$,
    for any $p\geq 2$, which means that  there exists a constant $M$ which depends only on $p$ and $T$
    such that
    \begin{equation}
    \label{eq: bound of the sup of L^p norms}
    \sup_{H\in[a,b]}\sup_{(t,x)\in [0,T]\times \R} \E{|u^H(t,x)|^p} \leq M.
    \end{equation}
    Hence, by Markov inequality,
    $$\mathbb{P}(|u^H(s,y)|>m)\leq \frac{\E{|u^H(s,y)|^2}}{m^2}\leq \frac{M}{m^2}.$$
    Note that the latter estimate is again uniform with respect to
    $H\in[a,b]$ and $(s,y)\in [0,T]\times \R$.
    Thus, going back to \eqref{eq: estimates} and using the linear growth of $b$ and
    \eqref{eq: bound of the sup of L^p norms}, we get
    \begin{equation}
    \begin{split}
    & \int_{0}^{t} \int_{\R} G_{t-s}(x-y) \E{|b_m(u^H(s,y))-b(u^H(s,y))|^4}^{\frac{1}{2}}
    \mathbb{P}(|u^H(s,y)>m|)^{\frac{1}{2}}dy\,ds \Big) \\
    & \quad \leq  \int_{0}^t \int_{\R} C \frac{M^{1/2}}{m} G_{t-s}(x-y)dy\,ds
    \leq  \int_{0}^t C \frac{M^{1/2}}{m} ds =:\frac{C}{m}.
    \end{split}
    \end{equation}
   We observe now that if on the left-hand side of \eqref{eq: estimates} we replace $t$ with any $t'\leq t$,
   the inequality would still hold exactly in the same way (indeed, the integrand on the right-hand
   side is positive, so it is increasing as a function of $t$). Therefore, we can infer that
    \begin{equation*}
    \begin{split}
    \sup_{H\in[a,b]}& \sup_{(t',x)\in [0,t]\times\R}    \E{|u^H_m(t',x) -u^H(t',x)|^2}  \\
     & \leq  \frac{C_1}{m}  + C_2 \int_{0}^{t} \sup_{H\in[a,b]}
     \sup_{(s',y)\in [0,s]\times\R}\E{|u^H_m(s',y)-u^H(s',y)|^2} ds.
    \end{split}
    \end{equation*}
    Then, Gr\"onwall lemma implies that
    $$\sup_{H\in[a,b]} \sup_{(t',x)\in [0,T]\times\R}     \E{|u^H_m(t',x) -u^H(t',x)|^2}
    \leq \frac{C}{m}\xrightarrow[m\to \infty]{} 0,$$
    which is what we wanted to show.

    \medskip

\textbf{Step $\mathbf{3}$:}
    We prove that the finite dimensional distributions of $u^{H_n}$ converge to those of $u^{H_0}$.
    Given a finite dimensional vector $\{(t_1,x_1),\dots,(t_k,x_k)\}$ and $f\in C_b(\R^k)$, we can write
    \begin{equation*}
    \begin{split}
    & \Big|\E{f\big(u^{H_n}(t_1,x_1), \dots,u^{H_n}(t_k,x_k)\big)-f\big(u^{H_0}(t_1,x_1),\dots,u^{H_0}(t_k,x_k)\big)}\Big| \\
     & \quad \leq  \Big| \E{f\big(u^{H_n}(t_1,x_1),\dots,u^{H_n}(t_k,x_k)\big)-
     f\big(u_m^{H_n}(t_1,x_1),\dots,u_m^{H_n}(t_k,x_k)\big)}  \Big| \\
     & \quad \quad +\Big| \E{f\big(u_m^{H_n}(t_1,x_1),\dots,u_m^{H_n}(t_k,x_k)\big)-
     f\big(u_m^{H_0}(t_1,x_1),\dots,u_m^{H_0}(t_k,x_k)\big)}  \Big| \\
    &  \quad \quad  +\Big| \E{f\big(u_m^{H_0}(t_1,x_1),\dots,u_m^{H_0}(t_k,x_k)\big)-
    f\big(u^{H_0}(t_1,x_1),\dots,u^{H_0}(t_k,x_k)\big)}  \Big| \\
     & \quad =: I_1(m,n)+I_2(m,n)+I_3(m).
    \end{split}
    \end{equation*}
    Assume that $f:\R^k\to\R$ is Lipschitz continuous with Lipschitz constant $L_f$
    (we can always restrict to the class of Lipschitz continuous functions to verify weak
    convergence). Then, for all $H\in [a,b]$,
    \begin{equation}
    \begin{split}
    & \sup_{H\in[a,b]}  \Big| \E{f(u^{H}(t_1,x_1),\dots,u^{H}(t_k,x_k))-f(u_m^{H}(t_1,x_1),\dots,u_m^{H}(t_k,x_k))}  \Big| \\
     & \quad \leq  \sup_{H\in[a,b]} \E{\Big|f(u^{H}(t_1,x_1),\dots,u^{H}(t_k,x_k))-f(u_m^{H}(t_1,x_1),\dots,u_m^{H}(t_k,x_k))\Big|}  \\
     & \quad \leq \sup_{H\in[a,b]}L_f \E{\Big(  \sum_{j=1}^{k} |u^H_m(t_j,x_j)-u^H(t_j,x_j)|^2  \Big)^{1/2}} \\
    & \quad \leq L_f \sup_{H\in[a,b]} \Big( \E{  \sum_{j=1}^{k} |u^H_m(t_j,x_j)-u^H(t_j,x_j)|^2 }\Big) ^{1/2} \\
    & \quad = L_f \sup_{H\in[a,b]} \Big( \sum_{j=1}^{k} \E{|u^H_m(t_j,x_j)-u^H(t_j,x_j)|^2 }\Big)^{1/2} \\
    & \quad \leq L_f k^{\frac12} \Big( \sup_{H\in[a,b]} \sup_{(t,x)\in [0,T]\times \R}
    \E{|u^H_m(t,x)-u^H(t,x)|^2 }\Big)^{1/2},
    \end{split}
    \end{equation}
    where the last term converges to 0 as $m\rightarrow \infty$ thanks to
    step 2, and taking into account that we are considering an arbitrary but fixed number of terms $k$.
    Hence, for any $\varepsilon>0$, there exists $m_0\geq 1$ such that, for all $m\geq m_0$, we have
    $$ \sup_{n\geq 1} \Big(I_1(m,n)+I_3(m)\Big) \leq \frac{\varepsilon}{2}.$$
    In particular, we have
    \begin{equation*}
    \begin{split}
    \Big|\E{f(u^{H_n}(t_1,x_1), \dots,u^{H_n}(t_k,x_k))-f(u^{H_0}(t_1,x_1),\dots,u^{H_0}(t_k,x_k))}\Big|
    \leq  I_2(m_0,n) +\frac{\varepsilon}{2}.
    \end{split}
    \end{equation*}
    Finally, it is sufficient to observe that the convergence
    \eqref{eq:20} implies the corresponding convergence of the finite dimensional distributions,
    and thus for some $n_0\geq 1$ we have that, for all $n\geq n_0$, it holds
    $I_2(m_0,n)<\frac{\varepsilon}{2}$.
    Therefore,
    $$\Big|\E{f(u^{H_n}(t_1,x_1), \dots,u^{H_n}(t_k,x_k))-f(u^{H_0}(t_1,x_1),\dots,u^{H_0}(t_k,x_k))}\Big|<\varepsilon,$$
    where $\varepsilon$ can be taken arbitrary small. This concludes the proof of Theorem \ref{thm:9}
    for the stochastic heat equation \eqref{eq: heat} in the case
    of a general Lipschitz continuous drift $b$.
    \qed


\section*{Acknowledgement}

Research supported by the grant MTM2015-67802P (Ministerio de Econom\'ia y Competitividad).


\end{document}